\newtheorem{thm}{Theorem}[section]
\newtheorem{lem}[thm]{Lemma}
\newtheorem{lem-dfn}[thm]{Lemma-Definition}
\newtheorem{prop}[thm]{Proposition}
\newtheorem{cor}[thm]{Corollary}
\theoremstyle{definition}
\newtheorem{defn}[thm]{Definition}
\newtheorem{exam}[thm]{Example}
\newtheorem{ex}[thm]{Example}
\newtheorem{quest}[thm]{Question}
\newtheorem*{acknowledgement}{Acknowledgement}
\theoremstyle{remark}
\newtheorem{rem}[thm]{Remark}
\numberwithin{equation}{section}
\newcommand{\thmref}[1]{Theorem~\ref{#1}}
\newcommand{\lemref}[1]{Lemma~\ref{#1}}
\newcommand{\corref}[1]{Corollary~\ref{#1}}
\newcommand{\proref}[1]{Proposition~\ref{#1}}
\newcommand{\remref}[1]{Remark~\ref{#1}}
\newcommand{\defref}[1]{Definition~\ref{#1}}
\newcommand{\exref}[1]{Example~\ref{#1}}
\newcommand{\figref}[1]{Figure~\ref{#1}}
\DeclareMathOperator{\Spec}{Spec}
\DeclareMathOperator{\spec}{Spec}
\DeclareMathOperator{\supp}{Supp}
\DeclareMathOperator{\Hom}{Hom}
\DeclareMathOperator{\Ker}{Ker}
\DeclareMathOperator{\HH}{H}
\DeclareMathOperator{\Ann}{Ann}
\DeclareMathOperator{\Soc}{Soc}
\DeclareMathOperator{\chara}{char}
\DeclareMathOperator{\ord}{ord}
\DeclareMathOperator{\pic}{Pic}
\DeclareMathOperator{\di}{div}
\DeclareMathOperator{\chr}{char}
\DeclareMathOperator{\nr}{nr}
\DeclareMathOperator{\br}{\bar r}
\newcommand{\m}{\mathfrak m}
\newcommand{\MM}{\mathfrak M}
\newcommand{\fra}{{\mathfrak a}}
\newcommand{\PP}{\mathbb P}
\newcommand{\Z}{\mathbb Z}
\newcommand{\C}{\mathbb C}
\newcommand{\bbZ}{\ensuremath{\mathbb Z}}
\newcommand{\cal}{\mathcal}
\newcommand{\cA}{\mathcal A}
\newcommand{\cO}{\mathcal O}
\renewcommand{\t}{\widetilde}
\newcommand{\ol}[1]{\overline {#1}}
\newcommand{\defset}[2]{{\left\{#1\,\left| \,#2 \right. \right\}}}
\newcommand{\X}{(X,o)}
\date{\today}
\begin{document}
\title{Gorensteinness for normal tangent cones of elliptic ideals}

\author{Tomohiro Okuma}
\address[Tomohiro Okuma]{Department of Mathematical Sciences, 
Yamagata University,  Yamagata, 990-8560, Japan.}
\email{okuma@sci.kj.yamagata-u.ac.jp}
\author{Kei-ichi Watanabe}
\address[Kei-ichi Watanabe]{Department of Mathematics, College of Humanities and Sciences, 
Nihon University, Setagaya-ku, Tokyo, 156-8550, Japan and 
Organization for the Strategic Coordination of Research and Intellectual Properties, Meiji University
}
\email{watnbkei@gmail.com}
\author{Ken-ichi Yoshida}
\address[Ken-ichi Yoshida]{Department of Mathematics, 
College of Humanities and Sciences, 
Nihon University, Setagaya-ku, Tokyo, 156-8550, Japan}
\email{yoshida.kennichi@nihon-u.ac.jp}
\thanks{TO was partially supported by JSPS Grant-in-Aid 
for Scientific Research (C) Grant Number 21K03215.
KW  was partially supported by JSPS Grant-in-Aid 
for Scientific Research (C) Grant Number 20K03522.
KY was partially supported by JSPS Grant-in-Aid 
for Scientific Research (C) Grant Number 19K03430.}
\keywords{normal tangent cone, Hilbert coefficients, elliptic singularity,  normal reduction number,  elliptic ideal, $p_g$-ideal}
\subjclass[2020]{13G05, 14J17,13H10, 14J27}

\begin{abstract} 
Let $A$ be a two-dimensional excellent normal Gorenstein local domain. 
In this paper, we characterize elliptic ideals $I \subset A$ 
for its normal tangent cone $\overline{G}(I)$ to be Gorenstein. 
Moreover, we classify all those ideals in a Gorenstein elliptic 
singularity in the characteristic zero case. 
\end{abstract}

\maketitle

\section*{Introduction}

Throughout this paper, let $(A,\m)$ be a Noetherian commutative local ring 
with the unique maximal ideal $\m$ and the residue field $k=A/\m$. 
Furthermore, we assume that $A$ is a two-dimensional excellent normal 
local domain (which is not regular) 
containing an algebraically closed field $k$ unless otherwise specified. 
Then there exists a resolution of singularities of $\Spec A$, and 
any $\m$-primary integrally closed ideal of $A$ admits a geometric 
representation. 
That is, for an $\m$-primary integrally closed ideal 
$I \subset A$, there exist a resolution of singularities 
$f \colon X \to \Spec A$ and an anti-nef cycle $Z$ on $X$ such that 
$I\mathcal{O}_X=\mathcal{O}_X(-Z)$ and $I=H^0(X,\mathcal{O}_X(-Z))$. 
Then we denote it by $I=I_Z$. 
This enables us to introduce the notion of `geometric' ideals and prove 
existence theorems of those ideals. 

\par\vspace{2mm}
For instance, in \cite{OWY1}, the authors introduced the notion of 
$p_g$-ideals.  
An integrally closed ideal $I$ is called a \textit{$p_g$-ideal} if  
$\dim_k H^1(X, \mathcal{O}_X(-Z))=p_g(A)$, where $p_g(A)=\dim_k H^1(X,\mathcal{O}_X)$ denotes the \textit{geometric genus} of $A$; 
see Section 2 for more details.  
Note that any integrally closed ideal is a $p_g$-ideal 
in a (two-dimensional) rational singularity (see \cite{Li}). 
Moreover, they proved that any local ring (resp. Gorenstein) 
$A$ has a $p_g$-ideal (resp. a good $p_g$-ideal) in \cite[Theorem 4.1]{OWY1}, 
where an ideal $I$ is good if and only if $I^2=QI$ and $I=Q\colon I$ for some minimal reduction.  

\par 
In \cite{OWY4}, the authors introduced the following two 
normal reduction numbers. 
For any $\m$-primary integrally closed ideal $I \subset A$ and 
its minimal reduction $Q$, we put  
\begin{eqnarray*}
\nr(I) &=& \min\{r \in \bbZ_{\ge 1} \,|\,  \overline{I^{r+1}}=Q \overline{I^r}\}, \\
\br(I)&=&\min\{r \in \bbZ_{\ge 1} \,|\, \overline{I^{n+1}}=Q \overline{I^n} 
\; \text{for all $n \ge r$}\}. 
\end{eqnarray*}
Then $I$ is a $p_g$-ideal if and only if $\br(I)=1$. 

\par \vspace{2mm}
The notion of $p_g$-ideal is 
analogous  to ideals in rational singularities 
in the   theory of 2-dimensional normal singularities. 
Analogous to elliptic singularities,  
E. Rossi and the authors  introduced the notions of 
\textit{elliptic ideals} and \textit{strongly elliptic ideals} in \cite{ORWY},
in terms of \textit{normal reduction numbers},
and discussed characterizations and an existence of those ideals. 
An ideal $I$ is an elliptic ideal if and only if 
$\br(I)=2$. 
Moreover, $I$ is a strongly elliptic ideal if and only if 
it is an elliptic ideal with $\ell_A(\overline{I^2}/QI)=1$, where 
$\ell_A(M)$ denotes the length of an $A$-module $M$.  
This notion characterizes a strongly elliptic singularity, 
that is, $p_g(A)=1$. Namely, the ring $A$ is a strongly elliptic singularity
if and only if any $\m$-primary integrally closed ideal is either a $p_g$-ideal 
or a strongly elliptic ideal. 
Similarly, the first-named author showed that 
any $\m$-primary integrally closed ideal $I \subset A$ 
in an elliptic singularity $A$ is either a $p_g$-ideal or an elliptic ideal (\cite{Ok}). 

\par \vspace{2mm}
For an ideal $I \subset A$, we put 
\[
G(I)=\bigoplus_{n=0}^{\infty} I^n/I^{n+1}, \qquad 
\overline{G}(I)=\bigoplus_{n=0}^{\infty} \overline{I^n}/\overline{I^{n+1}}. 
\] 
Then $G(I)$ (resp. $\overline{G}(I)$) 
is called the associated graded ring (resp. 
the normal tangent cone) of $I$. 
Many authors have studied the Cohen-Macaulayness and 
the Gorensteinness of $G(I)$. 
But we do not know many properties of $\overline{G}(I)$
although it seems to be important from geometric point of view. 
For instance, if $I$ is a $p_g$-ideal, then 
$\overline{G}(I)=G(I)$ is Cohen-Macaulay 
 (cf. \cite[Theorem 4.1]{OWY2}), 
and $\overline{G}(I)$ is Gorenstein if and only if it is good 
(see Proposition \ref{pgGor}).
Also, if $\overline{G}(I)$ is Cohen-Macaulay, then 
$\nr(I)=\br(I)$ holds true (see Lemma \ref{CMnrbr}). 
So it is natural to ask the following question. 

\par \vspace{2mm} \par \noindent 
{\bf Question.}
Let $I$ be an elliptic ideal. 
\begin{enumerate}
\item When is $\overline{G}(I)$ Cohen-Macaulay? 
\item When is $\overline{G}(I)$ Gorenstein? 
\end{enumerate}

\par \vspace{3mm}
The first question has a positive answer. 
Namely, if $I$ is an elliptic ideal, then $\overline{G}(I)$ is Cohen-Macaulay. 
One of the main purposes of this paper is to give an answer to 
the second question.

\par \vspace{2mm} \par \noindent 
{\bf Theorem (see Theorem $\ref{Main}$).}
Assume that $A$ is Gorenstein and $I=I_Z \subset A$ is 
an elliptic ideal. 
For any minimal reduction $Q$ of $I$,  
the following conditions are equivalent$:$
\begin{enumerate}
\item $\overline{G}(I)$ is Gorenstein. 
\item $Q \colon I=Q+\overline{I^2}$. 
\item $\ell_A(\overline{I^2}/QI)=\ell_A(A/I)$. 
\item $\overline{e}_2(I)=\ell_A(A/I)$, where  $\overline{e}_2(I)$ is 
the second normal Hilbert coefficient of $I$.  
\item 
$KZ=-Z^2$, that is, $\chi(Z)=0$, 
where $K$ denotes the canonical divisor on $X$. 
\end{enumerate}
When this is the case, $\ell_A(A/I) \le p_g(A)$.  
\par \vspace{2mm} 
In Section 2, we give a proof of the theorem above and 
several examples. 
In Section 3, we give a complete classification of elliptic ideals 
in a Gorenstein elliptic singularities which satisfies the theorem  in the characteristic zero case.

\section{Preliminaries}
 
Throughout this section, let $(A,\m)$ be an excellent  two-dimensional 
normal local domain  containing an algebraically closed field 
$k= A/\m$, where $\m$ denotes the unique maximal ideal of $A$. 
\par 
Let $I \subset A$ be an $\m$-primary ideal. 
An element $a \in A$ is said to be integral over $I$ if there exists a 
monic polynomial 
\[
f(X)= X^n +c_1X^{n-1}+\cdots + c_n \;\; (c_i \in I^i,\, i=1,2,\ldots,n)
\]
such that $f(a)=0$. 
The ideal of all elements that are integral over $I$ is called the integral closure of $I$, denoted by $\overline{I}$. 
We assume that $I$ is integrally closed (i.e. $\overline{I}=I$).  
The ideal $I$ is said to be normal if $\overline{I^n}=I^n$ for 
$n \in \Z_{\ge 1}$.  
A divisor on a resolution space whose support is contained in the exceptional set is called a {\em cycle}. 
Then in our situation, there exist an resolution of singularities $f \colon X \to \Spec A$ and an anti-nef cycle $Z$ on $X$ such that 
$I\mathcal{O}_X=\mathcal{O}_X(-Z)$ and $I=H^0(X,\mathcal{O}_X(-Z))$. 
Then we say that $I$ is represented by $Z$ on $X$, and denote it by 
$I=I_Z$. 
Note that if $Q$ is a minimal reduction of $I$ generated by $s,t\in I$, then we have a surjection $s\cO_X\oplus t\cO_X \to Q\cO_X=I\cO_X=\cO_X(-Z)$, and hence $\cO_X(-Z)$ is generated (cf. the proof of \cite[(6.2)]{Li}). 
Conversely, if $\cO_X(-Z)$ is generated, then the ideal $H^0(\cO_X(-Z))\subset A$ is represented by $Z$.

\par 
In what follows, let $f \colon X \to \Spec A$ be a resolution of 
singularities and $I=I_Z$ for some anti-nef cycle $Z$ on $X$.  
We recall the following definition of $q(I)$ as follows. 

\begin{defn}[\textrm{cf. \cite[Definition 2.1]{OWY4}}] \label{qkI}
For $I=I_Z$ and an integer $k \ge 0$  we define 
\[
q(kI) = \dim_k H^1(X,\mathcal{O}_X(-kZ)). 
\]
Then $q(kI)$ is a non-negative integer for every $k \ge 1$, and 
there exists a decreasing sequence 
\[
p_g(A) \ge q(I) \ge q(2I) \ge q(3I) \ge \cdots \ge 0,  
\]
where $p_g(A)=q(0I)=\dim_k H^1(X,\mathcal{O}_X)$ 
denotes the {\it geometric genus} of $A$ (see \cite[2.5]{OWY1}). 
Note that $p_g(A)$ is independent of the choice of $X$. 
We put $q(\infty I):=q(kI)$ for sufficiently large $k$. 
\end{defn}

It is known that the geometric genus can also be represented in terms of regular $2$-forms (cf. \cite[4.4]{karras}): 
\[
p_g(A)=\ell_A(K_A/H^0(\cO_X(K_X))),
\]
where $K_A \cong H^0(X\setminus E, \cO_X(K_X))$ is the canonical module of $A$.

\par
The following formula is called (Kato's) Riemann-Roch formula. 
The result was proved in \cite{kato} in the complex case, but it holds in any characteristic \cite{WY}. 

\begin{thm}[\textbf{Kato's Riemann Roch formula}] \label{RRformula}
Assume $I=I_Z$. 
Then we have 
\[
\ell_A(A/I)+q(I)=-\dfrac{Z^2+KZ}{2}+p_g(A),
\]
where $K=K_X$ denotes the canonical divisor on $X$. 
\end{thm}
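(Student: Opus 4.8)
The plan is to read both sides of the formula off the structure sequence of the cycle $Z$, and then to compute the resulting Euler characteristic by adjunction on the resolution. I would begin with the short exact sequence of $\mathcal{O}_X$-modules
\[
0 \longrightarrow \mathcal{O}_X(-Z) \longrightarrow \mathcal{O}_X \longrightarrow \mathcal{O}_Z \longrightarrow 0,
\]
in which $\mathcal{O}_Z = \mathcal{O}_X/\mathcal{O}_X(-Z)$ is the structure sheaf of the (in general non-reduced) cycle $Z$; its support is the exceptional set, a proper curve over $k$.

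First I would collect the cohomological inputs. Since the fibres of $f$ are one-dimensional, $H^2(X, \mathcal{O}_X(-Z)) = 0$ and, for $i \ge 1$, each $H^i(X, -)$ is a finite-length $A$-module supported at $\m$. Normality of $A$ gives $H^0(X, \mathcal{O}_X) = A$, the geometric representation $I = I_Z$ gives $H^0(X, \mathcal{O}_X(-Z)) = I$, and by definition $\dim_k H^1(X, \mathcal{O}_X) = p_g(A)$ and $\dim_k H^1(X, \mathcal{O}_X(-Z)) = q(I)$. Passing to the long exact cohomology sequence and splitting off the infinite-length part (the map $A \to H^0(X, \mathcal{O}_Z)$ has kernel $I$) yields the finite-length exact sequence
\[
0 \to A/I \to H^0(X, \mathcal{O}_Z) \to H^1(X, \mathcal{O}_X(-Z)) \to H^1(X, \mathcal{O}_X) \to H^1(X, \mathcal{O}_Z) \to 0.
\]
Taking the alternating sum of lengths gives
\[
\ell_A(A/I) + q(I) = \chi(\mathcal{O}_Z) + p_g(A),
\]
where $\chi(\mathcal{O}_Z) = \ell_A\bigl(H^0(X, \mathcal{O}_Z)\bigr) - \ell_A\bigl(H^1(X, \mathcal{O}_Z)\bigr)$.

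It then remains to establish $\chi(\mathcal{O}_Z) = -(Z^2 + KZ)/2$. I would prove this by induction on $\sum_i n_i$, where $Z = \sum_i n_i E_i$ with the $E_i$ the exceptional prime divisors, peeling off one component through the standard sequence
\[
0 \to \mathcal{O}_{E_i}\bigl(-(Z - E_i)\bigr) \to \mathcal{O}_Z \to \mathcal{O}_{Z - E_i} \to 0.
\]
Applying Riemann-Roch on the proper curve $E_i$, namely $\chi(\mathcal{O}_{E_i}(L)) = \deg L + \chi(\mathcal{O}_{E_i})$ with $\deg \mathcal{O}_{E_i}(-(Z-E_i)) = -(Z - E_i) \cdot E_i$, together with the adjunction identity $\chi(\mathcal{O}_{E_i}) = 1 - p_a(E_i) = -(E_i^2 + KE_i)/2$, produces the recursion
\[
\chi(\mathcal{O}_Z) = \chi(\mathcal{O}_{Z - E_i}) - (Z - E_i) \cdot E_i - \frac{1}{2}(E_i^2 + KE_i).
\]
Unwinding this gives the bilinear relation $\chi(\mathcal{O}_{D_1 + D_2}) = \chi(\mathcal{O}_{D_1}) + \chi(\mathcal{O}_{D_2}) - D_1 \cdot D_2$ for effective exceptional divisors, which combined with the one-component case yields the quadratic formula $\chi(\mathcal{O}_Z) = -(Z^2 + KZ)/2$. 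Substituting back completes the proof.

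The step I expect to be the main obstacle is precisely this last, intersection-theoretic identification in arbitrary characteristic. Over $\C$ one may appeal to transcendental methods, but here the argument must rest entirely on algebraic intersection theory on the resolution. The delicate point is that $X$ is proper only over $\Spec A$, not over $k$, so the Riemann-Roch theorem for surfaces is not available globally; one must instead keep every Euler-characteristic and intersection computation confined to the cycle $Z$, whose support is the exceptional fibre and is therefore genuinely complete over $k$. Granting the algebraic Riemann-Roch and adjunction for curves over $k$ (valid in any characteristic), the two halves combine to give exactly $\ell_A(A/I) + q(I) = -(Z^2 + KZ)/2 + p_g(A)$.
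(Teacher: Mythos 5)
Your proposal is correct, but note that the paper contains no proof of Theorem~\ref{RRformula} at all: it is quoted from Kato \cite{kato} in the complex case and from \cite{WY} in arbitrary characteristic. Your reconstruction — the structure sequence $0 \to \mathcal{O}_X(-Z) \to \mathcal{O}_X \to \mathcal{O}_Z \to 0$, the resulting finite-length five-term cohomology sequence (valid since $H^0(X,\mathcal{O}_X)=A$, $H^0(X,\mathcal{O}_X(-Z))=I$, and $H^2(X,\mathcal{O}_X(-Z))=0$ because the fibres of $f$ are at most one-dimensional), and the identity $\chi(\mathcal{O}_Z)=-(Z^2+KZ)/2$ proved by peeling off components via $0 \to \mathcal{O}_{E_i}(-(Z-E_i)) \to \mathcal{O}_Z \to \mathcal{O}_{Z-E_i} \to 0$ together with Riemann--Roch and adjunction on the proper, Gorenstein (local complete intersection) curves $E_i$ over the algebraically closed field $k$ — is exactly the standard characteristic-free argument underlying the cited references, and the paper itself records the key identity $\chi(D)=-(D^2+DK_X)/2$ at the start of Section~3, so your write-up correctly supplies the proof the paper omits.
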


\par \vspace{2mm}
Next, we recall the normal Hilbert coefficients and their geometric representation. 

\begin{prop}[\textrm{cf. \cite[Theorem 3.2]{OWY2}}]   \label{NorCoeff}
For $I = I_Z$,  let $\overline{P}_I(n)$ be the \textit{normal Hilbert-polynomial }of $I$$:$
\[
\overline{P}_I(n)=\bar e_0(I) {n+2 \choose 2} - \bar e_1(I)
{n+1 \choose 1} + \bar e_2(I).
\]
Then 
\begin{enumerate}
\item $\overline{P}_I(n)=\ell_A(A/\ol{I^{n+1}})$ for all $n \ge p_g(A)-1$. 
\vspace{1mm}
\item 
$\bar e_0(I)=e_0(I)=-Z^2$.
\vspace{1mm}
\item 
$\bar e_1(I)- \bar e_0(I) + \ell_A(A/I) =p_g(A) - q(I)$. 
\vspace{1mm}
\item $\bar e_2(I)= p_g(A)-q(nI)=p_g(A)-q(\infty I)$ for all $n \ge p_g(A)$. 
\end{enumerate}
\end{prop}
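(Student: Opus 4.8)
The plan is to feed every multiple $kZ$ of the representing cycle into Kato's Riemann--Roch formula (Theorem~\ref{RRformula}) and then to read the coefficients $\bar e_i(I)$ off the resulting closed form for the normal Hilbert function. First I would record that $\overline{I^{k}}=H^0(X,\mathcal{O}_X(-kZ))$ for every $k\ge 1$: since $I\mathcal{O}_X=\mathcal{O}_X(-Z)$ we have $I^k\mathcal{O}_X=\mathcal{O}_X(-kZ)$, and as $\mathcal{O}_X(-kZ)$ is invertible this gives $\overline{I^{k}}=H^0(X,I^k\mathcal{O}_X)=H^0(X,\mathcal{O}_X(-kZ))$. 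Thus $kZ$ represents the integrally closed ideal $\overline{I^{k}}$, and applying Theorem~\ref{RRformula} to $kZ$ yields
\[
\ell_A(A/\overline{I^{k}})=-\frac{(kZ)^2+K(kZ)}{2}+p_g(A)-q(kI)=-\frac{k^2Z^2+k\,KZ}{2}+p_g(A)-q(kI).
\]
Putting $k=n+1$ presents $\ell_A(A/\overline{I^{n+1}})$ as an explicit quadratic in $n$ corrected only by the term $-q((n+1)I)$.

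The term $-\tfrac{k^2Z^2+k\,KZ}{2}+p_g(A)$ is already polynomial in $k$, so $\ell_A(A/\overline{I^{n+1}})$ agrees with a single polynomial $\overline{P}_I(n)$ precisely on the range where $q((n+1)I)$ is constant, equal to $q(\infty I)$. By Definition~\ref{qkI} the sequence $p_g(A)=q(0I)\ge q(I)\ge\cdots\ge 0$ is non-increasing, so what remains for $(1)$ and $(4)$ is to show it stabilizes no later than index $p_g(A)$, i.e.\ $q(kI)=q(\infty I)$ for all $k\ge p_g(A)$. This is the one substantial input and, I expect, the main obstacle. I would base it on the restriction sequence
\[
0\to\mathcal{O}_X(-(k+1)Z)\to\mathcal{O}_X(-kZ)\to\mathcal{O}_X(-kZ)\otimes\mathcal{O}_Z\to 0,
\]
whose cohomology links $q((k+1)I)$ and $q(kI)$ through the terms $H^\bullet(\mathcal{O}_X(-kZ)\otimes\mathcal{O}_Z)$; tracking these maps as $k$ increases yields the monotone-stabilization property that once $q(kI)=q((k+1)I)$ the values stay constant thereafter. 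Granting this, the strict drops form an initial segment of indices, and since the total decrease from $q(0I)=p_g(A)$ to $q(\infty I)$ equals $p_g(A)-q(\infty I)$ and happens in steps of size at least $1$, the stable value is reached by index $p_g(A)-q(\infty I)\le p_g(A)$. This proves the stabilization, hence $(1)$ on the range $n\ge p_g(A)-1$.

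The coefficients then follow by expansion. Comparing
\[
\overline{P}_I(n)=-\frac{(n+1)^2Z^2+(n+1)KZ}{2}+p_g(A)-q(\infty I)
\]
with $\bar e_0(I)\binom{n+2}{2}-\bar e_1(I)\binom{n+1}{1}+\bar e_2(I)$, the leading terms give $\bar e_0(I)=-Z^2$; together with the invariance of multiplicity under integral closure ($\bar e_0(I)=e_0(I)$) and the classical formula $e_0(I)=-Z^2$ this is $(2)$. The constant terms give $\bar e_2(I)=p_g(A)-q(\infty I)$, which with the stabilization above is $(4)$. For $(3)$ I would read $\bar e_1(I)=\tfrac{1}{2}(KZ-Z^2)$ off the linear term and compare with the case $k=1$ of the length formula, $\ell_A(A/I)=-\tfrac{1}{2}(Z^2+KZ)+p_g(A)-q(I)$; substituting these into $\bar e_1(I)-\bar e_0(I)+\ell_A(A/I)$ and cancelling leaves $p_g(A)-q(I)$, as required.
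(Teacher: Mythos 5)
Your overall route is the standard one and is essentially the proof behind the cited source: the paper states this proposition with a reference to \cite[Theorem 3.2]{OWY2} and gives no proof of its own, and the argument there is exactly what you propose --- note that $kZ$ represents $\overline{I^k}$, push $kZ$ through Kato's Riemann--Roch formula (Theorem~\ref{RRformula}), and read off $\bar e_0(I)=-Z^2$, $\bar e_1(I)=\tfrac{1}{2}(KZ-Z^2)$, $\bar e_2(I)=p_g(A)-q(\infty I)$ once $q(kI)$ has stabilized. Your coefficient extraction and the deduction of (2)--(4) from it are correct, and you correctly isolate the one substantial input: that the non-increasing sequence $q(kI)$ stabilizes no later than index $p_g(A)$, equivalently that once two consecutive values agree the sequence is constant afterwards.

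That stabilization step is where your proposal has a genuine gap: the restriction sequence $0\to\mathcal{O}_X(-(k+1)Z)\to\mathcal{O}_X(-kZ)\to\mathcal{O}_Z(-kZ)\to 0$ alone does not deliver it. Its long exact sequence (using $H^2=0$ on $X$) gives $q(kI)-q((k+1)I)=h^1(\mathcal{O}_Z(-kZ))-\dim\operatorname{coker}\bigl(H^0(X,\mathcal{O}_X(-kZ))\to H^0(\mathcal{O}_Z(-kZ))\bigr)$, and neither term on the right is monotone in $k$ in any obvious way, so ``tracking these maps'' has no evident endgame; the statement that a single equality $q(kI)=q((k+1)I)$ propagates forever is a nontrivial convexity assertion whose known proofs rest on Itoh--Huneke type results (e.g.\ $\overline{I^{n+1}}\cap Q=Q\overline{I^n}$), not on the restriction sequence alone. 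Fortunately the needed fact is already recorded in this paper as Proposition~\ref{q(nI)formula}(1): $2\,q(nI)+\ell_A(\overline{I^{n+1}}/Q\overline{I^n})=q((n+1)I)+q((n-1)I)$, which says precisely that $k\mapsto q(kI)$ is convex, hence that the successive differences are non-increasing and the strict drops form an initial segment, giving stabilization by index $p_g(A)-q(\infty I)\le p_g(A)$ exactly as in your counting argument. If you replace your cohomological sketch by a citation of that proposition, the rest of your write-up goes through verbatim.
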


\par \vspace{2mm}
For any ideal $I$, let $Q$ be its minimal reduction. 
Then we put 
\[
r_Q(I)=\min\{r \in \Z_{\ge 1}\,|\, I^{r+1}=QI^r\}. 
\]
In general, $r_Q(I)$ depends on the choice of $Q$. So
we define 
\[
r(I)=\min\{r_Q(I) \,|\, \text{$Q$ is a minimal reduction of $I$}\}. 
\]
Then $r(I)$ is called a \textit{reduction number} of $I$.  
We now recall the definition of two normal reduction numbers. 

\begin{defn}[\textrm{cf. \cite[Definition 2.5]{OWY4}}] \label{NorRedNum}
For $I=I_Z$ and its minimal reduction $Q$, 
we put
\begin{eqnarray*}
\nr(I)&=& \min\{r \in \mathbb{Z}_{\ge 1}  \,| \,  \overline{I^{r+1}}=Q\overline{I^r} \},\\
\br(I) &=& \min\{r \in \mathbb{Z}_{\ge 1}  \,| \,  \overline{I^{n+1}}=Q\overline{I^n} \; \text{for all $n \ge r$}\}.
\end{eqnarray*}

\par 
A positive integer $\nr(I)$ (resp. $\br(I)$) is called 
the {\it relative normal reduction number} 
(resp. the {\it normal reduction number}) 
\end{defn}

\begin{rem}[\textrm{cf. \cite[Lemma 4.1]{Hun}}]
Let $Q,Q'$ be minimal reductions of $I$. 
Then $\overline{I^{n+1}}=Q\overline{I^n}$ 
if and only if  $\overline{I^{n+1}}=Q'\overline{I^n}$.  
\end{rem}

\par \vspace{2mm}
Two normal reduction numbers can be calculated in terms of $\{q(kI)\}$. 

\begin{prop}[\textrm{cf. \cite[Proposition 2.2]{OWY5}}] \label{q(nI)formula}
For $I=I_Z$, the following statements hold. 
\begin{enumerate}
\item 
For any integer $n \ge 1$, we have 
\[
2 \cdot q(nI) + \ell_A(\overline{I^{n+1}}/Q\overline{I^n})
=q((n+1)I)+q((n-1)I).  
\]
 \item We have
\begin{eqnarray*}
\nr(I) &=& \min\{n \in \bbZ_{\ge 1} \,| \, 
   q((n-1)I)- q(nI) = q(nI) - q((n+1)I)  \},\\
\br(I) &=& \min\{n \in \bbZ_{\ge 1} \,|\, q((n-1)I)=q(nI)  \}.
\end{eqnarray*}
\end{enumerate}
\end{prop}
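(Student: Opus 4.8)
The plan is to derive both parts from a single Koszul-type short exact sequence on the resolution $X$, built from a two-generated minimal reduction $Q=(a,b)$ of $I$. Since $A$ is a two-dimensional normal (hence Cohen--Macaulay) local domain, $a,b$ form a regular sequence, and because $Q$ is a reduction of $I$ one has $Q\mathcal{O}_X=I\mathcal{O}_X=\mathcal{O}_X(-Z)$: indeed $I^{m+1}=QI^m$ for $m\gg0$ gives $\mathcal{O}_X(-(m+1)Z)=(Q\mathcal{O}_X)\cdot\mathcal{O}_X(-mZ)$, and cancelling the invertible sheaf $\mathcal{O}_X(-mZ)$ yields the claim. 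Consequently the global sections $s_a,s_b\in H^0(X,\mathcal{O}_X(-Z))$ corresponding to $a,b$ have no common zero on $X$, so the Koszul complex of the pair $(s_a,s_b)$, twisted by $\mathcal{O}_X(-(n-1)Z)$, is exact:
\[
0\to \mathcal{O}_X(-(n-1)Z)\xrightarrow{\binom{-s_b}{\,s_a}} \mathcal{O}_X(-nZ)^{\oplus 2}\xrightarrow{(s_a,\,s_b)} \mathcal{O}_X(-(n+1)Z)\to 0.
\]

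For Part (1), I would take the long exact cohomology sequence of this sequence. Because $f\colon X\to\Spec A$ is a resolution with one-dimensional fibres over an affine base, $R^2f_*=0$ and hence $H^2(X,\mathcal{O}_X(-mZ))=0$ for all $m$. Identifying $H^0(X,\mathcal{O}_X(-mZ))=\overline{I^m}$ and noting that the image of $H^0(X,\mathcal{O}_X(-nZ))^{\oplus 2}\to H^0(X,\mathcal{O}_X(-(n+1)Z))$ is $a\overline{I^n}+b\overline{I^n}=Q\overline{I^n}$, the tail of the long exact sequence collapses to the four-term exact sequence
\[
0\to \overline{I^{n+1}}/Q\overline{I^n}\to H^1(X,\mathcal{O}_X(-(n-1)Z))\to H^1(X,\mathcal{O}_X(-nZ))^{\oplus 2}\to H^1(X,\mathcal{O}_X(-(n+1)Z))\to 0,
\]
all of whose terms have finite length. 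Taking the alternating sum of lengths and using $\dim_kH^1(X,\mathcal{O}_X(-mZ))=q(mI)$ gives exactly $\ell_A(\overline{I^{n+1}}/Q\overline{I^n})-q((n-1)I)+2q(nI)-q((n+1)I)=0$, which is the asserted identity.

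For Part (2), I would set $d_n:=q((n-1)I)-q(nI)$. The decreasing chain $p_g(A)=q(0I)\ge q(I)\ge\cdots\ge 0$ gives $d_n\ge 0$, and Part (1) rewrites as $\ell_A(\overline{I^{n+1}}/Q\overline{I^n})=d_n-d_{n+1}$. Since lengths are nonnegative, the sequence $(d_n)$ is itself nonincreasing; and $\sum_{n\ge1}d_n=p_g(A)-q(\infty I)$ is finite, so $d_n\to 0$. Now $\overline{I^{n+1}}=Q\overline{I^n}$ is equivalent to $d_n=d_{n+1}$, i.e. to $q((n-1)I)-q(nI)=q(nI)-q((n+1)I)$, which yields the formula for $\nr(I)$ upon taking the minimal such $n$. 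For $\br(I)$ I would observe that $\overline{I^{m+1}}=Q\overline{I^m}$ for all $m\ge r$ means $d_m=d_{m+1}$ for all $m\ge r$, i.e. $(d_m)_{m\ge r}$ is constant; as $d_m\to 0$ this constant is $0$, so the condition reads $d_m=0$ for all $m\ge r$. Because $(d_n)$ is nonincreasing, $d_r=0$ already forces $d_m=0$ for every $m\ge r$, whence $\br(I)=\min\{n : d_n=0\}=\min\{n : q((n-1)I)=q(nI)\}$.

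The only genuinely delicate points I anticipate are geometric rather than combinatorial: verifying that $s_a,s_b$ generate $\mathcal{O}_X(-Z)$ (so that the Koszul complex is exact) and the vanishing $H^2(X,\mathcal{O}_X(-mZ))=0$. Once the displayed short exact sequence is in hand, both parts follow by pure bookkeeping, the key structural input being that the nonnegativity of lengths forces $(d_n)$ to be nonincreasing; this is exactly what reconciles the single-step conditions defining $\nr(I)$ and $\br(I)$ with the stated closed forms.
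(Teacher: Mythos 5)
Your proof is correct and is essentially the argument behind the paper's citation to \cite[Proposition 2.2]{OWY5}: the Koszul sequence $0\to \mathcal{O}_X(-(n-1)Z)\to \mathcal{O}_X(-nZ)^{\oplus 2}\to \mathcal{O}_X(-(n+1)Z)\to 0$ built from a minimal reduction $Q=(a,b)$, the vanishing of $H^2$, and the resulting four-term exact sequence ending in $\overline{I^{n+1}}/Q\overline{I^n}$ are exactly the tools used there, with part (2) then following by the same monotonicity bookkeeping on $d_n=q((n-1)I)-q(nI)$. Your verifications of the delicate points (that $Q\mathcal{O}_X=\mathcal{O}_X(-Z)$, hence $s_a,s_b$ have no common zero, and that $H^2(X,\mathcal{O}_X(-mZ))=0$ over the affine base) are sound and match the standard setup of the OWY papers.
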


\par \vspace{2mm}
For instance, if $\br(I)=2$, then $q(I)=q(2I)$. 
Hence we have 
\[
\ell_A(\overline{I^2}/QI)=p_g(A)-q(I)
\]
for any minimal reduction $Q$ of $I$. 

\begin{cor} \label{pgbr}
For $I=I_Z$, the following statements hold. 
\begin{enumerate}
\item $\br(I) \le p_g(A)+1$. 
\item If equality holds in $(1)$, then $\nr(I)=1$. 
\end{enumerate}
\end{cor}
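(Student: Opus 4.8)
The plan is to translate both assertions into statements about the non-increasing sequence of non-negative integers
\[
p_g(A)=q(0I)\ge q(I)\ge q(2I)\ge\cdots\ge 0
\]
furnished by \defref{qkI}, and then to read off $\br(I)$ and $\nr(I)$ from the two reformulations in \proref{q(nI)formula}(2). Recall from there that $\br(I)$ is the least $n\ge 1$ with $q((n-1)I)=q(nI)$, and that $\nr(I)$ is the least $n\ge 1$ with $q((n-1)I)-q(nI)=q(nI)-q((n+1)I)$. I will also use the convexity built into \proref{q(nI)formula}(1): since $\ell_A(\overline{I^{n+1}}/Q\overline{I^n})=q((n+1)I)+q((n-1)I)-2q(nI)\ge 0$, the consecutive gaps $q((n-1)I)-q(nI)$ are themselves non-increasing, so the sequence strictly decreases up to the index $\br(I)-1$ and then is constant.

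For part $(1)$, put $r=\br(I)$. By the description above, for each $n<r$ the step is a strict drop $q((n-1)I)>q(nI)$, while $q((r-1)I)=q(rI)=\cdots$. Thus between $q(0I)$ and $q((r-1)I)$ there are $r-1$ strict drops, each at least $1$, among non-negative integers starting at $p_g(A)$, giving
\[
p_g(A)=q(0I)\ge q\bigl((r-1)I\bigr)+(r-1)\ge r-1,
\]
so $\br(I)=r\le p_g(A)+1$. For part $(2)$, I would assume equality $r=p_g(A)+1$ and push the chain of inequalities above to equality: this forces $q((r-1)I)=0$ and each of the $r-1=p_g(A)$ strict drops to be exactly $1$, i.e. $q(nI)=p_g(A)-n$ for $0\le n\le p_g(A)$. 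Feeding this into the second-difference criterion of \proref{q(nI)formula}(2) at $n=1$ gives $q(0I)-q(I)=1=q(I)-q(2I)$, whence $\nr(I)=1$.

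The main obstacle is the passage in part $(2)$ from the equality $\br(I)=p_g(A)+1$ to the conclusion $\nr(I)=1$: the argument hinges on pinning the sequence down to the exact arithmetic progression $q(nI)=p_g(A)-n$ and then exhibiting two equal consecutive first-differences $q(0I)-q(I)=q(I)-q(2I)$. The delicate point is that reading off this $n=1$ condition requires $q(2I)$ to still lie in the strictly decreasing range, i.e. $q(2I)=p_g(A)-2\ge 0$, which is automatic once $p_g(A)\ge 2$; I would therefore treat $p_g(A)\le 1$ directly as separate, easily dispatched cases so that the general mechanism is not obscured by these boundary values.
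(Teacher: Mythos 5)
Your part (1), and your part (2) in the range $p_g(A)\ge 2$, are correct and follow the paper's own route: both arguments read $\br(I)$ and $\nr(I)$ off the non-increasing sequence $p_g(A)=q(0I)\ge q(I)\ge q(2I)\ge\cdots\ge 0$ via \proref{q(nI)formula}, observe that $\br(I)=p_g(A)+1$ forces $q(kI)=p_g(A)-k$ for $0\le k\le p_g(A)$, and then evaluate the second difference at $n=1$ to conclude $\ell_A(\overline{I^2}/QI)=0$, i.e.\ $\nr(I)=1$.

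The genuine gap is precisely the case you set aside. For $p_g(A)=1$ the boundary case is not ``easily dispatched''; the step fails, and in fact so does the statement. If $p_g(A)=1$ and $\br(I)=2=p_g(A)+1$, then $q(I)=q(2I)=0$, and \proref{q(nI)formula}(1) at $n=1$ gives $\ell_A(\overline{I^2}/QI)=q(2I)+q(0I)-2q(I)=0+1-0=1$, so $\overline{I^2}\ne QI$ and $\nr(I)=2$, not $1$. Such ideals exist: by \defref{def:SEideal}, every strongly elliptic ideal in a singularity with $p_g(A)=1$ has exactly these invariants; concretely, $I=\m$ for $A=\C[x,y,z]_{(x,y,z)}/(x^3+y^3+z^3)$ (cf.\ \exref{ex:Brieskorn}(2)(a)) satisfies $\br(\m)=2=p_g(A)+1$ and $\nr(\m)=2$. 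Your worry that $q(2I)$ must ``still lie in the strictly decreasing range'' was exactly the right one; the correct resolution is not to dispatch the small cases but to recognize that assertion (2) requires $p_g(A)\ne 1$ (the case $p_g(A)=0$ is indeed trivial, since then $\br(I)=\nr(I)=1$). Note that the paper's own proof carries the same hidden assumption: it substitutes $q(2I)=g-2$ into the $n=1$ identity, which presupposes $g\ge 2$.
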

\begin{proof}
(1) Put $g=p_g(A)$. Then 
\[
g=p_g(A) = q(0I) \ge q(I) \ge q(2I) \ge \cdots \ge q(gI) \ge q((g+1)I)
\]
is a decreasing sequence of non-negative integers of length $g+2$. 
Hence there exists an integer $n$ with $1 \le n \le g+2$ such that 
$q((n-1)I)=q(nI)$. 
Hence $\br(I) \le n \le g+1$. 
\par \vspace{2mm}
(2) Suppose $\br(I)=g+1$. 
Then 
\[
g=p_g(A) = q(0I) > q(I) > q(2I) > \cdots > q(gI) = q((g+1)I) \ge 0
\]
yields $q(kI) = g-k$ for each $k=1,2,\ldots,g$. 
\par
If we substitute $n=1$ in (1) in \proref{q(nI)formula}, 
then we have $\ell_A(\overline{I^2}/QI)=0$. 
Hence $\nr(I)=1$. 
\end{proof}

\par 
Note that there exists an example of $A$ and an $\m$-primary 
integrally closed ideal $I$ which satisfies 
$\nr(I)=1 < \br(I)=p_g(A)+1$; 
see \cite[Example 3.9 and 3.10]{OWY5} and 
Example \ref{nr(I)<r(I)}. 

\par \vspace{2mm}
Now we recall the definition of normal tangent cones 
as the main target in this paper. 

\begin{defn} \label{def:cones}
Let $I=I_Z \subset A$ be an $\m$-primary integrally closed ideal. 
Then
\begin{enumerate}
\item  $G(I) = \oplus_{n \ge 0} I^n/I^{n+1}$ 
is called the {\it associated graded ring} or the {\it tangent cone} of $I$. 
\item $\overline{G}(I) = \oplus_{n \ge 0} \overline{I^n}/\overline{I^{n+1}}$ is called 
the {\it normal tangent cone} of $I$. 
\end{enumerate}
\end{defn}

\par 
In this paper, we want to investigate some ring-theoretic 
properties of normal tangent cones of integrally closed ideals.  
We first consider the case of $p_g$-ideals, 
which was introduced by the 
authors in \cite{OWY1}.  

\begin{defn}[\textrm{cf. \cite{OWY1}}] \label{def:pgideal}
Assume that $I = I_Z$. 
Then $I$ is called the \textit{$p_g$-ideal} if one of the following equivalent conditions is satisfied: 
\begin{enumerate}
\item $q(I)=p_g(A)$. 
\item $\bar e_1(I)-\bar e_0(I)+\ell_A(A/I)=0$. 
\item $\bar e_2(I)=0$. 
\item $\br(I)=1$. 
\item $I$ is normal and $I^2=QI$ for some 
minimal reduction $Q$ of $I$. 
\end{enumerate}
\end{defn}

\par \vspace{2mm}
Then the following results are known. 

\begin{prop}[\textrm{cf. \cite{ORWY}}] \label{pgCM}
If $I$ is a $p_g$-ideal, then $\overline{G}(I)=G(I)$ is Cohen-Macaulay. 
\end{prop}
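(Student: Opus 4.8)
The plan is to extract the two classical features of a $p_g$-ideal recorded in \defref{def:pgideal}, namely that $I$ is normal and that $I^2=QI$ for some minimal reduction $Q$, and then to feed the equality $I^2=QI$ into the Valabrega--Valla criterion for Cohen--Macaulayness of the associated graded ring.

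First I would dispose of the identity $\overline{G}(I)=G(I)$, which is immediate from normality: by \defref{def:pgideal}(5) we have $\overline{I^n}=I^n$ for every $n\ge 1$, so the graded pieces $\overline{I^n}/\overline{I^{n+1}}$ and $I^n/I^{n+1}$ coincide and the two graded rings are literally equal. It therefore remains only to prove that $G(I)$ is Cohen--Macaulay. At this point I would record that $A$, being a two-dimensional normal local domain, is Cohen--Macaulay (Serre's criterion: normality gives $S_2$, and $S_2$ in dimension two is Cohen--Macaulayness), and that the residue field $k$ is infinite, so a minimal reduction $Q=(a,b)$ of $I$ exists and is generated by a system of parameters, hence by a regular sequence on $A$.

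Next I would iterate the relation $I^2=QI$. A trivial induction gives $I^{n+1}=QI^{n}$ for all $n\ge 1$, and in particular $I^{n+1}\subseteq Q$ for every $n\ge 1$. The Valabrega--Valla criterion asserts that, for a minimal reduction $Q$ generated by a regular sequence, $G(I)$ is Cohen--Macaulay if and only if $Q\cap I^{n}=QI^{n-1}$ for all $n\ge 1$. For $n=1$ this reads $Q\cap I=Q=QI^0$, which holds since $Q\subseteq I$; for $n\ge 2$ the inclusion $I^{n}=QI^{n-1}\subseteq Q$ forces $Q\cap I^{n}=I^{n}=QI^{n-1}$. Hence the criterion is satisfied, $a,b$ yield a homogeneous regular sequence on $G(I)$, and $G(I)$ is Cohen--Macaulay, which completes the argument.

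The computational content is essentially vacuous once $I^{n+1}=QI^{n}$ is in hand, so the real load-bearing ingredient is the reduction-number-one criterion itself; I expect that citing the Valabrega--Valla theorem (or, equivalently, the Huneke--Ooishi characterization of the equality $\bar e_1(I)=\bar e_0(I)-\ell_A(A/I)$, which is exactly \defref{def:pgideal}(2)) is the only nontrivial external input. The one point I would check carefully is that $a$ and $b$ have linearly independent images in $I/I^2=G(I)_1$, so that they genuinely form a length-two homogeneous regular sequence and $G(I)$ attains the expected dimension; this follows from $Q$ being a minimal reduction together with the Cohen--Macaulayness and two-dimensionality of $A$.
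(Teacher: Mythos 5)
Your argument is correct and is essentially the proof that the paper delegates to the cited references: normality of $I$ gives $\overline{G}(I)=G(I)$ on the nose, and $I^{n+1}=QI^{n}$ fed into the Valabrega--Valla criterion \cite{FormRing} (the same tool the paper itself invokes in the proof of \lemref{CMnrbr}) yields that $a^{*},b^{*}$ is a homogeneous $G(I)$-regular sequence, whence $G(I)$ is Cohen--Macaulay. The only cosmetic remark is that your final worry is misplaced: one does not need linear independence of $a^{*},b^{*}$ in $I/I^{2}$ as a separate check, since the verified condition $Q\cap I^{n}=QI^{n-1}$ for all $n$ already certifies a $G(I)$-regular sequence of length $2=\dim G(I)$ with Artinian quotient, which is all that is required.
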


\par \vspace{2mm}
Note that $\nr(I)=1$ does not necessarily imply 
the Cohen-Macaulayness of $\overline{G}(I)$.

\begin{exam}[\textrm{cf. \cite[Example 3.10]{OWY5}}] 
\label{nr(I)<r(I)}
Let $K$ be a filed of $\chara K \ne 2,3$, and 
let $A=K[[xy,xz,yz,y^2,z^2]]$ with 
$x^2=y^6+z^6$. 
That is, $A=K[[a,b,c,d,e]]/\fra$, where $\fra$ is generated 
by the following polynomials:
\[
\begin{array}{lll}
a^2-d^4-de^3, & ab-cd^3-ce^3, & b^2-d^3e-e^4 \\
ac-bd, & ae-bc, & de-c^2 
\end{array}
\]
Then $A$ is an excellent normal local domain with $\dim_k \m/\m^2=e_0(\m)+\dim A-1=5$. 
\par \vspace{1mm}
If we  put $I=(xy,xz,yz,y^2,z^4)=(a,b,c,d,e^2) \supset Q=(c,d-e^2)$, 
then $I=I_Z$ and $1=\nr(I) < \br(I)=3$. 
In particular, $\overline{G}(I)$ is not Cohen-Macaulay 
by the following lemma. 
\par \vspace{1mm}
On the other hand, $G(I)$ is Cohen-Macaulay because $I^2=QI$.  
\end{exam}

\begin{lem}\label{CMnrbr}
Assume that $\overline{G}(I)$ is Cohen-Macaulay. Then
\begin{enumerate}
\item $\nr(I)=\br(I)$ holds true.  
\item If $A$ is not a rational singularity, that is, $p_g(A) \ge 1$, then 
$\br(I) \le p_g(A)$. 
\end{enumerate}
\end{lem}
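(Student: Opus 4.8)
The plan is to reduce both statements to a single \emph{propagation} property: if $\overline{G}(I)$ is Cohen--Macaulay and $\overline{I^{r+1}}=Q\overline{I^r}$ for a minimal reduction $Q$, then $\overline{I^{n+1}}=Q\overline{I^n}$ for every $n\ge r$. Granting this, part (1) is immediate. The inequality $\nr(I)\le\br(I)$ holds trivially from the definitions, since the condition defining $\br(I)$, evaluated at $n=\br(I)$, is exactly the condition occurring in $\nr(I)$; and propagation applied at $r=\nr(I)$ shows that $\overline{I^{n+1}}=Q\overline{I^n}$ persists for all $n\ge\nr(I)$, giving $\br(I)\le\nr(I)$ and hence equality. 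For part (2) I would argue by contradiction: by \corref{pgbr}(1) we always have $\br(I)\le p_g(A)+1$, so suppose $\br(I)=p_g(A)+1$. Then \corref{pgbr}(2) forces $\nr(I)=1$, whereas part (1) gives $\nr(I)=\br(I)=p_g(A)+1\ge 2$ (using $p_g(A)\ge1$), a contradiction. Hence $\br(I)\le p_g(A)$.

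The core is therefore the propagation property. First I would record that $\overline{G}(I)$ is a two-dimensional graded ring, since $\dim A=2$ and $I$ is $\m$-primary, and that the normal Rees algebra $\bigoplus_{n\ge0}\overline{I^n}t^n$ is module-finite over $A[Qt]$ because $Q$ and $I$ share the same integral closure. Consequently the images $a^\ast,b^\ast\in\overline{I}/\overline{I^2}=\overline{G}(I)_1$ of a minimal reduction $Q=(a,b)$ form a homogeneous system of parameters, and since $\overline{G}(I)$ is Cohen--Macaulay, $a^\ast,b^\ast$ is in fact a regular sequence. (It is worth keeping in mind, as a numerical cross-check, that by \proref{q(nI)formula} the differences $d_n:=q((n-1)I)-q(nI)$ are weakly decreasing with $\ell_A(\overline{I^{n+1}}/Q\overline{I^n})=d_n-d_{n+1}$, so propagation is equivalent to saying these differences cannot plateau and then drop again.)

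With $a^\ast,b^\ast$ a regular sequence I would prove propagation by lifting Koszul syzygies. Assume $\overline{I^{r+1}}=Q\overline{I^r}$ and take $x\in\overline{I^{r+2}}\subseteq\overline{I^{r+1}}=a\overline{I^r}+b\overline{I^r}$, so $x=ay+bz$ with $y,z\in\overline{I^r}$. Since $x\in\overline{I^{r+2}}$, the images $\bar y,\bar z\in\overline{G}(I)_r$ satisfy $a^\ast\bar y+b^\ast\bar z=0$ in degree $r+1$; the regular sequence property forces $\bar y=b^\ast w$ and $\bar z=-a^\ast w$ for some $w\in\overline{G}(I)_{r-1}$. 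Lifting $w$ to $\tilde w\in\overline{I^{r-1}}$ and replacing $y,z$ by $y'=y-b\tilde w$ and $z'=z+a\tilde w$ yields $y',z'\in\overline{I^{r+1}}$ with $x=ay'+bz'\in Q\overline{I^{r+1}}$. This gives $\overline{I^{r+2}}\subseteq Q\overline{I^{r+1}}$; the reverse inclusion is automatic from $Q\subseteq\overline{I}$, and induction completes the argument.

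The step I expect to require the most care is establishing that $a^\ast,b^\ast$ is a genuine regular sequence, i.e. verifying that $Q$ reduces the \emph{normal} filtration so that $a^\ast,b^\ast$ is a homogeneous system of parameters in $\overline{G}(I)$; this rests on the module-finiteness of the normal Rees algebra over $A[Qt]$ in the excellent setting. Once that is secured, the Koszul-lifting computation is routine, and the remaining implications follow formally from \corref{pgbr} together with \proref{q(nI)formula}.
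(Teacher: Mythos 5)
Your proof is correct and follows essentially the same route as the paper: part (2) is exactly the paper's argument via Corollary~\ref{pgbr}, and part (1) rests on the same key fact that the images $a^{*},b^{*}$ of a minimal reduction form a homogeneous regular sequence in the Cohen--Macaulay ring $\overline{G}(I)$. The only difference is that where the paper deduces $\overline{I^{n+1}}=Q\cap\overline{I^{n+1}}=Q\overline{I^{n}}$ for all $n\ge\nr(I)$ directly from the Valabrega--Valla criterion (citing \cite{FormRing}, \cite{GN}), you prove the equivalent propagation statement inline by induction with an explicit Koszul-syzygy lifting --- a self-contained but standard substitute for that citation.
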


\begin{proof}
(1) Assume that $\overline{G}(I)$ is Cohen-Macaulay
 and put $\nr(I)=r \ge 1$. 
Take a minimal reduction $Q$ of $I$. 
Then we have $\overline{I^{n+1}} \subset \overline{I^{r+1}}=
Q\overline{I^r} \subset Q$ for every integer $n \ge r$.
Since $\overline{G}(I)$ is Cohen-Macaulay, we get $\overline{I^{n+1}} \subset Q \cap \overline{I^{n+1}}=Q\overline{I^n}$ for such an integer $n$ (cf. \cite{FormRing}, \cite{GN}). 
Therefore $\br(I) \le r=\nr(I)$, as required.  
\par \vspace{1mm}
(2) The assertion follows from (1) and Corollary \ref{pgbr}. 
\end{proof}

\par \vspace{2mm}
An $\m$-primary ideal $I$ is called {\it good} if $I^2=QI$ and 
$I=Q:I$ for some (every) minimal reduction $Q$ of $I$. 

\begin{prop} \label{pgGor}
Assume $A$ is Gorenstein and $I=I_Z$ is a $p_g$-ideal, that is, $I=I_Z$ and 
$\br(I)=1$.  
Then the following conditions are equivalent$:$ 
\begin{enumerate}
\item $\overline{G}(I) (=G(I))$ is Gorenstein. 
\item $I$ is good. 
\item $2 \cdot \ell_A(A/I)=e_0(I)$ 
$(\text{i.e.}\;\ell_A(A/I)=\ell_A(I/Q))$. 
\item $KZ=0$, where $K$ is a canonical divisor on $X$. 
\end{enumerate}
\end{prop}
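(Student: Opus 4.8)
The plan is to reduce the Gorenstein property of the two-dimensional Cohen--Macaulay graded ring $G(I)$ to a statement about an Artinian algebra, and then to match the conditions one at a time. First I would record what the $p_g$-hypothesis provides: by \proref{pgCM} the ring $\overline{G}(I)=G(I)$ is Cohen--Macaulay, and by \defref{def:pgideal} we may choose a minimal reduction $Q=(a,b)$ with $I^2=QI$, so $I^{n+1}=Q^nI$ for all $n\ge 0$. Since $G(I)$ is Cohen--Macaulay and $Q$ is a reduction, the initial forms $a^{*},b^{*}\in G(I)_1$ form a homogeneous system of parameters, hence a regular sequence on $G(I)$ (Valabrega--Valla). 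Therefore $G(I)$ is Gorenstein if and only if the Artinian graded ring $B:=G(I)/(a^{*},b^{*})G(I)$ is Gorenstein. The relation $I^2=QI$ forces $B_n=0$ for $n\ge 2$, while $B_0=A/I$ and $B_1=I/(Q+I^2)=I/Q$, and $I^2=QI\subseteq Q$ shows that $I$ annihilates the $A$-module $I/Q$. Thus $B$ is exactly the Nagata idealization (trivial extension) $(A/I)\ltimes(I/Q)$. (Here one uses that an integrally closed $\m$-primary ideal in our non-regular setting has $\mu(I)\ge 3$, by negative-definiteness of the exceptional intersection form, so that $I/Q\ne 0$ and the idealization is non-degenerate.)

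Next I would invoke the standard criterion (Reiten) that for the Artinian local ring $A/I$ the idealization $(A/I)\ltimes M$ is Gorenstein if and only if $M\cong\omega_{A/I}$, where $\omega_{A/I}=\Hom_k(A/I,k)$ is the canonical module. Since $A$ is Gorenstein and $Q$ is generated by a regular sequence, $A/Q$ is Artinian Gorenstein, hence self-dual, so the canonical module of the quotient $A/I=(A/Q)/(I/Q)$ is $\omega_{A/I}=\Hom_{A/Q}(A/I,A/Q)=(Q\colon I)/Q$. As $I^2\subseteq Q$ we have the inclusion $I/Q\subseteq(Q\colon I)/Q=\omega_{A/I}$. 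Hence $B$ is Gorenstein precisely when this inclusion is an equality, i.e. when $I=Q\colon I$; combined with the already-available $I^2=QI$, this is the goodness of $I$, giving $(1)\Leftrightarrow(2)$.

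To reach the numerical conditions I would compare lengths. From $\ell_A(\omega_{A/I})=\ell_A(A/I)$ and the inclusion $I/Q\subseteq(Q\colon I)/Q$, the equality $I=Q\colon I$ holds if and only if $\ell_A(I/Q)=\ell_A(A/I)$; feeding in $e_0(I)=e_0(Q)=\ell_A(A/Q)=\ell_A(A/I)+\ell_A(I/Q)$ (valid because $A$ is Cohen--Macaulay and $Q$ is a parameter ideal) turns this into $2\,\ell_A(A/I)=e_0(I)$, which is $(3)$, so $(2)\Leftrightarrow(3)$. Finally, for $(3)\Leftrightarrow(4)$ I would apply Kato's Riemann--Roch formula (\thmref{RRformula}) together with $q(I)=p_g(A)$ (the defining property of a $p_g$-ideal) and $e_0(I)=-Z^2$ (\proref{NorCoeff}); these give $\ell_A(A/I)=-(Z^2+KZ)/2$, so $2\,\ell_A(A/I)=e_0(I)=-Z^2$ reads $-(Z^2+KZ)=-Z^2$, that is, $KZ=0$.

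The step I expect to require the most care is the identification of $B$ with the idealization $(A/I)\ltimes(I/Q)$ together with the computation $\omega_{A/I}=(Q\colon I)/Q$: one must check that killing the regular sequence $a^{*},b^{*}$ genuinely collapses $G(I)$ to an algebra supported in degrees $0$ and $1$ (this is exactly where $I^2=QI$ and the Cohen--Macaulayness of $G(I)$ are both used), that the idealization is non-degenerate ($I/Q\ne 0$), and that Matlis/canonical duality over the Artinian Gorenstein ring $A/Q$ is correctly applied to the quotient $A/I$. Once these are in place, the remaining implications are formal manipulations with lengths and the Riemann--Roch identity.
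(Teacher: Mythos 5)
Your argument is correct, but it takes a different (more self-contained) route than the paper for the main equivalences. The paper's proof is essentially two citations: the equivalence $(1)\Leftrightarrow(2)\Leftrightarrow(3)$ is quoted from Goto--Iai--Watanabe \cite[Theorem 6.1]{GIW}, and $(3)\Leftrightarrow(4)$ from Kato's Riemann--Roch formula together with $q(I)=p_g(A)$ and $\bar e_0(I)=-Z^2$ --- the latter computation being exactly what you do. What you add is a direct proof of the GIW part: you cut down by the regular sequence $a^{*},b^{*}$ (legitimate by \proref{pgCM} and Valabrega--Valla), observe that $I^2=QI$ collapses $B=G(I)/(a^{*},b^{*})$ to the idealization $(A/I)\ltimes(I/Q)$, and then apply Reiten's criterion together with $\omega_{A/I}=(Q\colon I)/Q$ computed by duality over the Artinian Gorenstein ring $A/Q$. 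This is a clean reduction and is close in spirit to how the paper itself handles the harder elliptic case in \thmref{Main} (where it also passes to $\overline{G}(I)/(a^{*},b^{*})$ and computes socles/lengths via $Q\colon I$ and Matlis duality), so your proof has the virtue of making Proposition~\ref{pgGor} and Theorem~\ref{Main} visibly parallel, at the cost of being longer than a citation. One small caveat: your justification that $I/Q\ne 0$ "by negative-definiteness" is not quite the right reason --- negative-definiteness only gives $-Z^2\ge 1$, hence $\mu(I)\ge 2$; what actually rules out $I=Q$ is that an integrally closed parameter ideal forces $A$ to be regular (Goto), combined with the paper's standing assumption that $A$ is not regular. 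The fact you need is true in this setting, so this does not affect the validity of the proof, but the cited reason should be corrected.
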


\begin{proof}
$(1)\Longleftrightarrow (2) \Longleftrightarrow (3)$ follows from 
\cite[Theorem 6.1]{GIW}. 
$(3) \Longleftrightarrow (4)$ follows from Kato's Riemann-Roch formula
(Theorem \ref{RRformula}). 
\end{proof}

\par \vspace{2mm}
Next, we recall the notion of elliptic ideals and strongly elliptic ideals. 

\begin{defn}[\textrm{cf. \cite[Theorem 3.2]{ORWY}}] \label{def:elliptic} 
An ideal $I=I_Z$ is called the \textit{elliptic ideal}
if one of the following equivalent conditions is satisfied: 
\begin{enumerate}
\item $q(I) =q(\infty I) < p_g(A)$.
\item $\bar e_1(I)-\bar e_0(I)+\ell_A(A/I)= \bar e_2(I)>0$. 
\item $\br(I)=2$. 
\end{enumerate}
\end{defn}

\begin{defn}[\textrm{cf. \cite[Theorem 3.9]{ORWY}}] \label{def:SEideal}
An ideal $I=I_Z$ is called the \textit{strongly elliptic ideal}
if one of the following equivalent conditions is satisfied: 
\begin{enumerate}
\item $p_g(A)-1=q(I) =q(\infty I)$. 
\item $\bar e_1(I)-\bar e_0(I)+\ell_A(A/I)=1$ and $\nr(I)=\br(I)$. 
\item $\bar e_2(I)=1$.  
\item $\br(I)=2$ and $\ell_A(\overline{I^2}/QI)=1$.  
\end{enumerate}
\end{defn}

\begin{prop}[\textrm{cf. \cite[Theorem 3.2]{ORWY}}] \label{ellCM}
If $I$ is an elliptic ideal, then $\overline{G}(I)$ is Cohen-Macaulay. 
\end{prop}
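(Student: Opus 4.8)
The plan is to verify Cohen--Macaulayness through the Valabrega--Valla criterion for the normal filtration $\{\overline{I^n}\}$. Since $A$ is a two-dimensional normal, hence Cohen--Macaulay, local ring with infinite residue field $k$, I fix a minimal reduction $Q=(a,b)$ of $I$ with $a,b$ a general (superficial) sequence. By that criterion (cf. \cite{FormRing}, \cite{GN}), $\overline{G}(I)$ is Cohen--Macaulay if and only if
\[
Q\cap \overline{I^n}=Q\overline{I^{n-1}}\qquad\text{for all } n\ge 1 ,
\]
so the whole problem reduces to these identities, and the elliptic hypothesis $\br(I)=2$ should dispose of all but one of them.

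First I would record the two easy ranges. For $n=1$ the identity reads $Q\cap I=Q$, which is clear since $Q\subseteq I$. For $n\ge 3$ we have $n-1\ge 2$, so $\br(I)=2$ gives $\overline{I^n}=Q\overline{I^{n-1}}\subseteq Q$; hence $Q\cap\overline{I^n}=\overline{I^n}=Q\overline{I^{n-1}}$. Everything therefore comes down to the single equality
\[
Q\cap\overline{I^2}=QI ,
\]
whose nontrivial direction is $Q\cap\overline{I^2}\subseteq QI$.

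Before attacking that, I would establish $\depth\overline{G}(I)\ge 1$ for free, which also shows that $n=2$ is the only place the criterion can fail. Writing $\overline{I^m}=H^0(X,\mathcal{O}_X(-mZ))=\{x\in A : \ord_{E_i}(x)\ge m\,\ord_{E_i}(Z)\text{ for every exceptional } E_i\}$ and using that a general $a\in I=H^0(X,\mathcal{O}_X(-Z))$ satisfies $\ord_{E_i}(a)=\ord_{E_i}(Z)$ (because $\mathcal{O}_X(-Z)$ is globally generated by $I$), one gets $(\overline{I^{m+1}} : a)\subseteq \overline{I^m}$ for all $m\ge 0$. This says exactly that the initial form $a^{*}\in\overline{G}(I)_1$ is a nonzerodivisor, so $\depth\overline{G}(I)\ge 1$; since $\dim\overline{G}(I)=2$, Cohen--Macaulayness is then equivalent to $b^{*}$ being a nonzerodivisor modulo $a^{*}$, i.e. to the displayed equality at $n=2$.

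Finally, for $Q\cap\overline{I^2}=QI$ I would pass to cohomology. As $Q$ is a reduction, $a,b$ still generate $\mathcal{O}_X(-Z)$, so the pencil $\langle a,b\rangle$ is base point free and yields the Koszul-type short exact sequence $0\to\mathcal{O}_X\to\mathcal{O}_X(-Z)^{2}\xrightarrow{(a,b)}\mathcal{O}_X(-2Z)\to 0$ and, more generally, its twists by $-nZ$. Taking cohomology identifies $\overline{I^{n+1}}/Q\overline{I^n}$ with the kernel of the connecting map $H^1(X,\mathcal{O}_X(-(n-1)Z))\to H^1(X,\mathcal{O}_X(-nZ))^{2}$ induced by $\binom{-b}{a}$, in agreement with \proref{q(nI)formula}; the obstruction $(Q\cap\overline{I^2})/QI$ can then be organized into a subquotient of these $H^1$-groups. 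The elliptic hypothesis, in the form $q(I)=q(2I)=q(\infty I)<p_g(A)$ (equivalently, $q(nI)$ is already constant for $n\ge 1$), should force the relevant map to be injective and so kill the obstruction. I expect this last step to be the main obstacle: it requires translating the intersection $Q\cap\overline{I^2}$---not merely the comparison of $Q\overline{I}$ with $\overline{I^2}$---into the cohomological maps and then exploiting the immediate stabilization of $q(nI)$, precisely because $QI$ need not be integrally closed and hence cannot be detected by the valuations $\ord_{E_i}$ alone.
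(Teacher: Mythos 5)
Your reduction is sound as far as it goes: the Valabrega--Valla criterion for the filtration $\{\overline{I^n}\}$ (with $Q=(a,b)$ generated by a superficial sequence), the triviality of the condition at $n=1$, the disposal of all $n\ge 3$ using $\br(I)=2$, and the verification that $a^{*}$ is $\overline{G}(I)$-regular via the valuative description of $\overline{I^m}=H^0(X,\mathcal O_X(-mZ))$ are all correct. This is essentially the route the paper itself points to: it does not prove the proposition internally but cites \cite[Theorem 3.2]{ORWY}, and the remark immediately following it attributes the result to \cite[Theorems 4.4, 4.5 and Theorem of p.~317]{Hun}.

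The genuine gap is the one step you flag as ``the main obstacle'' and then leave unproved: the identity $Q\cap\overline{I^2}=QI$. This is exactly the Huneke--Itoh intersection theorem ($\overline{I^{2}}\cap Q=Q\overline{I}$ for a minimal reduction $Q$ of an integrally closed $\m$-primary ideal in an analytically unramified Cohen--Macaulay local ring, \cite{Hun,It1}), which the paper invokes verbatim in the proof of \thmref{Main} to get $\ell_A(\overline{I^2}/Q\cap\overline{I^2})=\ell_A(\overline{I^2}/QI)$. Your proposed mechanism for it cannot work: the Koszul sequences $0\to\mathcal O_X(-(n-1)Z)\to\mathcal O_X(-nZ)^{2}\to\mathcal O_X(-(n+1)Z)\to 0$ and the stabilization $q(I)=q(2I)=q(\infty I)$ compute the quotients $\overline{I^{n+1}}/Q\overline{I^n}$ (this is precisely \proref{q(nI)formula}), not the intersection $(Q\cap\overline{I^2})/QI$. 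Moreover, $Q\cap\overline{I^2}=QI$ holds for \emph{every} integrally closed $\m$-primary ideal, elliptic or not, so the elliptic hypothesis is not the input that kills this obstruction; the ellipticity is already fully spent on the range $n\ge 3$. As written, your argument proves the proposition only modulo an unproved (and deep) theorem, for which you would need to cite \cite{Hun} or \cite{It1} rather than expect it to follow from the cohomological bookkeeping.
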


\begin{rem}
If $A$ is an analytically unramified Cohen-Macaulay local domain 
containing a field with $\dim A=2$ and $\br(I)=2$, then the Cohen-Macaulayness of $\overline{G}(I)$ also follows from 
\cite[Theorems 4.4, 4.5 and Theorem of p.317]{Hun}. 
\end{rem}

\par \vspace{2mm}
So it is natural to ask the following question. 

\begin{quest}
Let $I \subset A$ be an elliptic ideal. 
When is $\overline{G}(I)$ Gorenstein?
\end{quest} 

\par \vspace{2mm}
In the next section, we give an answer as the main result in this paper. 
Before doing that, we recall the following criterion for Gorensteinness of $G(I)$. 
Note that if $G(I)$ or $\overline{G}(I)$ is Gorenstein, then so is $A$ (see Proposition 1.6 and 5.17 of \cite{TW}).

\begin{thm}[\textrm{cf. \cite[Theorem 1.4]{GI}}]
Assume that $A$ is a Gorenstein local ring of any dimension. 
Let $I \subset A$ be an $\m$-primary ideal with $r(I)=r$, 
and let $Q$ be a minimal reduction of $I$.  
Suppose that $G(I)$ is Cohen-Macaulay. 
Then the following conditions are equivalent$:$
\begin{enumerate}
\item $G(I)$ is Gorenstein. 
\item $I^r=Q^r \colon I^r$. 
\end{enumerate} 
\end{thm}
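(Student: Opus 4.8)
The plan is to reduce the statement to an Artinian one by killing a regular sequence, read off Gorensteinness from the socle, and then identify the resulting nondegeneracy with the colon equality. Write $d=\dim A$ and $G=G(I)$, so $\dim G=d$ and $G$ is Cohen-Macaulay by hypothesis. Since $Q$ is a minimal reduction generated by a system of parameters $a_1,\ldots,a_d$ and $G$ is Cohen-Macaulay, the initial forms $a_1^\ast,\ldots,a_d^\ast\in[G]_1$ form a homogeneous regular sequence on $G$ (Valabrega-Valla); in particular $r_Q(I)$ is independent of the minimal reduction, so $r_Q(I)=r(I)=r$, and $I^{r+1}=QI^r$ yields $I^{r+j}=Q^jI^r$ for all $j\ge 1$. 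First I would record the automatic inclusion $I^r\subseteq Q^r:I^r$, which follows at once from $I^{2r}=Q^rI^r\subseteq Q^r$; thus condition $(2)$ amounts to the reverse inclusion $Q^r:I^r\subseteq I^r$.

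Because Gorensteinness passes in both directions across a regular sequence, $G$ is Gorenstein if and only if the Artinian graded ring $\overline G:=G/(a_1^\ast,\ldots,a_d^\ast)$ is Gorenstein. I would compute its graded pieces $[\overline G]_n=I^n/(QI^{n-1}+I^{n+1})$; the reduction number gives $[\overline G]_n=0$ for $n>r$ and $[\overline G]_r=I^r/QI^{r-1}\ne 0$, so the $a$-invariant of $G$ is $a(G)=r-d$. Using the Valabrega-Valla equalities $Q\cap I^n=QI^{n-1}$ one identifies $\overline G\cong \gr_J(B)$, the associated graded ring of the image $J=IB$ in the Artinian Gorenstein ring $B:=A/Q$, with $J^{r+1}=0\ne J^r$. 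Since $\overline G$ is Artinian local with residue field $k$, it is Gorenstein exactly when $\Soc(\overline G)$ is one-dimensional over $k$.

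The core of the argument is the socle computation. In top degree one finds $\Soc(\overline G)_r=\Soc(B)\cap J^r$, which is either $0$ or the whole one-dimensional socle $\Soc(B)$ of $B$, while for $n<r$ the piece $\Soc(\overline G)_n$ is cut out by the two colon conditions $\m_B x\subseteq J^{n+1}$ and $Jx\subseteq J^{n+2}$. Hence $\overline G$ is Gorenstein if and only if its socle is concentrated in degree $r$ and equals $\Soc(B)$, i.e. $\Soc(B)\subseteq J^r$ together with the vanishing of every lower socle piece. The remaining step is to package this socle nondegeneracy into the single equality $Q^r:I^r=I^r$: here I would exploit the self-duality of $A$ with respect to the complete intersection $Q$ — that $B=A/Q$ is Gorenstein, and more generally that colon against the powers $Q^j$ behaves like Matlis duality — combined with $I^{r+j}=Q^jI^r$ to lift the degreewise socle data to the global module $Q^r:I^r$. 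Concretely, one realizes the graded canonical module $\omega_G$ through colon ideals against $Q^r$, so that $\omega_G$ is cyclic (equivalently $G$ is Gorenstein) precisely when its generating degree piece fills all of $I^r/QI^{r-1}$, which is the equality $Q^r:I^r=I^r$.

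I expect the main obstacle to be exactly this last identification: matching the intrinsic socle (equivalently, cyclic-canonical-module) condition on $\overline G$ with the extrinsic colon condition phrased in $A$. Two features make it delicate — $[\overline G]_0=A/I$ is \emph{not} a field, so one must argue through the Gorenstein duality of $A/Q$ rather than ordinary $k$-linear Macaulay duality, and the colon involves the power $Q^r$ rather than $Q$ itself. Once the perfect pairing of $A/Q$ is set up so as to propagate through the relations $I^{r+j}=Q^jI^r$, the equivalence $(1)\Leftrightarrow(2)$ follows.
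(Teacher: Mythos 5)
The paper itself does not prove this statement; it is quoted as known background from Goto--Iai \cite[Theorem 1.4]{GI}, so the only proof to compare against is the one in that reference (and, loosely, the paper's own Theorem 2.1, whose proof for $\br(I)=2$ follows the same socle strategy you describe, but with the condition $Q:I=Q+\overline{I^2}$ rather than a colon against $Q^r$). Your reduction steps are correct and standard: by Valabrega--Valla the initial forms $a_1^{*},\dots,a_d^{*}$ form a $G$-regular sequence, $\overline G\cong \gr_J(B)$ with $B=A/Q$ Artinian Gorenstein and $J=IB$, the top degree is $r$ so $a(G)=r-d$, and $G$ is Gorenstein iff $\dim_k\Soc(\overline G)=1$, which (since the top-degree socle is automatically nonzero and sits inside the one-dimensional $\Soc(B)$) amounts to the vanishing of $\Soc(\overline G)_n$ for all $n<r$.

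The gap is at the final step, exactly where you flag it, and it is not cosmetic: it is the entire content of the theorem. Unwinding the socle/canonical-module condition degreewise via Matlis duality over $B$ gives $r$ separate conditions, namely $Q:I^{n+1}=I^{r-n}+(Q:I^{n})$ for $0\le n<r$, whereas condition $(2)$ is a single equation involving the power $Q^{r}$ rather than $Q$; no argument is given that bridges the two. The mechanism you propose for the bridge --- that ``colon against the powers $Q^j$ behaves like Matlis duality'' --- fails as stated: for $j\ge 2$ and $d\ge 2$ the ring $A/Q^{j}$ is not Gorenstein (already $\Soc(A/Q^2)\supseteq\Soc(Q/Q^2)$ has dimension at least $d$), so $\fra\mapsto Q^{j}:\fra$ is not a duality. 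Likewise, the criterion that ``$\omega_G$ is cyclic precisely when its generating degree piece fills all of $I^r/QI^{r-1}$'' cannot be right as stated: the initial piece $[K_{\overline G}]_{-r}\cong A/(Q:I^{r})$ is always a cyclic $A$-module; the real question is whether it generates all of $K_{\overline G}$ over $\overline G$, i.e.\ whether $K_{G}$ is generated in degree $d-r$, and relating that generation property to the single colon $Q^{r}:I^{r}$ is precisely the nontrivial input of \cite{GI} (via Ooishi's description of $K_{G(I)}$ through the filtration $Q^{n}:I^{r}$). A concrete symptom that something essential is missing: for $I=Q$ one has $r=1$, $Q^{r}:I^{r}=A\ne I^{r}$, yet $G(Q)$ is a polynomial ring over $A/Q$ and hence Gorenstein; so any correct proof must use $I\ne Q$ somewhere, and your sketch never does.
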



\section{Gorensteinness for normal tangent cone of elliptic ideals}

In this section, we keep the notation as in the previous section. 
For example,  $(A,\m)$ denotes an excellent  two-dimensional 
normal local domain  containing an algebraically closed field 
$k= A/\m$.
The main aim of this section is to prove the following theorem. 

\begin{thm} \label{Main}
Assume that $A$ is Gorenstein and $\br(I)=2$. 
For any minimal reduction $Q$ of $I$,  
the following conditions are equivalent$:$
\begin{enumerate}
\item $\overline{G}(I)$ is Gorenstein. 
\item $Q \colon I=Q+\overline{I^2}$. 
\item $\ell_A(\overline{I^2}/QI)=\ell_A(A/I)$. 
\item $\overline{e}_2(I)=\ell_A(A/I)$. 
\item $KZ=-Z^2$, that is, $\chi(Z):=-\dfrac{KZ+Z^2}{2}=0$
\end{enumerate}
When this is the case, $\ell_A(A/I) \le p_g(A)$.
\end{thm}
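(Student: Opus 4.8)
The plan is to prove the cyclic chain $(1)\Rightarrow(2)\Rightarrow(3)\Rightarrow(4)\Rightarrow(5)\Rightarrow(1)$, using the hypothesis $\br(I)=2$ together with the fact (\proref{ellCM}) that $\overline{G}(I)$ is automatically Cohen-Macaulay for an elliptic ideal. Because of this Cohen-Macaulayness, \lemref{CMnrbr} gives $\nr(I)=\br(I)=2$, so $\overline{I^3}=Q\overline{I^2}$ while $\overline{I^2}\neq QI$; this is the structural backbone I would exploit throughout. The reduction number of $\overline{G}(I)$ is therefore exactly $2$, which is what makes a clean Gorenstein duality criterion available.

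For $(1)\Leftrightarrow(2)$ I would invoke the general Gorensteinness criterion for Cohen-Macaulay associated graded rings cited just before the theorem (\cite{GI}, Theorem 1.4), adapted to the normal filtration $\{\overline{I^n}\}$: since the relevant reduction number is $2$, Gorensteinness of $\overline{G}(I)$ should be equivalent to $\overline{I^2}=Q^2\colon_A \overline{I^2}$, and I would translate this colon condition into the stated $Q\colon I=Q+\overline{I^2}$ using that $A$ is Gorenstein (so colons are computed against the parameter ideal $Q$) and that $\overline{I^3}=Q\overline{I^2}$. The equivalences $(2)\Leftrightarrow(3)\Leftrightarrow(4)\Leftrightarrow(5)$ I expect to be essentially length bookkeeping: $(2)\Leftrightarrow(3)$ follows by computing lengths, since $Q\colon I\supseteq Q+\overline{I^2}$ always holds in the Gorenstein case and $\ell_A((Q\colon I)/Q)=\ell_A(A/I)$ by Gorenstein duality, while $\ell_A((Q+\overline{I^2})/Q)=\ell_A(\overline{I^2}/(Q\cap\overline{I^2}))=\ell_A(\overline{I^2}/QI)$ once one checks $Q\cap\overline{I^2}=QI$ (which uses $\nr(I)=2$ and the Cohen-Macaulayness via the Valabrega–Valla-type criterion already used in \lemref{CMnrbr}). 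For $(3)\Leftrightarrow(4)$ I would use that $\br(I)=2$ forces $q(I)=q(2I)=q(\infty I)$, so by \proref{q(nI)formula}(1) with $n=1$ one gets $\ell_A(\overline{I^2}/QI)=p_g(A)-q(I)$, and by \proref{NorCoeff}(4) one gets $\overline{e}_2(I)=p_g(A)-q(\infty I)=p_g(A)-q(I)$; hence $\ell_A(\overline{I^2}/QI)=\overline{e}_2(I)$ unconditionally, which reduces $(3)\Leftrightarrow(4)$ to a tautology relabelling the common value as $\ell_A(A/I)$. Finally $(4)\Leftrightarrow(5)$ should come from combining Kato's Riemann–Roch formula (\thmref{RRformula}), which reads $\ell_A(A/I)+q(I)=-(Z^2+KZ)/2+p_g(A)$, with the identity $\overline{e}_2(I)=p_g(A)-q(I)$: substituting shows $\overline{e}_2(I)-\ell_A(A/I)=-(KZ+Z^2)/2=\chi(Z)$, so $(4)$ holds iff $\chi(Z)=0$ iff $KZ=-Z^2$.

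The concluding inequality $\ell_A(A/I)\le p_g(A)$ I would read off from $(4)$: since $\overline{e}_2(I)=p_g(A)-q(\infty I)\le p_g(A)$ and $(4)$ identifies $\ell_A(A/I)$ with $\overline{e}_2(I)$, the bound is immediate.

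The step I expect to be the main obstacle is $(1)\Leftrightarrow(2)$: the Gorensteinness criterion of \cite{GI} is stated for the \emph{ordinary} associated graded ring $G(I)$ and its $I$-adic reduction number, whereas here I need its analogue for the normal filtration $\{\overline{I^n}\}$ and the graded canonical module of $\overline{G}(I)$. I would need to verify carefully that the canonical module of the Cohen-Macaulay ring $\overline{G}(I)$ sits in the expected graded degree so that the duality criterion produces exactly the colon equality $\overline{I^2}=Q^2\colon\overline{I^2}$, and then that this is faithfully equivalent to the more arithmetic condition $Q\colon I=Q+\overline{I^2}$; getting the degree shift and the a-invariant right, and ensuring $\overline{I^3}=Q\overline{I^2}$ is used correctly to collapse higher colons, is the delicate part.
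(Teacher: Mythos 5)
Your treatment of $(3)\Leftrightarrow(4)\Leftrightarrow(5)$ and of the final inequality $\ell_A(A/I)\le p_g(A)$ coincides with the paper's: both rest on $q(I)=q(2I)=q(\infty I)$, \proref{q(nI)formula}, \proref{NorCoeff} and Kato's Riemann--Roch, and this part is correct. Likewise your length bookkeeping for $(2)\Leftrightarrow(3)$ is essentially the paper's argument: the paper obtains $Q\cap\overline{I^2}=QI$ from Huneke--Itoh rather than from a Valabrega--Valla argument for the normal filtration, but both are legitimate, and the inclusion $Q+\overline{I^2}\subseteq Q\colon I$ comes (as you use implicitly) from $I\,\overline{I^2}\subseteq\overline{I^3}=Q\overline{I^2}\subseteq Q$, i.e.\ from $\br(I)=2$ rather than from Gorensteinness.

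The genuine gap is exactly where you flagged it: $(1)\Leftrightarrow(2)$. The Goto--Iai criterion quoted before the theorem is stated for $G(I)$ and the $I$-adic reduction number; your route needs (a) a filtration version for $\overline{G}(I)$ asserting that Gorensteinness is equivalent to $\overline{I^2}=Q^2\colon\overline{I^2}$, and (b) a proof that this colon identity is equivalent to $Q\colon I=Q+\overline{I^2}$. Neither is a formality --- in particular $A/Q^2$ is not Gorenstein, so the usual double-colon manipulations do not immediately convert $Q^2\colon\overline{I^2}$ into a statement about $Q\colon I$ --- and your proposal leaves both unproved. The paper avoids this entirely: since $a^{*},b^{*}$ is a regular sequence on the Cohen--Macaulay ring $\overline{G}(I)$, it reduces to the Artinian ring $B=\overline{G}(I)/(a^{*},b^{*})\cong A/I\oplus I/(Q+\overline{I^2})\oplus(Q+\overline{I^2})/Q$; for $(2)\Rightarrow(1)$ it computes $\Soc(B)$ degree by degree using $Q\colon(Q\colon I)=I$ and $(Q\colon\m)/Q\cong k$, and for $(1)\Rightarrow(2)$ it uses the symmetry $\ell_A(B_0)=\ell_A(B_2)$ of a graded Artinian Gorenstein ring with socle in degree $2$, together with $\ell_A(B_2)-\ell_A(B_0)=\ell_A\bigl((Q\colon I)/(Q+\overline{I^2})\bigr)\ge 0$. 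To repair your argument, either substitute this direct socle computation for the appeal to \cite{GI}, or actually establish the filtration analogue of the Goto--Iai theorem and carry out the colon translation.
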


We use the following lemma to prove the theorem above.

\begin{lem}
[cf. Huneke {\cite[Theorem, p.317]{Hun}},   Itoh {\cite[Theorem 1]{It1}}]
\label{l:H-I}
Let $(A,\m)$ be a Cohen-Macaulay local ring, and $I$ an $\m$-primary integrally closed ideal.
If $Q$ is a minimal reduction of $I$, then $Q \cap \overline{I^2}=QI$.
\end{lem}

\begin{proof}[Proof of \thmref{Main}]
By Proposition \ref{ellCM},
$\overline{G}(I)$ is Cohen-Macaulay. 

\par \vspace{1mm}
Let $Q=(a,b)$ be a minimal reduction of $I$ and fix it. 
Then $a^{*}, b^{*} \in \overline{G}(I)$ forms a homogeneous regular sequence of degree $1$ and we get 
\[
B:=\overline{G}(I)/(a^{*},b^{*}) \cong
A/I \oplus I/(Q+\overline{I^2}) \oplus (Q+\overline{I^2})/Q. 
\]
Thus $\overline{G}(I)$ is Gorenstein if and only if $B$ is Gorenstein. 
$B$ is an Artinian local ring with the maximal ideal 
$ \MM_B = \m/I \oplus B_1 \oplus B_2$ and we denote $\Soc(B) = \Ann_B 
(\MM_B)$.   

\par \vspace{2mm} 
$(2) \Longrightarrow (1):$ 
It suffices to show $\dim_k \Soc(B)=1$.  
Let $x^{*}$ be a homogeneous element  in $\Soc(B)$. 
Namely, for $x \in \overline{I^m}$, we denote $x^{*}$ 
the image of $x$ in $B_m$. 
\par \vspace{1mm} \par \noindent
\underline{\bf The case where $x^{*} \in B_0$:}
\par \vspace{1mm}
Since $x^{*} B_2 =0$ in $B_2$, we have
$x \in Q \colon \overline{I^2}=Q \colon (Q+\overline{I^2})=Q:(Q:I)=I$. 
Hence $x^{*}=0$ in $B_0$. 
\par \vspace{1mm} \par \noindent
\underline{\bf The case where $x^{*} \in B_1$:}
\par \vspace{1mm}
Since $x^{*}B_1=0$ in $B_2$, we have $x \in Q:I=Q+\overline{I^2}$. 
Hence $x^{*}=0$ in $B_1$.

\par \vspace{1mm} \par \noindent
\underline{\bf The case where $x^{*} \in B_2$:}
\par \vspace{1mm}
Since $x \in Q:\m$, we have $x^{*} \in \Soc((Q:I)/Q) 
=(Q\colon \m)/Q\cong k$. 
\par \vspace{1mm}\par \noindent
Therefore $\dim_K \Soc(B)=1$ and thus $B$ is Gorenstein. 

\par \vspace{2mm} 
$(1) \Longrightarrow (2) \Longleftrightarrow (3):$ 
\lemref{l:H-I} yields  
\[
\ell_A(B_2)=\ell_A(Q+\overline{I^2}/Q) 
=\ell_A( \overline{I^2}/Q \cap \overline{I^2})=
\ell_A( \overline{I^2}/QI) (\ne 0). 
\]
From Matlis duality, we have 
\[
\ell_A(Q:I/Q)=\ell_A(\Hom_A(A/I,A/Q))=\ell_A(K_{A/I})=\ell_A(A/I)=\ell_A(B_0) \; (\ne 0). 
\]
As $I\overline{I^2} \subset \overline{I^3} = Q \overline{I^2} \subset Q$, 
we have $\overline{I^2} \subset Q:I$. 
In particular, $Q+\overline{I^2} \subset Q:I$. 
Hence
\[
\ell_A(A/I) - \ell_A(\overline{I^2} /QI) = \ell_A(B_0) - \ell_A(B_2)
=\ell_A(Q:I/Q+\overline{I^2})\ge 0. 
\]
If $B$ is Gorenstein, then $\ell_A(B_0)-\ell_A(B_2)=0$.  
Thus the implication follows from here. 

\par \vspace{2mm} 
$(3) \Longleftrightarrow (4) \Longleftrightarrow (5):$ 
Since $\br(I)=2$, we have $q(I)=q(2I)=q(\infty I)$. 
Then by Proposition \ref{q(nI)formula}, we have 
\[
\ell_A(\overline{I^2}/QI) =p_g(A)-q(I). 
\]
Moreover, Proposition \ref{NorCoeff} implies
\[
\bar e_2(I)=p_g(A)-q(\infty I) = p_g(A)-q(I)=\ell_A(\overline{I^2}/QI).
\]
On the other hand, Kato's Riemann-Roch formula yields 
\[
\ell_A(A/I)+q(I)=\chi(Z)+p_g(A). 
\]
Hence 
\[
\chi(Z)=\ell_A(A/I)-\{p_g(A)-q(I)\}=\ell_A(A/I)-\ell_A(\overline{I^2}/QI)=\ell_A(A/I)-\bar e_2(I). 
\]
The assertion follows from here. 
\end{proof}

\par \vspace{2mm}
Using the theorem above, we ask the following question:

\begin {quest} 
When is $\overline{G}(\m)$ Gorenstein? 
\end{quest}

\par 
If $\br(\m)$ is small enough, then $\overline{G}(\m)$ is Gorenstein. 
Precisely speaking, if $\br(\m) \le 2$, then $\overline{G}(\m)$ is Gorenstein. 
But there exists an example of $A$ for which $\overline{G}(\m)$ is \textit{not} Gorenstein
(see Example \ref{ex355}).  
  
\begin{prop} \label{GmGor}
Assume $A$ is Gorenstein which is not regular. 
\begin{enumerate}
\item If $\br(\m) =1$, then $\overline{G}(\m)=G(\m)$ is Gorenstein. 
\item If $\br(\m) =2$, then $\overline{G}(\m)$ is Gorenstein. 
\item If $p_g(A) \le 2$, then  $\overline{G}(\m)$ is Gorenstein. 
\end{enumerate}
\end{prop}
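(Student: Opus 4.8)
The goal is Proposition \ref{GmGor}, which asserts Gorensteinness of $\overline{G}(\m)$ for a non-regular Gorenstein $A$ under three separate hypotheses. I would treat the three parts as follows, noting that they overlap since $\br(\m)\le 2$ covers both (1) and (2), and (3) should reduce to the $\br(\m)\le 2$ case.

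\textbf{Part (1).}
Here $\br(\m)=1$ means $\m$ is a $p_g$-ideal, so I would invoke Proposition \ref{pgGor}. To conclude Gorensteinness I only need to verify one of its equivalent conditions; the cleanest is condition (4), namely $KZ=0$ where $\m=I_Z$. For the maximal ideal the representing cycle $Z$ is the fundamental cycle $Z_0$ on the minimal resolution, and I expect that when $\m$ is a $p_g$-ideal the ring is in fact a rational singularity (or at least that $\chi(Z_0)=1$ forces $KZ_0=0$ via Kato's Riemann-Roch formula, Theorem \ref{RRformula}, combined with $e_0(\m)=-Z^2$ and the good-ness of $\m$). So the step is: translate $\br(\m)=1$ into the numerical equality $2\ell_A(A/\m)=e_0(\m)$, which is automatic since $\ell_A(A/\m)=1$ and $e_0(\m)=-Z^2=$ the multiplicity, then check this forces condition (3) of Proposition \ref{pgGor}. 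The subtlety is that $\ell_A(A/\m)=1$ always, so condition (3) reads $e_0(\m)=2$, i.e.\ multiplicity two; I would need to argue that $\br(\m)=1$ together with Gorenstein non-regular indeed yields a hypersurface double point, which is a known structural fact about $p_g$-maximal-ideals.

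\textbf{Part (2).}
Here $\br(\m)=2$, so $\m$ is an elliptic ideal and Theorem \ref{Main} applies directly. I would verify condition (3) of that theorem: $\ell_A(\overline{\m^2}/Q\m)=\ell_A(A/\m)=1$. Since the left-hand side equals $p_g(A)-q(\m)$ (from the displayed formula after Proposition \ref{q(nI)formula}, using $\br(\m)=2$), the task becomes showing $p_g(A)-q(\m)=1$, equivalently that the single ``elliptic'' jump in the sequence $q(k\m)$ drops by exactly one at the relevant step. Because $\ell_A(A/\m)=1$ is forced, condition (3) of Theorem \ref{Main} is the assertion $\overline{e}_2(\m)=1$, i.e.\ $\m$ is a \emph{strongly} elliptic ideal (Definition \ref{def:SEideal}(3)); so the real content is that $\br(\m)=2$ for the maximal ideal automatically gives $\overline{e}_2(\m)=1$. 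I would prove this from Theorem \ref{Main}(5), $\chi(Z)=0$, by feeding $\ell_A(A/\m)=1$ and the Riemann-Roch identity $\ell_A(A/\m)+q(\m)=\chi(Z)+p_g(A)$ back in; once $\ell_A(A/\m)=1$, conditions (3),(4),(5) of Theorem \ref{Main} are all equivalent to the single numerical statement $\overline{e}_2(\m)=1$, and this is exactly what $\br(\m)=2$ delivers for an ideal of colength one.

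\textbf{Part (3) and the main obstacle.}
For $p_g(A)\le 2$ I would combine Corollary \ref{pgbr}(1), which gives $\br(\m)\le p_g(A)+1\le 3$, with Lemma \ref{CMnrbr}(2), which gives $\br(\m)\le p_g(A)\le 2$ provided $\overline{G}(\m)$ is Cohen-Macaulay. The issue is that Lemma \ref{CMnrbr}(2) \emph{assumes} Cohen-Macaulayness, which is what I do not yet know in the $\br(\m)=3$, $p_g=2$ borderline case. The clean route is: if $p_g(A)\le 1$ then $A$ is rational or strongly elliptic and $\m$ is a $p_g$- or (strongly) elliptic ideal, handled by (1)/(2); if $p_g(A)=2$, I must rule out $\br(\m)=3$. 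I expect $\br(\m)=3$ with $p_g=2$ to force $\nr(\m)=1$ by Corollary \ref{pgbr}(2), hence a degeneration like Example \ref{nr(I)<r(I)} where $\overline{G}$ is \emph{not} Cohen-Macaulay, and the proposition would fail --- so the real work is showing this cannot occur for the maximal ideal of a $p_g=2$ Gorenstein singularity. \textbf{This is the step I expect to be the main obstacle:} proving that $\br(\m)\le 2$ whenever $p_g(A)\le 2$ and $A$ is Gorenstein, i.e.\ that the maximal ideal of such a ring is never a ``genuinely degenerate'' ideal with $\nr(\m)<\br(\m)$. I would attack it by the Riemann-Roch bound, using that $\ell_A(A/\m)=1$ pins the numerical invariants tightly (from $\ell_A(A/\m)+q(\m)=\chi(Z)+p_g(A)$ one gets strong constraints on $q(\m)$ and $\chi(Z)$), and then check the finitely many possibilities compatible with $p_g(A)=2$ all satisfy $q(\m)=q(2\m)$, i.e.\ $\br(\m)=2$, after which Part (2) finishes the argument.
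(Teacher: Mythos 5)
Your skeleton matches the paper's (reduce (1) and (2) to Proposition \ref{pgGor} and Theorem \ref{Main} respectively, and reduce (3) to the statement $\br(\m)\le 2$), but the two steps you leave open are exactly where the content lies, and the routes you sketch for them do not close. In parts (1) and (2) the missing ingredient is the same one-line socle computation: since $A$ is Gorenstein of dimension two and $Q$ is a parameter ideal, $\ell_A((Q:\m)/Q)=1$. For (1), $\m^2=Q\m\subset Q$ gives $\m\subset Q:\m$, hence $\ell_A(\m/Q)\le 1$ and $e_0(\m)=\ell_A(A/Q)=2$; this is the ``known structural fact'' you defer to, and it is what makes $A$ a hypersurface so that $\overline{G}(\m)=G(\m)$ is Gorenstein. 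For (2), the claim that $\br(\m)=2$ ``delivers'' $\overline{e}_2(\m)=1$ for an ideal of colength one is not justified as stated, and your proposed derivation from condition (5) of Theorem \ref{Main} is circular, since $\chi(Z)=0$ is one of the equivalent conditions to be established, not a hypothesis. The correct argument is again the socle bound: $\overline{\m^3}=Q\overline{\m^2}$ gives $\overline{\m^2}\subset Q:\m$, while $\overline{\m^2}\ne Q\m$ together with Itoh's theorem $Q\cap\overline{\m^2}=Q\m$ gives $Q\subsetneq Q+\overline{\m^2}\subset Q:\m$; since $(Q:\m)/Q$ has length one, $Q:\m=Q+\overline{\m^2}$, which is condition (2) of Theorem \ref{Main}. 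Without Gorensteinness $\ell_A(\overline{\m^2}/Q\m)$ can well exceed $1$, so nothing about colength one alone forces $\overline{e}_2(\m)=1$.

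For part (3) you correctly isolate the obstacle --- ruling out $\br(\m)=3$ when $p_g(A)=2$ --- but the ``Riemann--Roch plus finitely many possibilities'' attack is not a proof, and I do not see how to make it one from the numerical data alone. The paper settles this by citing two external results: a (numerically) Gorenstein surface singularity with $p_g(A)=2$ is an elliptic singularity (Yau, \cite{yau.max}), and in an elliptic singularity every $\m$-primary integrally closed ideal satisfies $\br(I)\le 2$ (Okuma, \cite{Ok}). With those, $p_g(A)\le 2$ forces $\br(\m)\le 2$, and parts (1)--(2) finish the argument.
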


\begin{proof}
(1) Suppose $\br(\m)=1$. 
Since $\m$ is a $p_g$-ideal, then $\m^2=Q\m$ and $\m$ is normal. 
As $A$ is Gorenstein, we have $e_0(\m)=2$. 
In particular, $A$ is a hypersurface. 
Thus $\overline{G}(\m)=G(\m)$ is Gorenstein. 
\par \vspace{1mm} 
(2) Suppose $\br(\m)=2$.
Take a minimal reduction $Q$ of $\m$. 
Then $Q + \ol{\m^2} \ne Q$  by \lemref{l:H-I}.  
Since $Q \subsetneq Q+\overline{\m^2} \subset Q:\m$ and 
$\ell_A(Q \colon \m/Q)=1$, we have $Q:\m=Q+\overline{\m^2}$.  
Then Theorem \ref{Main} implies that $\overline{G}(\m)$ is Gorenstein. 
\par \vspace{1mm} 
(3) If $p_g(A) \le 1$, then $\br(\m) \le p_g(A)+1 \le 2$.  
If $p_g(A)=2$, then $A$ is an elliptic singularity by \cite[Theorem B]{yau.max}. 
Thus \cite[Theorem 3.3]{Ok} implies $\br(\m) \le 2$.  
In both cases, the assertion follows from (1), (2). 
\end{proof}

A local ring $A$ which satisfies $p_g(A)=1$ is called 
a {\it strongly elliptic singularity}. 
Then any $\m$-primary integrally closed ideal $I$ 
is either a $p_g$-ideal or a strongly elliptic ideal; see \cite{ORWY}.

\begin{cor}
Assume that $A$ is Gorenstein. 
Let $I$ be a strongly elliptic ideal. 
Then $\overline{G}(I)$ is Gorenstein if and only if $I=\m$.  
\end{cor}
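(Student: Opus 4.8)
The plan is to prove the corollary by reducing it to the main theorem (Theorem \ref{Main}) and exploiting the very special numerical constraints that strong ellipticity imposes. Recall that a strongly elliptic ideal satisfies $\br(I)=2$ together with $\ell_A(\overline{I^2}/QI)=1$; equivalently $\bar e_2(I)=1$ and $p_g(A)-q(I)=1$. Since $\br(I)=2$, the hypotheses of Theorem \ref{Main} apply, so $\overline{G}(I)$ is Gorenstein if and only if condition (4) holds, namely $\bar e_2(I)=\ell_A(A/I)$. First I would substitute the strongly elliptic datum $\bar e_2(I)=1$ into this equivalence: $\overline{G}(I)$ is Gorenstein if and only if $\ell_A(A/I)=1$.

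The next step is to translate $\ell_A(A/I)=1$ into the geometric condition $I=\m$. Since $A$ is not regular and $I$ is $\m$-primary and integrally closed, the only integrally closed $\m$-primary ideal with colength one is the maximal ideal itself: indeed $\ell_A(A/I)=1$ forces $I=\m$, because $I \subsetneq A$ is a proper ideal of colength one, hence $I$ is a maximal ideal, and the unique maximal ideal is $\m$. Conversely, if $I=\m$ then trivially $\ell_A(A/I)=1$. So the forward direction is immediate and the reverse direction requires checking that $\m$ is genuinely a strongly elliptic ideal with $\overline{G}(\m)$ Gorenstein in this setting; but this is supplied by Proposition \ref{GmGor}(2), since a strongly elliptic ideal has $\br(\m)=2$, whence $\overline{G}(\m)$ is Gorenstein. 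The chain therefore closes: $\overline{G}(I)$ Gorenstein $\Longleftrightarrow \bar e_2(I)=\ell_A(A/I) \Longleftrightarrow \ell_A(A/I)=1 \Longleftrightarrow I=\m$.

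The main subtlety to watch is the consistency between the two characterizations of ``strongly elliptic,'' and making sure that when $I=\m$ we really are in the strongly elliptic case rather than the $p_g$-ideal case. Because $A$ is not regular, $\m$ cannot be a $p_g$-ideal of the degenerate type that would make $\overline{G}(\m)=G(\m)$ a polynomial ring, and the hypothesis that $I$ is strongly elliptic already fixes $\bar e_2(I)=1>0$, so $I$ is not a $p_g$-ideal. The only genuinely load-bearing external input is that colength one plus integral closure plus $\m$-primary forces $I=\m$, which is elementary, so I expect no real obstacle; the whole argument is a short deduction from Theorem \ref{Main} and Proposition \ref{GmGor}. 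If one wanted to avoid invoking Proposition \ref{GmGor} for the reverse implication, one could instead verify condition (4) of Theorem \ref{Main} directly for $I=\m$, using $\bar e_2(\m)=1=\ell_A(A/\m)$.
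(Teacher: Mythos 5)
Your proposal is correct and follows the route the paper intends (the corollary is stated without proof, immediately after Theorem~\ref{Main} and Proposition~\ref{GmGor}): combining condition (3) or (4) of Theorem~\ref{Main} with the defining constraint $\bar e_2(I)=\ell_A(\overline{I^2}/QI)=1$ of a strongly elliptic ideal reduces Gorensteinness of $\overline{G}(I)$ to $\ell_A(A/I)=1$, i.e.\ $I=\m$, and the converse is handled exactly as you do. No gaps.
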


\par \vspace{2mm}
A Gorenstein local ring $A$ is called a \textit{minimally elliptic singularity} if it is a strongly elliptic singularity. 

\begin{cor}
Assume that $A$ is a minimally elliptic singularity. 
Let $I$ be an integrally closed $\m$-primary ideal. 
Then $\overline{G}(I)$ is Gorenstein 
if and only if one of the following cases occurs:
$($a$)$ $I=\m$ $($b$)$ $I$ is normal and good. 
\end{cor}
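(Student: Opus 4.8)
The plan is to combine the dichotomy available in a strongly elliptic singularity with the two Gorensteinness criteria already in hand. Since $A$ is minimally elliptic it is in particular a strongly elliptic singularity, so (as recalled just above) every integrally closed $\m$-primary ideal $I$ is either a $p_g$-ideal or a strongly elliptic ideal, and these two classes are disjoint. I would organize the proof around this dichotomy, handling each direction of the equivalence case by case.

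For the forward implication I would assume $\overline{G}(I)$ is Gorenstein. If $I$ is a $p_g$-ideal, then it is normal by definition, and Proposition \ref{pgGor} shows that Gorensteinness of $\overline{G}(I)=G(I)$ forces $I$ to be good; thus $I$ is normal and good, which is case (b). If instead $I$ is strongly elliptic, the Corollary immediately preceding the statement gives $I=\m$, which is case (a).

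For the converse, case (a) is handled directly: since $p_g(A)=1\le 2$, Proposition \ref{GmGor}(3) shows $\overline{G}(\m)$ is Gorenstein. For case (b), if $I$ is normal and good then in particular $I^2=QI$ for a minimal reduction $Q$ with $I$ normal, so condition (5) of Definition \ref{def:pgideal} shows that $I$ is a $p_g$-ideal; since $I$ is moreover good, Proposition \ref{pgGor} gives that $\overline{G}(I)$ is Gorenstein.

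The main point to watch --- rather than any computation --- is the bookkeeping between the two disjoint ideal classes and the two cases (a), (b), which may overlap: case (b) captures exactly the good $p_g$-ideals, while case (a) accounts for the strongly elliptic ideal $\m$ (when $\m$ is strongly elliptic). When $\m$ happens to be a good $p_g$-ideal both (a) and (b) hold simultaneously, which is consistent with the inclusive ``one of the following cases.'' No delicate estimates are needed; the content is entirely in assembling Propositions \ref{pgGor} and \ref{GmGor}, Definition \ref{def:pgideal}, and the strongly elliptic corollary.
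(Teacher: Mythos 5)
Your proof is correct and is essentially the argument the paper intends (the corollary is stated without proof there): the dichotomy for $p_g(A)=1$ reduces everything to Proposition \ref{pgGor} in the $p_g$-ideal case and to the immediately preceding corollary (equivalently, condition (3) of Theorem \ref{Main} with $\bar e_2(I)=1$) in the strongly elliptic case, with Proposition \ref{GmGor} covering $I=\m$. Your remark that cases (a) and (b) may overlap when $\m$ is a good $p_g$-ideal is a correct and worthwhile observation.
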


\par \vspace{2mm}
In order to give several examples, we recall the following lemma. 

\begin{lem} [cf. \cite{OWY4}]
Let $2 \le a \le b \le c$ be integers, and put $R=\C[x,y,z]_{(x,y,z)}$, 
and $A=R/(x^a+y^b+z^c)$. 
Then $A$ is an excellent normal Gorenstein local domain 
of $\dim A=2$. 
\begin{enumerate}
\item $p_g(A)
=\sharp\{(i,j,k) \in \bbZ_{\ge 0} \,|\,0 \le ibc+jac+kab \le a(A) \}$, 
where $a(A)=abc-(ab+bc+ca)$; this formula follows from 
Theorem~$\ref{t:pg-f}$ below. 
\item $\br(\m)= \lfloor \frac{(a-1)b}{a} \rfloor$.  
\end{enumerate}
\end{lem}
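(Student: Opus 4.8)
The plan is to prove the three assertions separately, disposing of the ring theory first and leaving the computation of $\br(\m)$ for last. For the structural claim, $A$ is the quotient of the three-dimensional regular local ring $R$ by the single nonzero element $f=x^a+y^b+z^c$, so it is a hypersurface, hence $\dim A=2$, Cohen--Macaulay and Gorenstein. In characteristic zero the partials $ax^{a-1},by^{b-1},cz^{c-1}$ vanish simultaneously only at the origin, so $\Spec A$ is regular in codimension one; being a hypersurface it is $S_2$, so normal by Serre's criterion, and it is a domain as $f$ is irreducible. Excellence is inherited from $R$. Throughout I use the quasi-homogeneous grading $\deg x=bc$, $\deg y=ac$, $\deg z=ab$, for which $f$ is homogeneous of degree $abc$; the $a$-invariant is $a(A)=abc-(ab+bc+ca)$, and $\{x^iy^jz^k : 0\le i<a\}$ descends to a $\C$-basis of $A$ (indeed of each homogeneous component of $G(\m)=\C[x,y,z]/(\ini f)$, by a Hilbert-function count, $\ini f$ being a form of degree $a$).

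For (1) I would invoke the general formula for the geometric genus of a graded ring (Theorem~\ref{t:pg-f}), which counts graded data in degrees up to $a(A)$ and thereby identifies $p_g(A)$ with the number of monomials $x^iy^jz^k$ of weighted degree in $[0,a(A)]$. The only point to verify is that the relation $f$ is invisible in this range: any monomial with $i\ge a$ has weighted degree $\ge abc>a(A)$, so in degrees $\le a(A)$ the quotient-basis monomials and the unrestricted triples $(i,j,k)\in\Z_{\ge0}^3$ coincide. This yields exactly $\sharp\{(i,j,k)\in\Z_{\ge0}^3 : 0\le ibc+jac+kab\le a(A)\}$.

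The heart is (2), and the main obstacle is to pin down the integral closures $\overline{\m^n}$ explicitly. The $\C^{*}$-action makes each $\overline{\m^n}$ a span of basis monomials, so it suffices to decide membership of $m_0=x^iy^jz^k$ with $i<a$. Using the standard asymptotic criterion, $m_0\in\overline{\m^n}$ iff $\overline{\ord}_\m(m_0):=\lim_{N\to\infty}\ord_\m(m_0^{N})/N\ge n$, where $\ord_\m$ of a basis monomial equals its total degree. I would compute this limit directly: reducing $m_0^{N}=x^{iN}y^{jN}z^{kN}$ modulo $f$ via $x^a=-(y^b+z^c)$ replaces $x^{iN}=x^t(x^a)^s$ (with $iN=as+t$, $0\le t<a$) by terms whose lowest total degree is $bs+t$ (the term from $y^{bs}$, using $b\le c$ and no cancellation), so $\ord_\m(m_0^{N})=bs+t+(j+k)N$ and $\overline{\ord}_\m(m_0)=\tfrac{b}{a}i+j+k$. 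Hence
\[
\overline{\m^n}=\big\langle\, x^iy^jz^k \ :\ 0\le i<a,\ \tfrac{b}{a}\,i+j+k\ge n \,\big\rangle ,
\]
and as a cross-check this gives $\ell_A(A/\overline{\m^n})\sim\tfrac{a}{2}n^2$, matching $\bar e_0(\m)=e_0(\m)=a$ (so $\m$ has a single Rees valuation and the description is exact).

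Finally I would convert this into $\br(\m)$. Writing $d_i=\fl{bi/a}$ for $0\le i<a$, the membership condition $j+k\ge n-d_i$ gives $\ell_A(A/\overline{\m^n})=\sum_{i:\,d_i<n}\binom{n-d_i+1}{2}$, while the normal Hilbert polynomial $\overline{P}_\m(n-1)$ is the same expression $\sum_{i=0}^{a-1}\binom{n-d_i+1}{2}$ read as a genuine polynomial in $n$. Combining Kato's Riemann--Roch formula (Theorem~\ref{RRformula}) with Proposition~\ref{NorCoeff} (both sides being degree-two polynomials agreeing for $n\gg0$) yields
\[
q(n\m)-q(\infty\m)=\overline{P}_\m(n-1)-\ell_A(A/\overline{\m^n})=\sum_{i:\,d_i\ge n}\binom{d_i-n}{2}.
\]
Since $d_{a-1}=\max_i d_i$, this quantity vanishes exactly when $n\ge d_{a-1}-1$ and is strictly positive for $n\le d_{a-1}-2$, so $q(n\m)$ strictly decreases until it first attains $q(\infty\m)$ at $n=d_{a-1}-1$. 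By Proposition~\ref{q(nI)formula}(2) the first index with $q((n-1)\m)=q(n\m)$ is therefore $n=d_{a-1}$, and since $d_{a-1}\ge a-1\ge1$ this gives $\br(\m)=d_{a-1}=\fl{(a-1)b/a}$. I expect the genuinely non-formal step to be the evaluation of $\overline{\ord}_\m$ (equivalently the integral-closure description); once $\overline{\m^n}$ is in hand, everything in (1) and (2) reduces to elementary lattice-point bookkeeping.
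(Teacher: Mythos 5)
Your structural claims and your derivation of (1) from Theorem~\ref{t:pg-f} are sound, and it is worth noting that the paper itself offers no internal proof of this lemma at all: it cites \cite{OWY4} for (2) and Theorem~\ref{t:pg-f} for (1), so your argument stands on its own. Within it, the inclusion $M_n:=\langle x^iy^jz^k : 0\le i<a,\ \tfrac{b}{a}i+j+k\ge n\rangle\subseteq\overline{\m^n}$ is correctly proved: your no-cancellation expansion of $m_0^N$ modulo $f$ is valid because the monomials with $x$-exponent $<a$ form a free basis over $\C[y,z]$, and Rees' theorem on asymptotic order applies since $A$ is analytically unramified. Granting $\overline{\m^n}=M_n$, the endgame is also correct: $q(n\m)-q(\infty\m)=\sum_{i:\,d_i\ge n}\binom{d_i-n}{2}$, the difference $q((n-1)\m)-q(n\m)=\sum_{i:\,d_i\ge n}(d_i-n)$ is strictly positive exactly for $n\le d_{a-1}-1$, and hence $\br(\m)=d_{a-1}=\lfloor (a-1)b/a\rfloor$; this reproduces the paper's Examples \ref{ex:Brieskorn} and \ref{ex355}.

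The genuine gap is the reverse inclusion $\overline{\m^n}\subseteq M_n$, and both justifications you offer for it fail. First, the diagonal symmetries of $f$ form only a one-dimensional torus (one needs $\lambda_x^a=\lambda_y^b=\lambda_z^c$), so invariance makes $\overline{\m^n}$ graded only for the single weighting $(bc,ac,ab)$, whose homogeneous components contain many distinct monomials (for $b=c$, all $y^jz^k$ with $j+k$ fixed lie in one component); gradedness therefore does not make $\overline{\m^n}$ a span of monomials. Second, the multiplicity cross-check only shows your filtration has the right $\bar e_0$, i.e.\ that $\ell_A(\overline{\m^n}/M_n)=o(n^2)$; this allows a linearly growing discrepancy and so cannot detect $\bar e_1,\bar e_2$ --- which is precisely the data $\br(\m)$ is made of --- and the inference that $\m$ has a single Rees valuation is false in general: already for $(a,b,c)=(2,2,3)$ the fundamental cycle $Z=E_1+E_2$ on the minimal resolution (an $A_2$-configuration of two $(-2)$-curves) satisfies $ZE_1=ZE_2=-1$, so $\m$ has two Rees valuations (the monomial description survives there only because the two valuations agree on monomials). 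A correct and short repair exists along your lines: consider the filtration by the weights $w(x)=b$, $w(y)=w(z)=a$, whose form ring is $\C[x,y,z]/(x^a+y^b)$ (resp.\ $\C[x,y,z]/(x^a+y^b+z^c)$ when $b=c$), a \emph{reduced} ring; since a nonzero initial form cannot be nilpotent, applying leading forms to an equation of integral dependence of $g$ over $\m^n\subseteq M_n$ forces $w(g)\ge an$, i.e.\ each $M_n$ is integrally closed and $\overline{\m^n}\subseteq\overline{M_n}=M_n$ (this is the standard reduced-form-ring criterion in the spirit of \cite{FormRing}, \cite{TW}, and is essentially the mechanism behind the computation in the cited source \cite{OWY4}). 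With that substitution for your two flawed sentences, the rest of your proof goes through verbatim.
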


For the convenience of readers, 
we recall the formula for $p_g(S)$ in the 
case $S$ is a normal Gorenstein graded ring.

\begin{thm}
[see {\cite[2.10]{Wt_rat}, \cite[3.3]{Wt}}]
\label{t:pg-f} 
Let $S=\bigoplus _{i\ge 0}S_i$ be a two-dimensional normal graded ring finitely generated over $S_0=k$ and let $S_+ = \bigoplus _{i\ge 1}S_i$.
Then the geometric genus of $(S, S_+)$ 
$($or its completion by $S_+$$)$ is given by 
\[
p_g(S)=\sum_{i=0}^{a(S)} \dim_k S_i,
\]
where $a(S)$ is the $a$-invariant of Goto--Watanabe $($\cite{GW}$)$.
\end{thm}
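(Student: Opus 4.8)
The plan is to realize the geometric genus through the canonical equivariant resolution attached to the grading (Pinkham's construction) and then to convert the resulting cohomology sum into the stated formula by graded local duality together with the Gorenstein hypothesis. Since $S$ is normal of dimension two with $S_0=k$, the scheme $C=\Proj S$ is an integral smooth projective curve and $L=\cO_C(1)$ is an ample line bundle with $d:=\deg L>0$; moreover the $S_2$-property ($H^0_{\m}(S)=H^1_{\m}(S)=0$) forces $S_n=H^0(C,L^n)$ for every $n$, both sides vanishing for $n<0$ and equal to $k$ for $n=0$.

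First I would construct the resolution. Let $\pi\colon \tX\to \Spec S$ be the total space of the line bundle $L^{-1}$ over $C$, which is smooth because $C$ is smooth, with zero section $E\cong C$ whose self-intersection is $E^2=\deg(L^{-1})=-d<0$. By Grauert's contractibility criterion $E$ contracts to a normal point, and since the ring of global functions of $\tX$ is $\bigoplus_{n\ge 0}H^0(C,L^n)=S$, the contraction is exactly $\Spec S$ (after completing at $S_+$); thus $\tX\to\Spec S$ is a resolution, which computes $p_g(S)$ by resolution-invariance. As $\pi$ is affine with $\pi_*\cO_{\tX}=\bigoplus_{n\ge0}L^n$, the Leray spectral sequence degenerates and gives
\[
p_g(S)=\dim_k H^1(\tX,\cO_{\tX})=\sum_{n\ge 0}\dim_k H^1(C,L^n),
\]
a finite sum because $L$ is ample.

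Next I would translate this into local cohomology and apply duality. The standard comparison between the local cohomology of $S$ and sheaf cohomology on $\Proj S$ yields $[H^2_{\m}(S)]_n\cong H^1(C,L^n)$ for all $n$, so $p_g(S)=\sum_{n\ge0}\dim_k[H^2_{\m}(S)]_n$. Graded local duality gives $\dim_k[H^2_{\m}(S)]_n=\dim_k[\omega_S]_{-n}$, where $\omega_S$ is the graded canonical module. Now the Gorenstein hypothesis enters: it forces $\omega_S\cong S(a)$ with $a=a(S)$ the $a$-invariant, hence $[\omega_S]_{-n}=S_{a-n}$ and
\[
p_g(S)=\sum_{n\ge 0}\dim_k S_{a-n}=\sum_{i=0}^{a(S)}\dim_k S_i,
\]
where the reindexing $i=a-n$ uses $S_i=0$ for $i<0$. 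This is the asserted formula.

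The formal parts—the local–global comparison, graded local duality, and the identification $\omega_S\cong S(a)$ for Gorenstein graded rings—are routine. I expect the main obstacle to be the geometric step: rigorously producing the equivariant resolution and checking that the contraction reproduces $\Spec S$. When $S$ is generated in degree one this is the line-bundle picture above; for a general (non-standardly graded) normal $S$ one must instead use Pinkham's Seifert $\mathbb{C}^{\ast}$-bundle over $C$ with an auxiliary $\Q$-divisor $D$ satisfying $S_n=H^0(C,\cO_C(\lfloor nD\rfloor))$, resolve the cyclic quotient singularities lying over the special points, and verify that the extra exceptional curves contribute nothing to $H^1$, so that $p_g(S)=\sum_{n\ge0}\dim_k H^1(C,\cO_C(\lfloor nD\rfloor))$ still holds. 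This careful handling of the Seifert data is the technical heart, and it is precisely what the cited works of Watanabe (\cite{Wt_rat}, \cite{Wt}) carry out.
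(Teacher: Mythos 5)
Your proof is correct, and in fact the paper contains no proof of this statement at all: it is quoted from Watanabe's papers \cite[2.10]{Wt_rat} and \cite[3.3]{Wt}, whose arguments run exactly along your lines --- Demazure's presentation $S_n=H^0(C,\cO_C(\lfloor nD\rfloor))$, the equivariant (Pinkham-type) resolution giving $p_g(S)=\sum_{n\ge 0}h^1(C,\cO_C(\lfloor nD\rfloor))=\sum_{n\ge0}\dim_k[H^2_{\m}(S)]_n$, and then graded local duality. One point deserves emphasis: you correctly inserted the Gorenstein hypothesis at the step $\omega_S\cong S(a(S))$, and this is genuinely needed even though it appears only in the sentence preceding the theorem environment, not in the statement itself. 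Without it the duality argument only yields $p_g(S)=\sum_{n\le 0}\dim_k[\omega_S]_n$, and the stated formula fails: for instance, with $C$ of genus $2$, $P\in C$ a non-Weierstrass point, and $S=\bigoplus_{n\ge 0}H^0(C,\cO_C(nP))$, one computes $p_g(S)=h^1(\cO_C)+h^0(K_C-P)=3$ while $a(S)=1$ and $\sum_{i=0}^{1}\dim_k S_i=2$. Your deferral of the non-standardly-graded case (resolving the cyclic quotient points of the Seifert partial resolution and using their rationality, so that $R^1$ of that resolution contributes nothing) is an accurate description of the technical content of the cited sources; the only cosmetic blemish is the appeal to Grauert's criterion, which is analytic --- in the algebraic setting one can simply observe that $\tX\to\Spec S$ is the projective affinization/blow-up morphism, so no contraction theorem is required.
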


\begin{exam} \label{ex:Brieskorn}
Assume that $A$ is Gorenstein but not regular, 
and let $g \ge 1$ be an integer. 
\begin{enumerate}
\item The case where $\br(\m)=1$. 
\begin{enumerate}
\item If $A$ is a rational singularity, that is, $p_g(A)=0$, then $\br(\m)=1$ and $\bar G(\m)=G(\m)$ is a hypersurface. 
\item If $A=\C[x,y,z]_{(x,y,z)}/(x^2+y^3+z^{6g+1})$,  
then $p_g(A)=g$, $\br(\m)=1$ and 
$\overline{G}(\m)=G(\m) \cong K[X,Y,Z]/(X^2)$ is Gorenstein. 
\end{enumerate}
\item  The case where $\br(\m)=2$. 
\par \vspace{1mm}
\begin{enumerate}
\item If $e_0(\m) \ge 3$, then $\m$ is a strongly elliptic ideal and 
$I=\m$ satisfies the condition of Theorem \ref{Main}(2). 
\par 
For instance, 
if $A=\C[x,y,z]_{(x,y,z)}/(x^3+y^3+z^{3g})$, 
then $p_g(A)=g$, $\br(\m)=2$  and 
\[
\overline{G}(\m) =G(\m) \cong 
\left\{
\begin{array}{ll}
\C[X,Y,Z]/(X^3+Y^3+Z^3) & \; \text{(if $g=1$)}; \\[1mm]
\C[X,Y,Z]/(X^3+Y^3)  & \; \text{(otherwise)}.
\end{array}
\right.
\]
\item If $A=\C[x,y,z]_{(x,y,z)}/(x^2+y^4+z^{4g})$, 
then $p_g(A)=g$, $\br(\m)=2$  and  $G(\m) \cong \C[X,Y,Z]/(X^2)$. 
Moreover, we have 
\[
\overline{G}(\m) \cong 
\left\{
\begin{array}{ll}
\C[X,Y,Z]/(X^2+Y^4+Z^4) & \; \text{(if $g=1$)}; \\[1mm]
\C[X,Y,Z]/(X^2+Y^4)  & \; \text{(otherwise)}.
\end{array}
\right.
\]
\end{enumerate}
\end{enumerate}
\end{exam}

\par 
The next example shows that neither  $\br(\m)=3$ nor $p_g(A)=3$ 
does \textit{not} imply 
that $\overline{G}(\m)$ is Gorenstein. 

\begin{exam}\label{ex355}
Let $A=\C[x,y,z]_{(x,y,z)}/(x^3+y^5+z^5)$. 
Then 
\begin{enumerate}
\item $\br(\m)=3$. 
\item $p_g(A)=3$. 
\item $\overline{G}(\m)$ is not Gorenstein. 
In fact, $\overline{G}(\m)/(y^{*}, z^{*}) \cong \C[X,V]/(X^2,XV,V^3)$. 
\end{enumerate} 
\end{exam}

\par \vspace{2mm}
We can probably give a similar criterion as in the case of $\br(I)=2$. 
Indeed, we have several results for Brieskorn type hypersurfaces
(cf. \cite{OWY4}).   
Moreover, we have the following example. 

\begin{exam}
Let $r,n$ be integers with $r \ge 2$ and $n \ge 1$. 
Let 
\[
A=k[x,y,z]_{(x,y,z)}/(x^{r+1}+y^{r+1}+z^{(r+1)n}).
\]
Let $I=(x,y,z^n)$ and $Q=(y,z^n)$. 
Then 
\begin{enumerate}
\item $\overline{G}(I)=G(I)$ is Gorenstein. 
\item $Q \colon I=Q+I^r$. 
\item $\ell_A(A/I)=\ell_A(I^r/QI^{r-1})$. 
\end{enumerate}
\end{exam}

\begin{quest}
Does $\br(I)=2$ imply $r(I) \le 2$? 
\end{quest}

\par \vspace{1mm}
This is true for strongly elliptic ideals. 
But we do not know in the case of general elliptic ideals. 
Moreover, if this were true, then we can give a complete answer 
to the question: when is $G(I)$ Gorenstein for elliptic ideals?  

\begin{exam}
Let $A=\C[x,y,z]_{(x,y,z)}/(x^2+y^4+z^8)$ and put $I=(x,y,z^2)$ and
$Q=(y,z^2)$.  
Then 
\begin{enumerate}
\item $\br(I)=2$. 
\item $r(I)=1$ and $Q:I=I$. In particular, $I$ is not normal. 
\item $\overline{G}(I) \cong \C[X,Y,Z,W]/(X^2+Y^4+W^4, Z^2)$ is Gorenstein. 
\end{enumerate} 
\end{exam} 

\section{Existence of elliptic ideals for which $\overline{G}(I)$ is Gorenstein}

The aim of this section is to give a complete classification of 
elliptic ideals $I = I_Z$ in a Gorenstein elliptic singularity 
which satisfy the conditions of \thmref{Main}.
First, we show that such a cycle $Z$ for which $I=I_Z, \br(I)=2$ and 
$\bar{G}(I)$ is Gorenstein is characterized  in terms of
the \lq\lq elliptic sequence" on 
the minimal resolution $X$ of $\Spec(A)$ and,  
in particular the possibility of 
such $Z$ is finite and bounded by $p_g(A)$ (\thmref{t:ZcA}). 
Up to this point,  
our argument is valid in any characteristics. 
To guarantee the existence of such ideals, we need to consider \proref{p:bsz^2-1}.
In the case $- Z^2 =1$, we need to check the Picard groups of the 
exceptional curves and have to assume $\chara{k}=0$. 
We will also give  examples of singularities $A, B$ which are hypersurfaces, have the same resolution graph,
such that the number of ideals $I$ for which $\bar{G}(I)$ is Gorenstein is $p_g(A)$ for $A$ and $0$ for $B$ (see Example \ref{e:NoMaxEll}).
\par 
Let $f \colon X \to \Spec A$ be a resolution with exceptional set $E$ and let $E=\bigcup E_i$ be the decomposition into irreducible components.
A divisor $D$ on $X$ is said to be {\em nef} (resp. {\em anti-nef}) if $DE_i\ge 0$ (resp. $DE_i\le 0$) for all $E_i \subset E$.
Recall that a divisor $D$ is called a {\em cycle} if $\supp(D)\subset E$. 
It is known that $D\ge 0$ if $D$ is an anti-nef cycle. 
 For a cycle $D > 0$ on $X$, we denote by $\chi(D)$ the Euler characteristic $\chi(\cO_D) = h^0(\cO_D) - h^1(\cO_D)$ and by $Z_D$ the fundamental cycle on $\supp(D)$, i.e., $Z_D$ is the minimal cycle such that $\supp(Z_D)=\supp(D)$ and $Z_DE_i\le 0$ for all $E_i\subset \supp(D)$.
By the Riemann-Roch formula, $\chi(D)=-(D^2+D K_X)/2$, where $K_X$ denotes a canonical divisor on $X$.

\begin{defn}[Wagreich {\cite[p. 428]{W}}]
A normal surface singularity $(A,\m)$ is called an {\em elliptic singularity} if one of the following equivalent conditions  holds:
\begin{enumerate}
 \item $\chi(D)\ge 0$ for all cycles $D>0$ and $\chi(F)=0$ for some
       cycle $F>0$;
 \item  $\chi(Z_E)=0$.
\end{enumerate}
(For the proof of the implication (2) $\Rightarrow$ (1), see \cite[Corollary 4.2]{La}, \cite[(6.4), (6.5)]{T}.)
\end{defn}

\begin{defn}[Laufer {\cite[Definition 3.1 and 3.2]{La}}]
If $A$ is an elliptic singularity, then there exists a unique cycle $E_{min}$ such that $\chi(E_{min})=0$ and $\chi(D)>0$ for all cycles $D$ satisfying $0<D<E_{min}$.
The cycle $E_{min}$ is called the {\em minimally elliptic cycle}.
\end{defn}

From \cite[Proposition 3.1, Proposition 3.2, Corollary 4.2]{La}, \cite[p.428]{W}, \cite[(6.4), (6.5)]{T}, we have the following.

\begin{prop}[cf. {\cite[2.3]{o.numGell}}]
 \label{p:ellchi}
Assume that $A$ is an elliptic singularity and let $D>0$ be an arbitrary cycle on $X$. 
Then we have the following.
\begin{enumerate}
\item $\chi(D)\ge 0$.
If $\chi(D)=0$, then $D\ge E_{min}$ and $D$ is connected, i.e., $\supp (D)$ is connected.
 \item If $D$ is a connected reduced cycle without component of $E_{min}$, then $E_{min}  D\le 1$ and $\chi(Z_D)=1$.  
\end{enumerate}
\end{prop}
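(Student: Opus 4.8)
The plan is to rely on nothing beyond the Riemann--Roch expression $\chi(D)=-(D^2+DK_X)/2$ recalled above, its bilinear consequence $\chi(C+C')=\chi(C)+\chi(C')-C\cdot C'$, the two equivalent descriptions of ellipticity, and the uniqueness clause in the definition of $E_{min}$. The nonnegativity $\chi(D)\ge 0$ in part (1) requires no argument, since it is condition (1) of the definition of an elliptic singularity.

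For the implication $\chi(D)=0\Rightarrow D\ge E_{min}$ I would argue entirely through minimal elements of $S=\defset{C>0}{\chi(C)=0}$. First I would check that \emph{every} minimal element $F$ of $S$ equals $E_{min}$: minimality forces each $C$ with $0<C<F$ to lie outside $S$, and then $\chi\ge 0$ gives $\chi(C)\ge 1>0$; thus $F$ satisfies the defining property of $E_{min}$, and the uniqueness clause yields $F=E_{min}$. Since the effective cycles $\le D$ form a finite set, any $D\in S$ dominates some minimal element of $S$, which must be $E_{min}$, so $D\ge E_{min}$. Connectedness follows at once: were $D=D_1+D_2$ with disjoint nonempty supports, then $D_1D_2=0$, so $0=\chi(D)=\chi(D_1)+\chi(D_2)$ with both summands $\ge 0$, whence $\chi(D_1)=\chi(D_2)=0$; but then $D_1,D_2\ge E_{min}>0$, contradicting disjointness of supports.

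For part (2) I would first note $\chi(Z_D)\ge 1$: as $\supp(Z_D)=\supp(D)$ avoids $\supp(E_{min})$, we have $Z_D\not\ge E_{min}$, so $\chi(Z_D)\ne 0$ by part (1), while $\chi(Z_D)\ge 0$ by ellipticity. For the reverse bound I would run Laufer's computation sequence for $Z_D$, starting from one component and repeatedly adjoining a component $E_i\subset\supp(D)$ with $Z_kE_i>0$ until the anti-nef cycle $Z_D$ is reached. The decisive input is that every component $E_i$ of $\supp(D)$ is a smooth rational curve: otherwise $\chi(E_i)\le 0$, hence $\chi(E_i)=0$, and part (1) would give $E_i\ge E_{min}$, which is impossible. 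Thus $\chi(E_i)=1$ for each component, and at each step $\chi(Z_{k+1})=\chi(Z_k)+\chi(E_{i_{k+1}})-Z_kE_{i_{k+1}}=\chi(Z_k)+1-Z_kE_{i_{k+1}}\le\chi(Z_k)$ because $Z_kE_{i_{k+1}}\ge 1$; starting from $\chi=1$ this forces $\chi(Z_D)\le 1$, so $\chi(Z_D)=1$. The intersection bound then drops out: ellipticity gives $\chi(E_{min}+Z_D)\ge 0$, and bilinearity yields $0\le \chi(E_{min}+Z_D)=\chi(E_{min})+\chi(Z_D)-E_{min}Z_D=1-E_{min}Z_D$, so $E_{min}Z_D\le 1$; since $Z_D\ge D$ and $E_{min}E_i\ge 0$ for the (distinct) components $E_i\subset\supp(D)$, we conclude $E_{min}D\le E_{min}Z_D\le 1$.

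The bilinear bookkeeping and the various reductions are routine. The only genuinely delicate point I anticipate is the upper bound $\chi(Z_D)\le 1$, that is, the rationality of the configuration $\supp(D)$; this is exactly where the hypothesis that $D$ contains no component of $E_{min}$ is used, through the component-wise application of part (1) inside Laufer's algorithm. Everything else is essentially forced by $\chi\ge 0$ together with the uniqueness of $E_{min}$.
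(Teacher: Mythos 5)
Your proof is correct, but note that the paper offers no proof of this proposition at all: it is stated as a known fact with citations to Laufer, Wagreich, and Tomari, and your argument is essentially a faithful reconstruction of the classical proofs in those references (the minimal-element/uniqueness argument identifying $E_{min}$, the splitting argument for connectedness, and Laufer's computation sequence giving $\chi(Z_D)\le 1$ from the rationality of the components of $\supp(D)$). The only caveat is that you take $\chi(D)\ge 0$ for all $D>0$ as the definition of ellipticity rather than deriving it from $\chi(Z_E)=0$; this matches how the paper sets things up, since it likewise defers the implication $(2)\Rightarrow(1)$ of that definition to the literature.
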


\begin{lem}\label{l:repMin}
Assume that $A$ is an elliptic singularity and let $I \subset A$ be an ideal represented by a cycle $Z>0$ on $X$.
Let $X_0 \to \spec A$ be the minimal resolution and $\phi: X\to X_0$ the natural morphism. If $\chi(Z)=0$, then $I=I_Z$ can be represented on $X_0$ by $\phi_*Z$. 
\end{lem}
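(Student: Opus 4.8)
The plan is to contract $\phi$ one $(-1)$-curve at a time and to control the Euler characteristic at each step. First I would factor the natural morphism into a chain of single point blow-ups $X=Y_n\to Y_{n-1}\to\cdots\to Y_0=X_0$; since $A$ is normal, each intermediate $Y_i$ is again a resolution of $\Spec A$, so the elliptic hypothesis is available on every $Y_i$, and in particular the inequality $\chi(D)\ge 0$ for all effective cycles $D$ from \proref{p:ellchi} holds on each of them. This lets me induct on $n$ and reduce to the single contraction $\phi_1\colon X\to X_1:=Y_{n-1}$ collapsing one $(-1)$-curve $E$ to a point.

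The heart of the argument is a computation of how $\chi$ changes under this contraction. Setting $W_1=\phi_{1*}Z$ and $m=-ZE$, which is $\ge 0$ because $Z$ is anti-nef, I would write $Z=\phi_1^*W_1+mE$ and, using $K_X=\phi_1^*K_{X_1}+E$, $E^2=-1$ and $\phi_1^*D\cdot E=0$, obtain
\[
\chi(Z)=\chi(W_1)+\frac{m(m+1)}{2}.
\]
Ellipticity then forces the conclusion: if $W_1>0$ then $\chi(W_1)\ge 0$, so $\chi(Z)=0$ gives $m=0$ and $\chi(W_1)=0$; and the case $W_1=0$ is impossible, since it would make $Z=mE$ with $\chi(Z)=m(m+1)/2>0$, contradicting $Z>0$. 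Hence $m=0$, i.e. $Z=\phi_1^*W_1$, and $W_1=\phi_{1*}Z$ is anti-nef by the projection formula, effective, and satisfies $\chi(W_1)=0$, so it is exactly the input the inductive hypothesis expects.

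It remains to read off that $Z=\phi_1^*W_1$ means $I$ is genuinely represented on $X_1$ by $W_1$. From $I\cO_X=\cO_X(-Z)=\phi_1^*\cO_{X_1}(-W_1)$ the identity $\phi_{1*}\cO_X=\cO_{X_1}$ gives $H^0(X_1,\cO_{X_1}(-W_1))=H^0(X,\cO_X(-Z))=I$, and the global generation of $\cO_X(-Z)$ by $I$ descends along the contracted $\PP^1$ to show that $I\cO_{X_1}=\cO_{X_1}(-W_1)$ is invertible; iterating down to $Y_0=X_0$ then represents $I$ by $\phi_*Z$. The main obstacle I anticipate is precisely this last translation---certifying that the numerical equality $Z=\phi_1^*W_1$ upgrades to an honest ideal-sheaf identity on $X_1$ (invertibility together with equality of global sections), along with the bookkeeping that $W_1$ stays strictly effective; once these are secured, the $\chi$-identity and the elliptic inequality close the induction immediately.
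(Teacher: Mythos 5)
Your argument is correct and rests on the same mechanism as the paper's: decompose $Z$ into a pullback from the minimal resolution plus a $\phi$-exceptional correction, check that $\chi$ splits additively across this decomposition, and invoke the elliptic inequality $\chi\ge 0$ (\proref{p:ellchi}) to force the correction to vanish, after which generation of $\cO_{X_0}(-\phi_*Z)$ descends because $Z$ is trivial on the contracted curves. The only genuine difference is the implementation. The paper does it in one shot: writing $Z=\phi^*Z_0+F$, it shows $\chi(Z)=\chi(Z_0)+\chi(F)$ and rules out $F\ne 0$ by observing that $F$ is supported on the $\phi$-exceptional locus, which contracts to smooth points, so $H^1(\cO_F)=0$ and $\chi(F)=h^0(\cO_F)>0$. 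You instead factor $\phi$ into single blow-downs and compute $\chi(Z)=\chi(W_1)+m(m+1)/2$ explicitly for one $(-1)$-curve; this trades the rationality input $H^1(\cO_F)=0$ for the (standard) factorization theorem plus the observation that ellipticity is available on the intermediate resolutions, and it makes the effectivity of the exceptional part ($m=-ZE\ge 0$) completely transparent. Two tiny points: in your $W_1=0$ case the contradiction is with $\chi(Z)=0$ (since $Z>0$ forces $m>0$, hence $\chi(Z)>0$), not with $Z>0$ itself; and normality of $A$ is not what makes the intermediate $Y_i$ resolutions --- smoothness together with properness and birationality over $\Spec A$ is all that is used. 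Neither affects the argument, and your flagged descent step (that $Z=\phi_1^*W_1$ upgrades to $I\cO_{X_1}=\cO_{X_1}(-W_1)$ with the same global sections) is exactly the point the paper also settles, by the same reasoning, in its final sentence.
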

\begin{proof}
Let $Z_0=\phi_*Z$, $F=Z-\phi^*Z_0$, and $K_{\phi}=K_X-\phi^*K_{X_0}$.
Then $K_{\phi}\equiv K_X$ on the $\phi$-exceptional set, namely, $(K_{\phi}-K_X)E_i=0$ for all $E_i\subset E$ such that $\phi(E_i)$ is a point.
Hence we have   
\[
Z^2+Z K_X = (\phi^*Z_0 + F)^2 + (\phi^*Z_0 + F)(\phi^*K_{X_0} + K_{\phi})
= Z_0^2 + Z_0 K_{X_0} + F^2 + F K_X.
\]
Since $F\equiv Z$ on the $\phi$-exceptional set and $Z$ is anti-nef, we have $F\ge 0$. 
Suppose that $F\ne 0$.
Then we have $H^1(\cO_F)=0$ and $\chi(Z)=\chi(Z_0)+\chi(F)$.
Since $A$ is elliptic and $\chi(Z)=0$, we have $\chi(Z_0)=\chi(F)=0$; however, it contradicts that $\chi(F)=h^0(\cO_F)>0$.
Therefore, $F=0$ and $ZC = 0$ for every $\phi$-exceptional curve $C$.
Hence $\cO_{X_0}(-Z_0)$ is generated.
\end{proof}
\par 
In the rest of this section, we always assume that  {\bf $A$ is Gorenstein elliptic singularity}.
By virtue of \lemref{l:repMin}, for the aim of this section, we may also assume that {\bf $f \colon X \to \Spec A$ is the minimal resolution}.

\begin{lem}\label{l:br=2} 
Under the assumption above, for  any $\m$-primary integrally closed ideal $I \subset A$, we have the following.
\begin{enumerate}
\item $\br(I) = 2$ if and only if $q(I) < p_g(A)$.  Namely, $I$ is an elliptic ideal if and only if it is not a $p_g$-ideal.
\item If $I$ is represented by a cycle on $X$, then $\br(I)=2$.
\end{enumerate}
\end{lem}
\begin{proof}
Since $A$ is an elliptic singularity, we have $\br(I)\le 2$ by \cite{Ok}.
Hence (1) follows from the definitions of  $p_g$-ideals and elliptic ideals.
By \cite[3.8 and 3.10]{OWY1}, $p_g$-ideals of $A$ cannot be represented on the minimal resolution. Hence (2) follows.
\end{proof}

The elliptic sequence for elliptic singularities introduced by S.S.T.~Yau \cite{yau.max} is a very useful tool.

\begin{defn}
We define the {\em elliptic sequence} $\{Z_0, \dots, Z_m\}$ 
on $X$ as follows.  
Let $B_0=E$ and $Z_{0}=Z_{B_0}$. If $Z_0 E_{min}<0$, then the elliptic
 sequence is $\{Z_{0}\}$.
If $Z_0, \dots, Z_i$ are determined and $Z_{i}   E_{min}=0$, then define $B_{i+1}$ to be the
 maximal reduced connected cycle containing $\supp(E_{min})$ such that
 $Z_{i}  B_{i+1}=0$, and let $Z_{i+1}=Z_{B_{i+1}}$.
If we have $Z_{m}   E_{min}<0$ for some $m\ge 0$, then the elliptic
 sequence is $\{Z_{0}, \dots ,Z_{m}\}$.
\par 
For $0 \le t \le m$, we define cycles $C_t$ and $C'_t$ by
$$
C_t=\sum _{i=0}^tZ_{i} \quad \text{and} \quad
C_t'=\sum _{i=t}^mZ_{i}.
$$
We put $C_{-1}=C'_{m+1}=0$.
\end{defn}

\begin{ex}\label{e:244}
Assume that $\chara(k)\ne 2$.
Let $R=k[x,y,z]_{(x,y,z)}$
and put $A=R/(x^2+y^4+z^{4m+4})$
(cf. \exref{ex:Brieskorn} (2)(b)). 
Then $A$ is elliptic (see \cite[4.4]{OWY4}) and its resolution graph is as in \figref{fig:244}, where $E_{m}$ is an elliptic curve and other components are nonsingular rational curves.
Then $Z_i=E_m+\sum_{j=i}^{m-1}(E_{i}^{(1)}+E_{i}^{(2)})$.
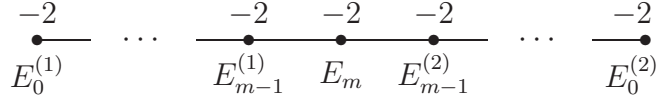
\begin{figure}[htb]
\begin{picture}(240,20)(30,15)
\put(35,25){\circle*{4}}
\put(35,25){\line(1,0){20}}
\put(75,25){\makebox(0,0){$\cdots$}}
\put(95,25){\line(1,0){20}}
\put(115,25){\circle*{4}}
\put(115,25){\line(1,0){90}}
\put(150,25){\circle*{4}}
\put(185,25){\circle*{4}}
\put(225,25){\makebox(0,0){$\cdots$}}
\put(245,25){\line(1,0){20}}
\put(265,25){\circle*{4}}
\put(35,35){\makebox(0,0){$-2$}}
\put(115,35){\makebox(0,0){$-2$}}
\put(150,35){\makebox(0,0){$-2$}}
\put(185,35){\makebox(0,0){$-2$}}
\put(260,35){\makebox(0,0){$-2$}}
\put(150,12){\makebox(0,0){$E_{m}$}}
\put(115,12){\makebox(0,0){$E_{m-1}^{(1)}$}}
\put(185,12){\makebox(0,0){$E_{m-1}^{(2)}$}}
\put(260,12){\makebox(0,0){$E_{0}^{(2)}$}}
\put(35,12){\makebox(0,0){$E_{0}^{(1)}$}}
\end{picture}
\caption{The resolution graph of $A=R/(x^2+y^4+z^{4m+4})$ }\label{fig:244}
\end{figure}
\end{ex}

\begin{prop}[cf. {\cite[\S 2]{nem.ellip}}]\label{p:seq}
We have the following properties$:$
\begin{enumerate}
 \item $B_0>B_1>\dots >B_m$, $Z_{m}=E_{min}$.
 \item $Z_{i} Z_{j}=0$ for $i\not=j$ and $-Z_0^2\ge \cdots \ge -Z_m^2$. 
 \item $C_t$ is anti-nef for 
 $0$  $\le t \le m$.
 \item For $0 \le t \le m$, $C'_t$ is the canonical cycle on $B_t$, namely, $(K_X+C'_t)E_i=0$ for every $E_i\le B_t$. 
 \item $\chi(C_t)=\chi(C'_t)=\chi(Z_{t})=0$ for $0\le t \le m$.
\end{enumerate}
\end{prop}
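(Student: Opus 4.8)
The plan is to reduce every assertion to the single identity that $C'_0=\sum_{i=0}^m Z_i$ is the canonical cycle $Z_K$ of $X$, and then read off the remaining statements by intersection theory. Two standing facts will be used repeatedly. First, since $f$ is the minimal resolution there are no $(-1)$-curves, so $K_XE_i=-E_i^2-2+2p_a(E_i)\ge 0$ for every $i$; that is, $K_X$ is nef. Second, because $A$ is Gorenstein the canonical cycle $Z_K$ is an honest (integral) cycle characterized by $(K_X+Z_K)E_i=0$ for all $i$, and nefness of $K_X$ gives $Z_KE_i=-K_XE_i\le 0$, so $Z_K$ is anti-nef with full support.

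First I would settle the structural claim (1) and the orthogonality half of (2). Since $Z_i$ is the fundamental cycle of $B_i$ it satisfies $Z_i^2<0$ by negative definiteness, so $Z_iE_j<0$ for at least one $E_j\le B_i$; as $Z_i$ is numerically trivial on $B_{i+1}$ this forces $E_j\not\le B_{i+1}$, whence $B_{i+1}\subsetneq B_i$ and the sequence is finite. At the last step Laufer's description of the minimally elliptic cycle \cite{La} identifies $B_m$ with $\supp(E_{min})$ and $Z_m$ with $E_{min}$. Moreover $Z_i$, being numerically trivial (hence anti-nef) on $B_{i+1}$ with full support there, dominates the fundamental cycle $Z_{i+1}$, so $Z_{i+1}\le Z_i$. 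For orthogonality, if $i<j$ then $\supp(Z_j)=B_j\subseteq B_{i+1}$ and $Z_i$ vanishes on $B_{i+1}$, so $Z_iZ_j=\sum_{E\le B_j}(\mult_E Z_j)(Z_iE)=0$.

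The crux is (4), which I would prove by the peeling induction of Yau \cite{yau.max}. Writing $Z_K^{(t)}$ for the canonical cycle of the sub-configuration $B_t$, the claim is $Z_K^{(t)}=Z_t+Z_K^{(t+1)}$, with base case $Z_K^{(m)}=E_{min}$ supplied by Laufer's theorem that a minimally elliptic cycle is its own canonical cycle, i.e. $(K_X+E_{min})E_i=0$ on $\supp(E_{min})$. The easy half of the inductive step is immediate: for $E\le B_{t+1}$ one has $(K_X+Z_t+Z_K^{(t+1)})E=Z_tE=0$, since $Z_t$ is trivial on $B_{t+1}$ and $Z_K^{(t+1)}$ is canonical there. \emph{The main obstacle} is the propagation of the canonical equation to the newly peeled components $E\le B_t\setminus B_{t+1}$, equivalently showing that the effective cycle $Z_K^{(t)}-Z_t$ (which is $\ge 0$ because $Z_K^{(t)}$ is anti-nef with full support and $Z_t$ is its fundamental cycle) is supported on $B_{t+1}$. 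Here I would exploit the maximality in the definition of $B_{t+1}$ together with \proref{p:ellchi}(2), which bounds $E_{min}\cdot C\le 1$ for a connected reduced cycle $C$ avoiding $E_{min}$, to pin down the coefficients of $Z_K^{(t)}$ off $B_{t+1}$ and match them with those of $Z_t$. Granting this, $C'_t=Z_K^{(t)}$ for all $t$; in particular $C'_0=Z_K$.

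The remaining items then follow formally. For (5), a canonical cycle $D$ on $B_t$ satisfies $(K_X+D)D=0$, hence $\chi(C'_t)=-\tfrac12((C'_t)^2+C'_tK_X)=0$; combined with $\chi(C'_t)=\chi(Z_t)+\chi(C'_{t+1})-Z_tC'_{t+1}$ and $Z_tC'_{t+1}=0$ (orthogonality) this gives $\chi(Z_t)=0$, and then $\chi(C_t)=\sum_{i\le t}\chi(Z_i)-\sum_{i<j\le t}Z_iZ_j=0$. For the anti-nef claim (3) I would write $C_t=Z_K-C'_{t+1}$: for $E_i\le B_{t+1}$ both canonical equations give $C_tE_i=-K_XE_i-(-K_XE_i)=0$, while for $E_i\not\le B_{t+1}$ nefness gives $-K_XE_i\le 0$ and effectivity of $C'_{t+1}$ gives $C'_{t+1}E_i\ge 0$, so $C_tE_i=-K_XE_i-C'_{t+1}E_i\le 0$. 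Finally, for the monotonicity in (2), $\chi(Z_i)=0$ yields $-Z_i^2=K_XZ_i$, so $(-Z_i^2)-(-Z_{i+1}^2)=K_X(Z_i-Z_{i+1})\ge 0$ because $K_X$ is nef and $Z_i-Z_{i+1}\ge 0$ by the first step. Thus the entire proposition rests on the single support-matching step inside the peeling induction for (4).
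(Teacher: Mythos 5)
The paper gives no proof of this proposition at all --- it is imported wholesale from N\'emethi's \S 2 (and ultimately from Yau's work on maximally elliptic singularities) --- so there is no internal argument to measure yours against line by line. Your overall strategy is the standard one: reduce everything to the identity $C'_0=Z_K$ and then read off (2), (3) and (5) by intersection theory. Those formal deductions (the computation $\chi(C'_t)=0$ from $(K_X+C'_t)C'_t=0$, the additivity $\chi(Z_t)=\chi(C'_t)-\chi(C'_{t+1})$, the anti-nefness of $C_t=Z_K-C'_{t+1}$ from nefness of $K_X$, and the monotonicity $-Z_i^2=K_XZ_i$) are all correct given (1), (2) and (4).

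The genuine gap is exactly where you flag it, and it cannot be granted: the support-matching step $\supp\bigl(Z_K^{(t)}-Z_t\bigr)\subseteq B_{t+1}$ in the peeling induction \emph{is} the content of the proposition, not a verification. Knowing $Z_K^{(t)}-Z_t\ge 0$ and checking the canonical equation on $B_{t+1}$ says nothing about the components of $B_t\setminus B_{t+1}$, where the coefficients of $Z_K^{(t)}$ and $Z_t$ must be shown to agree; ``exploit the maximality of $B_{t+1}$ together with \proref{p:ellchi}(2) to pin down the coefficients'' is a plan, not an argument, and in Yau's and N\'emethi's treatments this is precisely where computation sequences and the (numerically) Gorenstein hypothesis do real work. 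A second gap of the same kind sits in your base case: $B_m=\supp(E_{min})$ and $Z_m=E_{min}$ is not ``Laufer's description of the minimally elliptic cycle.'' Laufer gives $E_{min}=Z_{\supp(E_{min})}$ and $\chi(E_{min})=0$; the assertion that the elliptic sequence terminates exactly at $\supp(E_{min})$, rather than at some strictly larger $B_m$ with $Z_{B_m}E_{min}<0$, is again part of what the cited sources prove. Since both the base case and the inductive step of your induction for (4) are deferred, what you have is an accurate outline of the known proof rather than a proof; for the purposes of this paper the honest fix is simply to cite \cite{nem.ellip} and \cite{yau.max}, as the authors do.
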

\par 
Since $A$ is Gorenstein, the canonical divisor $K_X$ is linearly equivalent to a cycle.
By \proref{p:seq} (4), we may assume that 
\begin{equation}\label{eq:Cm}
-K_X =C'_0=C_m.
\end{equation}
\par
The cycles  $C_t$ are characterized as follows.

\begin{prop} [{Tomari \cite[Theorem (6.4)]{T},
 N{\'e}methi \cite[2.13]{nem.ellip}}]
\label{p:char}
Let $D \ge 0$ be a cycle.
If $D \le C_m$ and $D$ is anti-nef, then $D=C_t$ for some $-1 \le t \le m$. 
\end{prop}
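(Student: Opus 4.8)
The plan is to identify the anti-nef cycles lying below $C_m$ by a downward recursion along the nested configurations $B_0 \supset B_1 \supset \cdots \supset B_m$, peeling off one term $Z_s$ of the elliptic sequence at a time. Before describing it, it is worth recording the conceptual reason the $C_t$ are the only candidates. Since $A$ is Gorenstein with $-K_X = C_m$, every cycle satisfies $\chi(D) = -\tfrac{D^2 + DK_X}{2} = \tfrac{1}{2}D(C_m - D)$. If $0 \le D \le C_m$ is anti-nef, then writing $C_m - D = \sum_i c_i E_i$ with all $c_i \ge 0$ gives $D(C_m - D) = \sum_i c_i (DE_i) \le 0$, so $\chi(D) \le 0$; ellipticity forces $\chi(D) \ge 0$, whence $\chi(D) = 0$ and $D \ge E_{min}$ by \proref{p:ellchi}. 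This is consistent with $\chi(C_t) = 0$ and serves as a sanity check, but the exact identification $D = C_t$ will come from the recursion, which does not actually require this computation.

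The core of the argument is a single peeling step, to be applied for $s = 0, 1, \dots$. Suppose $\widetilde D$ is a cycle with $\supp(\widetilde D) \subseteq B_s$, with $0 \le \widetilde D \le C'_s = Z_s + \cdots + Z_m$, and with $\widetilde D E_i \le 0$ for all $E_i \le B_s$. If $\widetilde D = 0$ we stop. Otherwise, since $B_s$ is connected and the intersection form on it is negative definite, Artin's minimal-cycle property gives $\widetilde D \ge Z_{B_s} = Z_s$; set $\widetilde D{}' = \widetilde D - Z_s$. Writing $\widetilde D{}' = C'_{s+1} - (C'_s - \widetilde D)$ and using $C'_s - \widetilde D \ge 0$ shows $0 \le \widetilde D{}' \le C'_{s+1}$, and since $\supp(C'_{s+1}) = B_{s+1}$ by \proref{p:seq}~(1) we obtain $\supp(\widetilde D{}') \subseteq B_{s+1}$. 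Moreover $Z_s B_{s+1} = 0$ by the definition of the elliptic sequence, and $Z_s = Z_{B_s}$ is anti-nef on $B_s$, so $Z_s E_i = 0$ for every $E_i \le B_{s+1}$; hence $\widetilde D{}' E_i = \widetilde D E_i \le 0$ for such $E_i$. Thus $\widetilde D{}'$ again satisfies the three hypotheses with $s$ replaced by $s+1$.

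Starting from $\widetilde D = D$ — which satisfies the hypotheses at $s = 0$ because $\supp(D) \subseteq B_0$, $D$ is anti-nef, and $D \le C'_0 = C_m$ — this recursion must terminate after finitely many steps, either by reaching $\widetilde D = 0$ once $Z_0, \dots, Z_t$ have been peeled, which gives $D = Z_0 + \cdots + Z_t = C_t$, or by arriving at $B_m = \supp(E_{min})$ with $0 \le \widetilde D \le Z_m = Z_{B_m}$, where minimality forces $\widetilde D \in \{0, Z_m\}$ and hence $D = C_{m-1}$ or $D = C_m$. Together with the trivial case $D = 0 = C_{-1}$, this yields $D = C_t$ for some $-1 \le t \le m$. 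The main obstacle is to make the inductive framework airtight: one must know that each truncation $B_{s+1}$ is genuinely the next term of the elliptic sequence, that $C'_{s+1}$ has support exactly $B_{s+1}$ (so that $0 \le \widetilde D{}' \le C'_{s+1}$ really confines $\widetilde D{}'$ to $B_{s+1}$), and that Artin's minimal-cycle property may be invoked for each $B_s$. The last point is legitimate because every $B_s$ is connected with negative-definite intersection form, and the first two are exactly the structural facts furnished by \proref{p:seq} and the definition of the elliptic sequence; once these are granted, the peeling step is the routine numerical verification carried out above.
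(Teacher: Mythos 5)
Your argument is correct. Note, however, that the paper gives no proof of this proposition at all: it is quoted from Tomari and N\'emethi, so there is nothing internal to compare against. Your peeling recursion is essentially the standard argument behind the cited results: the two key inputs are (i) Artin's fundamental-cycle property on each connected, negative-definite configuration $B_s$, which upgrades ``$\widetilde D>0$, supported in $B_s$, anti-nef on $B_s$'' to ``$\widetilde D\ge Z_s$'', and (ii) the defining relation $Z_sB_{s+1}=0$ together with anti-nefness of $Z_s$ on $B_s$, which gives $Z_sE_i=0$ for every $E_i\le B_{s+1}$ and hence propagates the inductive hypotheses to $\widetilde D-Z_s$. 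The confinement $0\le \widetilde D-Z_s\le C'_{s+1}$ with $\supp(C'_{s+1})=B_{s+1}$ (from \proref{p:seq}) and the termination analysis at $s=m$ are all in order, and the trivial case $D=0=C_{-1}$ is accounted for. Your preliminary observation that $\chi(D)=\tfrac12 D(C_m-D)=0$ for such $D$ is a correct consistency check but, as you say, is not needed; the recursion is purely combinatorial and does not use the Gorenstein hypothesis. The one point worth making explicit if this were written up is the standard justification that a positive cycle supported in a connected configuration and anti-nef there has full support and dominates the fundamental cycle (the minimum of two anti-nef cycles is again anti-nef), but this is classical and your invocation of it is legitimate.
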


\begin{lem}[{\cite[2.13]{o.numGell}, cf. \cite[2.20]{nem.ellip}}]\label{l:fexists}
 For an integer  $t$
 such that $0 \le t \le m-1$, the  following conditions are equivalent$:$ 
\begin{enumerate}
 \item there exists a function $f\in H^0(\cO_X(-C_t))\setminus H^0(\cO_X(-C_{t+1}))$.
 \item $\cO_{C'_{t+1}}(-C_t) \cong \cO_{C'_{t+1}}$.
 \item $h^1(\cO_X(-C_t))=h^1(\cO_X(-C_{t+1}))+1$.
 \item $\dim _{k}H^0(\cO_X(-C_t))/H^0(\cO_X(-C_{t+1}))=1$.  
\end{enumerate}
If the conditions above are satisfied, then
 $h^1(\cO_X(-C_t))=h^1(\cO_{C'_{t+1}})$.
\end{lem}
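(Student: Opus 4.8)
The plan is to analyze everything through the short exact sequence
\[
0 \to \cO_X(-C_{t+1}) \to \cO_X(-C_t) \to \cO_{Z_{t+1}}(-C_t) \to 0,
\]
which is legitimate because $C_{t+1}-C_t=Z_{t+1}$. First I would record two numerical facts. For every component $E_i\le B_{t+1}$ and every $j\le t$ one has $B_{t+1}\subseteq B_{j+1}$, so $Z_jE_i=0$ by the defining property $Z_jB_{j+1}=0$ of the elliptic sequence; summing over $j$ gives $C_tE_i=0$. Together with $C_tZ_{t+1}=\sum_{j\le t}Z_jZ_{t+1}=0$ from \proref{p:seq}, this shows that $\cL:=\cO_{Z_{t+1}}(-C_t)$ has degree $0$ on every component of $B_{t+1}$ and satisfies $\chi(\cL)=\chi(Z_{t+1})=0$. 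Since $Z_{t+1}=Z_{B_{t+1}}$ is an elliptic cycle (it has $\chi=0$, hence $\ge E_{min}$ by \proref{p:ellchi}), I would invoke the standard dichotomy for a degree-$0$ line bundle on such a cycle: $h^0(\cL)=h^1(\cL)\in\{0,1\}$, the value being $1$ exactly when $\cL\cong\cO_{Z_{t+1}}$.

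Granting this, the equivalence of $(1)$, $(3)$, $(4)$ is a routine cohomology chase. Writing $a=\dim_kH^0(\cO_X(-C_t))/H^0(\cO_X(-C_{t+1}))$ and letting $\rho$ be the restriction map, the long exact sequence gives $a=\dim_k\Image\rho\le h^0(\cL)\le 1$, and, because $\chi(\cL)=0$ makes the Euler characteristics cancel, the clean identity $h^1(\cO_X(-C_t))=h^1(\cO_X(-C_{t+1}))+a$. Hence $(4)\Leftrightarrow a=1\Leftrightarrow(3)$, while $(1)$ asserts $a\ge 1$, which forces $a=1$; so $(1)\Leftrightarrow(3)\Leftrightarrow(4)$. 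I would remark that this part is insensitive to the characteristic.

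The real content is folding in condition $(2)$, and I would split it. For $(1)\Rightarrow(2)$, take $f$ as in $(1)$ and write $\cO_X(-C_t)\cong\cO_X(\Delta)$ with $\Delta:=\di(f)-C_t\ge 0$. Since $\di(f)E_i=0$ for every exceptional $E_i$ and $C_tE_i=0$ along $B_{t+1}$, the cycle $\Delta$ is anti-nef on $B_{t+1}$; the hypothesis $f\notin H^0(\cO_X(-C_{t+1}))$ forces $\Delta$ to have a zero coefficient on some component of $B_{t+1}$, and a connectedness argument using the minimality of the fundamental cycle then shows that $\Delta$ contains no component of $B_{t+1}$ at all. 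Consequently $\cO_{C'_{t+1}}(-C_t)\cong\cO_{C'_{t+1}}(\Delta)\cong\cO_{C'_{t+1}}$, which is $(2)$. For $(2)\Rightarrow(1)$, restricting $(2)$ to $Z_{t+1}\le C'_{t+1}$ gives $\cL\cong\cO_{Z_{t+1}}$, so $h^0(\cL)=1$; it then remains to lift the trivializing section to an honest global function, and here I would use the identity $C_t+C'_{t+1}=C_m=-K_X$ to rewrite the relevant sequence as $0\to\omega_X\to\cO_X(-C_t)\to\cO_{C'_{t+1}}(-C_t)\to 0$, so that the lifting obstruction lives in $H^1(\omega_X)$.

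The main obstacle is precisely this last step: upgrading the abstract triviality of $\cO_X(-C_t)$ on $C'_{t+1}$ to the existence of a function realizing it, equivalently the surjectivity of $H^0(\cO_X(-C_t))\to H^0(\cO_{C'_{t+1}})$. I expect to clear it with the vanishing $H^1(\omega_X)=0$ (together with $H^2(\omega_X)=0$), which also yields the final assertion at once: from the same $\omega_X$-sequence $h^1(\cO_X(-C_t))=h^1(\cO_{C'_{t+1}}(-C_t))$, and under $(2)$ the right-hand side is $h^1(\cO_{C'_{t+1}})$. Since the surrounding argument is meant to hold in all characteristics, I would finally check whether this vanishing can be arranged characteristic-freely, e.g. by Serre duality on the canonical cycle $C'_{t+1}$, exploiting $(K_X+C'_{t+1})E_i=0$ from \proref{p:seq} to trivialize $\omega_{C'_{t+1}}$.
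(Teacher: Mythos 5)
The paper itself gives no proof of this lemma --- it is imported from \cite[2.13]{o.numGell} as a black box --- so there is no in-paper argument to compare yours against; judged on its own terms, your reconstruction is essentially correct. The cohomology chase giving $(1)\Leftrightarrow(3)\Leftrightarrow(4)$ is right once you grant the dichotomy $h^0(\cO_{Z_{t+1}}(-C_t))=h^1(\cO_{Z_{t+1}}(-C_t))\in\{0,1\}$, with value $1$ exactly when the sheaf is trivial; be aware that this is genuine input (it needs $h^0(\cO_{Z_{t+1}})=1$ for the fundamental cycle of the connected support $B_{t+1}$, plus the fact that a numerically trivial invertible sheaf on such a cycle with a nonzero section is trivial), so it should be cited or proved rather than waved at. Your $(1)\Rightarrow(2)$ is sound: the part $D$ of $\Delta$ supported on $B_{t+1}$ satisfies $DE_i\le 0$ for every $E_i\le B_{t+1}$, so by connectedness and the minimality of the fundamental cycle either $D=0$ or $D\ge Z_{t+1}$, and the latter contradicts $f\notin H^0(\cO_X(-C_{t+1}))$; since moreover $\Delta E_i=0$, the cycle $\Delta$ is then actually disjoint from $\supp C'_{t+1}$, which is what the isomorphism in $(2)$ requires. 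The step you rightly single out as the crux, $(2)\Rightarrow(1)$, does close: with $-K_X=C_m=C_t+C'_{t+1}$ the kernel of restriction to $C'_{t+1}$ is $\omega_X$, and $H^1(X,\omega_X)=0$ holds for a resolution of a normal surface singularity in every characteristic --- it is dual to $H^0_E(X,\cO_X)=0$, the surface case of Grauert--Riemenschneider --- so the trivializing section lifts, and its lift generates $\cO_X(-C_t)$ along $Z_{t+1}$ and hence cannot lie in $H^0(\cO_X(-C_{t+1}))$. Your proposed fallback of ``Serre duality on $C'_{t+1}$'' is not the right tool for that vanishing; use the duality with $H^0_E(X,\cO_X)$ instead. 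The final identity $h^1(\cO_X(-C_t))=h^1(\cO_{C'_{t+1}})$ then falls out of the same $\omega_X$-sequence exactly as you say.
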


The condition (2) of \lemref{l:fexists} says that  $\cO_{C'_{t+1}}(-(Z_0+ \cdots + Z_t)) \cong \cO_{C'_{t+1}}$. 
In fact, our main result in this section will need the fact that $\cO_{C'_{t+1}}(-Z_i) \cong \cO_{C'_{t+1}}(-Z_j)$ for $0 \le i \le j \le t$ and these invertible sheaves have a finite order in the Picard group of $C'_{t+1}$ (see \proref{p:ordp_g}, \ref{p:maxell} and \ref{p:bsz^2-1}).

\begin{defn}\label{d:Af}
  We define a set  $\cA_f$ by
\[
\cA_f=\defset{i}{0 \le i \le m-1, \; \text{$i$ satisfies the conditions in \lemref{l:fexists}}} \cup \{m\}.
\]
We put  $\beta=\min \cA_f$.
\end{defn}

\begin{rem}\label{r:Af}
 We have $\# \cA_f = p_g(A)$.
In fact, if $\cA_f=\{\beta_1, \dots, \beta_{\ell}\}$, $\beta_i < \beta _{i+1}$, then 
\begin{gather*}
\ell_A\left(A/ H^0( \cO_X( -C_{\beta_i} )) \right) = i, \\ 
\ell_A\left(A/ H^0( \cO_X( -C_{\beta_{\ell}} )) \right) =
\ell_A\left(A/ H^0( \cO_X( -C_{m} )) \right) = p_g(A),
\end{gather*}
where the last equality follows from \eqref{eq:Cm}.  
\end{rem}

\begin{defn}\label{d:fix}
For an element $f\in H^0(\cO_X(-E))=\m$, let $(f)_E$ denote the exceptional part of the divisor $\di_X(f)$; hence $\di_X(f)-(f)_E$ is the proper transform of the divisor $\di_{\spec(A)}(f)$ on $X$.
\par
Let $W>0$ be an anti-nef cycle.
A cycle $Y > 0$ is called the {\em fixed part} of $\cO_X(-W)$ if $Y$ is the maximal cycle such that $H^0(\cO_X(-W-Y))=H^0(\cO_X(-W))$; in this case, $(f)_E=W+Y$ for a general element $f\in H^0(\cO_X(-W))$.  
An irreducible component of the fixed part $Y$ is 
called a {\em fixed component}.
\par 
Note that if $\cO_X(-W)$ has no fixed component in a cycle $D>0$ and $WD=0$, then $\cO_D(-W)\cong \cO_D$.
\end{defn}

\begin{rem}
From the conditions (1) and (2) of \lemref{l:fexists}, for $i\in \cA_f$, $\cO_X(-C_i)$ has no fixed component on $B_{i+1}$.
\end{rem}

\begin{lem} \label{l:bs}
Let $D$ be a nef divisor on $X$. Then we have the following.
\begin{enumerate}
\item $\cO_X(-C_m)$ has no fixed component in $B_m$.
If  $Z_m^2= -1$, $\cO_{Z_m}(-C_m)$ is not generated.
\item If $DZ_m=1$, then $\cO_{Z_m}(D)$ is not generated.
\item  If $\cO_X(D)$ has no fixed component in $B_m$ and is not generated, then there exists $0 \le t \le m$ and 
a nonsingular point $Q$ of the scheme $C'_t$ 
such that $DC'_t=1$ and $Q$ is a base point of $\cO_X(D)$.
\par 
In particular, if $\cO_X(-C_t)$ with $t\in \cA_f$ is not generated, then $Z_t^2=-1$.
\end{enumerate}
\end{lem}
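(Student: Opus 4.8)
The plan is to reduce all five assertions to the geometry of line bundles on the connected Gorenstein genus-one cycles $C'_t$. Indeed, by \proref{p:seq} each $C'_t$ (and in particular $Z_m=E_{min}$) satisfies $\chi(C'_t)=0$, so $C'_t$ has arithmetic genus one, and being the canonical cycle on $B_t$ its dualizing sheaf $\omega_{C'_t}$ is numerically trivial; for the minimally elliptic cycle one has in addition $\omega_{Z_m}\cong\cO_{Z_m}$ by Laufer. On such a cycle I shall only use Riemann--Roch in the form $\chi(\cL)=\deg\cL$ for a line bundle $\cL$, together with the elementary trichotomy for a \emph{nef} $\cL$: if $\deg\cL\ge 2$ then $\cL$ is generated by global sections; if $\deg\cL=1$ then $h^0(\cL)=1$ and $\cL$ has a single base point, located at a nonsingular point of the cycle; and if $\deg\cL=0$ then $\cL$ is generated exactly when it is trivial. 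Finally, since the exceptional intersection form is negative definite, every fundamental cycle has $Z_i^2<0$, hence $-Z_i^2\ge 1$; this integrality is what forces the value $-1$ at the end.

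For (2) I restrict to $Z_m$: $\cO_{Z_m}(D)$ is nef of degree $DZ_m=1$, so $h^1=0$ (by duality, $\omega_{Z_m}\otimes\cO_{Z_m}(-D)$ has negative degree and no sections) and $h^0=1$ by Riemann--Roch; the unique section of a positive-degree bundle must vanish, so $\cO_{Z_m}(D)$ is not generated. Part (1) then follows from the same circle of ideas once I note, by adjunction and $\omega_{Z_m}\cong\cO_{Z_m}$, that $\cO_{Z_m}(-C_m)\cong\cO_{Z_m}(-Z_m)$ (as $-C_m=K_X$); this is nef of degree $-Z_m^2\ge 1$. The non-generation clause of (1) is exactly the case $-Z_m^2=1$ of (2). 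For the no-fixed-component clause I would show the restriction $H^0(\omega_X)\to H^0(\cO_{Z_m}(-C_m))$ is surjective---using the vanishing $H^1(\omega_X\otimes\cO_X(-Z_m))=0$ for the nef twist $-Z_m$, via Grauert--Riemenschneider---and then transport the absence of a fixed component from the positive-degree bundle $\cO_{Z_m}(-C_m)$ on $Z_m$ up to $\omega_X=\cO_X(-C_m)$ on $B_m$.

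For (3) I would descend the nested filtration $C'_m\le C'_{m-1}\le\cdots\le C'_0=-K_X$, along which $DC'_0\ge\cdots\ge DC'_m\ge 0$ because $DC'_t=DZ_t+DC'_{t+1}$ with $D$ nef. At each stage $\cO_{C'_t}(D)$ is nef of degree $DC'_t$, and the trichotomy applies: degree $\ge 2$ gives generation on $C'_t$; degree $0$ must give the trivial sheaf, for otherwise $\cO_{C'_t}(D)$ has no sections, all sections of $\cO_X(D)$ vanish along $C'_t\supseteq Z_m$, and $B_m$ acquires a fixed component against the hypothesis; degree $1$ produces a base point at a nonsingular point of $C'_t$. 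Bootstrapping generation up the filtration (so that generation on $C'_0=-K_X$, whose support is all of $E$, yields generation on $X$), one sees that if no stage had $DC'_t=1$ then $\cO_X(D)$ would be generated; since it is not, some $DC'_t=1$ and the base point there is the desired one. For the final assertion I apply this with $D=-C_t$, which is nef as $C_t$ is anti-nef (\proref{p:seq}): for $t\in\cA_f$ the remark following \lemref{l:fexists} gives no fixed component on $B_{t+1}\supseteq B_m$, so (3) applies and produces an index $s$ with $-C_tC'_s=1$; since $Z_iZ_j=0$ for $i\ne j$ this intersection equals $\sum_{i=s}^{t}(-Z_i^2)$, and as each summand is $\ge 1$ the value $1$ forces $s=t$ and $-Z_t^2=1$, i.e. $Z_t^2=-1$.

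The main obstacle is the interface between cohomology on the cycles $C'_t$ and on $X$. Concretely, the descent in (3) requires that generation of $\cO_{C'_t}(D)$ propagate to generation along $B_t$, and that in the degree-one stage the base point detected on $C'_t$ genuinely remain a base point of $\cO_X(D)$ rather than being resolved by sections living on a larger cycle; both hinge on the vanishing of $H^1$ of the twists $\cO_X(D-C'_t)$, which I would derive from Grauert--Riemenschneider together with the numerical triviality of $\omega_{C'_t}$ and the connectedness of the genus-one cycles. Verifying that the base point lands at a \emph{nonsingular} point of the possibly non-reduced scheme $C'_t$, and pinning down the unique stage where the degree equals one, is where the argument demands the most care.
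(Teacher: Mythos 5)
The paper does not prove this lemma from scratch: it cites \cite[3.3]{o.numGell} for (1), \cite[4.23 Lemma]{Re} for (2), and \cite[3.1]{o.numGell} for (3). Your attempt to reconstruct those inputs gets the easy parts right --- part (2) on $Z_m$, the reduction of the non-generation clause of (1) to (2) via $\omega_{Z_m}\cong\cO_{Z_m}$ and $-C_m=K_X$, and the final numerical step $-C_tC'_s=\sum_{i=s}^{t}(-Z_i^2)=1\Rightarrow s=t$ and $Z_t^2=-1$ --- but it has two concrete gaps exactly where the cited results do the work.

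First, in (1) you invoke ``the vanishing $H^1(\omega_X\otimes\cO_X(-Z_m))=0$ for the nef twist $-Z_m$, via Grauert--Riemenschneider.'' The divisor $-Z_m$ is \emph{not} nef on $X$: for any component $E_i\not\le B_m$ meeting $B_m$ one has $Z_mE_i>0$, hence $(-Z_m)E_i<0$. So neither Grauert--Riemenschneider nor R\"ohr's vanishing applies, and the surjectivity you need to rule out fixed components of $\omega_X$ in $B_m$ is not established. Second, and more seriously, your ``elementary trichotomy'' for nef line bundles is only valid on the minimally elliptic cycle $Z_m$ (which is numerically $2$-connected and has trivial dualizing sheaf --- that is Reid's 4.23). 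For $t<m$ the cycle $C'_t$ is not even numerically $1$-connected: by \proref{p:seq}(2), $Z_m\cdot(C'_t-Z_m)=0$, so a nef bundle of total degree $\ge 2$ on $C'_t$ can restrict to a nontrivial degree-$0$ bundle on $Z_m$ and fail to be generated there; moreover $\omega_{C'_t}=\cO_{C'_t}(-C_{t-1})$ is only \emph{numerically} trivial (it is trivial precisely when $t-1\in\cA_f$), so your duality step $h^1(\cL)=h^0(\cL^{-1})$ is not available. Consequently the degree-$\ge 2$ generation claim, the degree-$1$ base-point localization at a nonsingular point, and the bootstrapping along the filtration $C'_m\le\cdots\le C'_0$ are all unproved; this is precisely the content of \cite[3.1]{o.numGell}, which cannot be replaced by the stated trichotomy.
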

\begin{proof}
The claim (1) follows from (1) and (2) of \cite[3.3]{o.numGell}.
The claim (2) follows from \cite[4.23 Lemma]{Re}.
The assumption of (3) implies that $\cO_X(D)$ has only finitely many base points in $B_m$. Hence (3) follows from \cite[3.1]{o.numGell}.
\end{proof}

\begin{thm}\label{t:ZcA}
Let $Z>0$ be an anti-nef cycle on $X$.
Assume that $\cO_X(-Z)$ is generated 
 and  $\chi(Z)=0$.
Then $Z\in \defset{C_t}{ t\in \cA_f}$. \par
\end{thm}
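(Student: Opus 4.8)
The plan is to deduce $Z\in\{C_t\mid t\in\cA_f\}$ in two stages: first show that $Z$ must coincide with one of the cycles $C_t$ of the elliptic sequence, using only that $Z$ is anti-nef with $\chi(Z)=0$; then invoke the extra hypothesis that $\cO_X(-Z)$ is generated to force $t\in\cA_f$. Throughout I use that $A$ is Gorenstein, so $C_m=-K_X$ is anti-nef with $\chi(C_m)=0$ (\proref{p:seq}), and that ellipticity gives $\chi(D)\ge 0$ for every cycle $D>0$ (\proref{p:ellchi}). The Riemann--Roch formula $\chi(D)=-(D^2+DK_X)/2$ yields the additivity rule $\chi(D_1+D_2)=\chi(D_1)+\chi(D_2)-D_1D_2$, which will be the main computational tool.

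The heart of the argument is the claim $Z\le C_m$. I would compare $Z$ with $C_m$ through $m'=\min(Z,C_m)$ and $M=\max(Z,C_m)$, together with the effective cycles $A=(Z-C_m)^+$ and $B=(C_m-Z)^+$, which have disjoint support and satisfy $Z=m'+A$, $C_m=m'+B$, and $M=m'+A+B=C_m+A$. Since $m'+M=Z+C_m$ as cycles, the additivity rule gives $\chi(m')+\chi(M)=\chi(Z)+\chi(C_m)-AB=-AB$; as $A,B\ge 0$ have disjoint support one has $AB\ge 0$, while $m',M\ge E_{min}>0$ force $\chi(m'),\chi(M)\ge 0$ by ellipticity. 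Hence $AB=0$ and $\chi(m')=\chi(M)=0$. Now $m'$ is anti-nef (the minimum of two anti-nef cycles is anti-nef), satisfies $0<m'\le C_m$ and $\chi(m')=0$, so \proref{p:char} gives $m'=C_s$ for some $s$. It remains to kill $A$: assuming $A>0$, applying the additivity rule to $M=C_m+A$ and using $\chi(M)=\chi(C_m)=0$ yields $\chi(A)=C_mA$; since $C_m$ is anti-nef this is $\le 0$, while ellipticity makes it $\ge 0$, so $\chi(A)=C_mA=0$. Then $A^2=-AK_X=AC_m=0$, and negative-definiteness of the intersection form on the exceptional cycles forces $A=0$, a contradiction. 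Therefore $Z=m'=C_s\le C_m$, with $0\le s\le m$.

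Writing $t=s$, it remains to show $t\in\cA_f$. If $t=m$ this is immediate from \defref{d:Af}. If $t<m$, I would verify condition $(2)$ of \lemref{l:fexists}, namely $\cO_{C'_{t+1}}(-C_t)\cong\cO_{C'_{t+1}}$. Since $Z_iZ_j=0$ for $i\ne j$ (\proref{p:seq}), we have $C_tC'_{t+1}=\sum_{i\le t}\sum_{j\ge t+1}Z_iZ_j=0$. Moreover $\cO_X(-Z)=\cO_X(-C_t)$ is globally generated, hence base-point free, hence has no fixed component at all, in particular none in $C'_{t+1}$. By the remark following \defref{d:fix}, the vanishing $C_tC'_{t+1}=0$ together with the absence of a fixed component in $C'_{t+1}$ gives $\cO_{C'_{t+1}}(-C_t)\cong\cO_{C'_{t+1}}$, which is exactly \lemref{l:fexists}$(2)$. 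Thus $t\in\cA_f$, completing the proof.

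I expect the crux to be the inequality $Z\le C_m$ in the middle paragraph: once $Z$ is known to be an anti-nef cycle bounded by $C_m=-K_X$, the geometric characterization \proref{p:char} identifies it as some $C_t$ almost for free, and the passage from ``$\cO_X(-Z)$ generated'' to ``$t\in\cA_f$'' is a routine application of the fixed-component criterion. The delicate point is the $\min/\max$ bookkeeping with the Euler characteristic, where one must simultaneously control $\chi(m')$, $\chi(M)$ and the cross term $AB$, and then rule out any overflow $A=(Z-C_m)^+$ by combining $\chi(A)=0$ with the negative-definiteness of the exceptional intersection form.
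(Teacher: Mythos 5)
Your proof is correct, and while the overall strategy coincides with the paper's --- compare $Z$ with $C_m=-K_X$, exploit the additivity $\chi(D_1+D_2)=\chi(D_1)+\chi(D_2)-D_1D_2$ together with $\chi\ge 0$ from ellipticity to force $Z\le C_m$, then identify $Z$ with some $C_t$ via \proref{p:char} --- the execution of the key inequality differs in a genuine way. The paper argues in two cases: for $Z\ge C_m$ it identifies the excess $Y=Z-C_m$ with part of the fixed divisor of $\cO_X(-C_m)$ by a length count, gets $\chi(Y)=YC_m\le 0$ from additivity, and contradicts $\chi(Y)>0$ supplied by \lemref{l:bs}~(1); for $Z\not\le C_m$ it uses the gcd $G=C_t$ and $\chi(F)>0$ from \proref{p:ellchi}. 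Your single $\min/\max$ computation replaces both cases: from $\chi(m')+\chi(M)=-AB\le 0$ you get $AB=0$ and $\chi(M)=0$, and the excess is killed by $\chi(A)=C_mA=0$, hence $A^2=-AK_X=AC_m=0$ and negative-definiteness of the intersection form. This is self-contained (no appeal to \lemref{l:bs}) and arguably cleaner. You also spell out the final step that generation of $\cO_X(-C_t)$ forces $t\in\cA_f$, via condition (2) of \lemref{l:fexists} and the note after \defref{d:fix}; the paper leaves this implicit in its closing sentence, where \proref{p:char} alone only yields $Z=C_t$ for some $t$. Two minor points: your cycle $A=(Z-C_m)^+$ clashes notationally with the ring $A$ and should be renamed; and for $\chi(m')\ge 0$ all you need is $m'>0$, which holds because any nonzero anti-nef cycle on the connected exceptional set has full support (the invocation of $E_{min}$ is unnecessary).
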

\begin{proof}
 Note that $\ell_A(A/H^0(\cO_X( -C_m ) ) )=p_g(A)$ and that  
$\ell_A(A/I_Z)=p_g(A)-q(I_Z) \le p_g(A)$ (cf. the proof of \thmref{Main}).
\par
First we consider the case that $Z \ge C_m$. If $Z = C_m$, we have nothing to say. 
Hence assume that $Z = C_m +Y$ with $Y > 0$. Then since $\ell_A(A/I_Z)  
\ge \ell_A(A/H^0(\cO_X( -C_m ) ) )=p_g(A)$, we have $\ell_A(A/I_Z) = p_g(A)$ 
and $q(I_Z) =0$.
Moreover,
since $\HH^0(X, \cO_X(-C_m)) 
= \HH^0(X, \cO_X( -Z))$, 
$Y$ consists of some fixed components of $\cO_X(-C_m)$.  
Also, we have 
\[0 = \chi(Z) = \chi(C_m) + \chi(Y) - Y C_m. \]
Since $\chi(C_m)=0$ and $C_m=-K_X$ is anti-nef (cf. \proref{p:seq} (3)), we have $\chi(Y) = Y C_m \le 0$.  
But from Lemma \ref{l:bs} (1), 
we must have $\chi(Y) > 0$. 
 This contradiction shows that $Z = C_m$.
\par 
Next, we will show that $Z < C_m$ if $Z\ne C_m$.  
\par
Assume that $Z \not\le C_m$ and let $G=\gcd(Z, C_m)$, namely, $G$ is the maximal 
cycle with the property that $G\le Z$ and 
$G\le C_m$.
 Since both $Z, C_m$ are anti-nef, so is $G$.
 Put $Z = G + F$. 
Then $G=C_t$ for some $0\le t < m$ by \proref{p:char}, and $\chi(F) > 0$ by \proref{p:ellchi} (1) since $F\not \ge Z_m$.
Hence
\[\chi(Z) = \chi(F+G) = \chi(F) + \chi(G) - FG  = \chi(F) - FG > 0.
 \]  
However, this contradicts our assumption
 that $\chi(Z) = 0$. 
 Hence we have $Z \le C_m$ and our conclusion follows from 
 Lemma \ref{p:char}.
\end{proof}

\begin{cor}\label{c:ellGG2}
Assume that $A$ is an elliptic Gorenstein singularity.
\begin{enumerate}
\item If $t\in \cA_f$ and $-Z_t^2\ge 2$, then $\cO_X(-C_t)$ is generated and $\overline{G}(I_{C_t})$ is Gorenstein.  
\item Let  $\zeta$ be the number of elliptic ideals $I \subset A$ such that $\overline{G}(I)$ is Gorenstein.
Then $\zeta \le p_g(A)$, and the equality holds if and only if $-Z_m^2\ge 2$.
\end{enumerate}
\end{cor}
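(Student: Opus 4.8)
The plan is to deduce Corollary \ref{c:ellGG2} directly from the combination of Theorem \ref{Main} and Theorem \ref{t:ZcA}, treating the two parts in turn. For part (1), recall that by \thmref{Main}, for a Gorenstein elliptic singularity and an elliptic ideal $I_{C_t}$, the normal tangent cone $\overline{G}(I_{C_t})$ is Gorenstein precisely when $\chi(C_t)=0$. By \proref{p:seq}(5) we already know $\chi(C_t)=0$ for all $0 \le t \le m$, so the defining numerical condition of \thmref{Main}(5) is automatically satisfied. The only thing I must additionally verify is that $I_{C_t}$ is genuinely an elliptic ideal (i.e.\ $\br(I_{C_t})=2$) and not a $p_g$-ideal, since \thmref{Main} presupposes $\br(I)=2$. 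Here I would use that $C_t$ is anti-nef (\proref{p:seq}(3)) with $\chi(C_t)=0$, so $q(I_{C_t})=p_g(A)-\ell_A(A/I_{C_t})$ via Kato's formula, which forces $q(I_{C_t})<p_g(A)$; combined with the hypothesis that $A$ is elliptic (so every ideal is a $p_g$-ideal or elliptic by \cite{Ok}), this pins down $\br(I_{C_t})=2$. The hypothesis $-Z_t^2 \ge 2$ enters through \lemref{l:bs}(3): its last sentence says that if $\cO_X(-C_t)$ with $t\in\cA_f$ is \emph{not} generated then $Z_t^2=-1$; contrapositively, $-Z_t^2\ge 2$ together with $t\in\cA_f$ guarantees $\cO_X(-C_t)$ is generated, so $C_t$ legitimately represents an ideal and \thmref{Main} applies.

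For part (2), the strategy is a counting argument. By \thmref{t:ZcA}, every anti-nef cycle $Z$ with $\cO_X(-Z)$ generated and $\chi(Z)=0$ lies in $\defset{C_t}{t\in\cA_f}$, and by \thmref{Main} these are exactly the cycles whose ideals have Gorenstein normal tangent cone (among elliptic ideals). Since $\#\cA_f=p_g(A)$ by \remref{r:Af}, this immediately gives the bound $\zeta \le p_g(A)$. The subtlety is that not every $t\in\cA_f$ need give a \emph{generated} $\cO_X(-C_t)$: by \lemref{l:bs}(3), failure of generation for $t\in\cA_f$ forces $Z_t^2=-1$. So I would argue that $\zeta = \#\{t\in\cA_f : \cO_X(-C_t)\text{ is generated}\}$, and each $t\in\cA_f$ with $-Z_t^2\ge 2$ contributes by part (1).

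The main work is then characterizing when equality $\zeta=p_g(A)$ holds, i.e.\ when \emph{every} $t\in\cA_f$ yields a generated sheaf. The plan is to show this is equivalent to $-Z_m^2\ge 2$. One direction: if $-Z_m^2\ge 2$, then by \proref{p:seq}(2) we have $-Z_0^2\ge\cdots\ge -Z_m^2\ge 2$, so $-Z_t^2\ge 2$ for \emph{all} $t$, and hence by \lemref{l:bs}(3) every $\cO_X(-C_t)$ with $t\in\cA_f$ is generated; thus all $p_g(A)$ members of $\cA_f$ contribute and $\zeta=p_g(A)$. For the converse, suppose $-Z_m^2=1$. Here I expect the main obstacle: I must exhibit at least one $t\in\cA_f$ for which $\cO_X(-C_t)$ fails to be generated, thereby strictly reducing the count. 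The natural candidate is $t=m$, using \lemref{l:bs}(1), which states exactly that when $Z_m^2=-1$ the sheaf $\cO_{Z_m}(-C_m)$ is not generated; since $m\in\cA_f$ by \defref{d:Af}, this is a member of $\cA_f$ whose ideal does not arise from a generated sheaf, so $\zeta \le p_g(A)-1 < p_g(A)$.

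The delicate point to handle carefully is the bookkeeping between ``$\cO_X(-C_t)$ generated as a sheaf on all of $X$'' versus the local non-generation on $Z_m$ recorded in \lemref{l:bs}(1), and ensuring that the non-generated case genuinely removes the cycle $C_m$ from the list of ideals counted by $\zeta$ rather than merely representing the same ideal on a different cycle. I would resolve this by invoking \lemref{l:repMin} and the fact that distinct $C_t$ represent distinct ideals (via the strictly increasing lengths $\ell_A(A/H^0(\cO_X(-C_{\beta_i})))=i$ in \remref{r:Af}), so that the correspondence $t\mapsto I_{C_t}$ is injective on generated members of $\cA_f$ and the count is exactly the number of generated members. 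This completes the equivalence and hence the corollary.
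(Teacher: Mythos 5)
Your proposal is correct and follows essentially the same route as the paper: reduce via Theorem \ref{Main}, Lemma \ref{l:repMin} and Theorem \ref{t:ZcA} to the cycles $C_t$ with $t\in\cA_f$, use Lemma \ref{l:bs}(3) for part (1), and combine Remark \ref{r:Af} with Proposition \ref{p:seq}(2) (plus Lemma \ref{l:bs}(1) for the case $Z_m^2=-1$) for part (2). You merely make explicit some details the paper leaves implicit, such as verifying $\br(I_{C_t})=2$ via Kato's formula and the injectivity of $t\mapsto I_{C_t}$.
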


\begin{proof}
By \thmref{Main}, \lemref{l:repMin}, and \thmref{t:ZcA}, any elliptic ideal $I$ such that $\overline{G}(I)$ is Gorenstein is represented by some $C_i$ on the minimal resolution.
Conversely, if an ideal $I$ is represented by some $C_i$, then $I$ is elliptic by \lemref{l:br=2} and $\overline{G}(I)$ is Gorenstein by \thmref{Main}.
 If $t\in \cA_f$ and $-Z_t^2\ge 2$, then $\cO_X(-C_t)$ is generated by \lemref{l:bs} (3). Hence we obtain (1).
For (2), note also that  $\# \cA_f = p_g(A)$ by \remref{r:Af} and that $-Z_t^2 \ge 2$ for all $t\in \cA_f$ if and only if $-Z_m^2\ge 2$ by \proref{p:seq} (2).
\end{proof}

\begin{defn}
A numerically Gorenstein normal two-dimensional elliptic singularity is called a {\em maximally elliptic} singularity if its geometric genus equals to the length of the elliptic sequence ($m+1$, in our case).
It is known that every maximally elliptic singularity 
is Gorenstein (\cite{yau.max}).
\end{defn}

\begin{ex}
Let $A$ be as in \exref{e:244}. Then $A$ is maximally elliptic and $\zeta = p_g(A) = m+1$.
\end{ex}

\par 
We have to classify the cycles $C_t$ ($t\in \cA_f$) such that $\cO_X(-C_t)$ is generated. \lemref{l:bs} tells us that if $\cO_X(-C_t)$ is not generated, then $Z_t^2=-1$; however the converse does not hold in general (cf. \exref{e:NoMaxEll}).
To analyze the generation of the invertible sheaves $\cO_X(-C_t)$, we have to know at first the combinatorial structure of the cycles $Z_t$ with $Z_t^2=-1$,
 and then some results in \cite{o.numGell} which depends on the structure of the singularity.

\begin{lem}[{\cite[3.5]{o.numGell}}]\label{l:z2=-1}
Assume that $0\le j \le m$ and $Z_{j}^2=-1$.
For $i\ge j$, let $F_i$ denote the unique irreducible component of $Z_i$ such that $Z_i^2=Z_iF_i=-1$ (see \proref{p:seq} (2)). 
\begin{enumerate}
\item Assume that $j\le i<m$.
Then $Z_{i}-Z_m=F_{m-1}+\cdots + F_i$ and this is a chain of $(-2)$-curves (see \figref{fig:Z2-1}).
Therefore, $C_mF_i=-K_XF_i=0$.

\item If $j>0$, there exists an unique irreducible component $\t F\le E-B_j$ such that $\t FB_j>0$.
Then $\t F^2=-2$ if and only if $Z_{j-1}^2=-1$.

\item Assume that $Z_{j-1}^2=-1$.
Let $D>0$ be a cycle such that $DC=0$ for every irreducible component $C\subset B_j$ and that the coefficient of $F_{j-1}$ in $D$ is $n$.
Then $\supp(D - nZ_{j-1})$ has no component of $B_{j-1}$.
\end{enumerate}
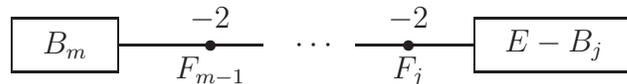
\begin{figure}[htb]
\begin{picture}(250,25)(70,10)
\put(75,15){{\framebox(40,20){$B_m$}}}
\put(150,25){\circle*{4}}
\put(225,25){\circle*{4}}
\put(115,25){\line(1,0){55}}
\put(205,25){\line(1,0){20}}
\put(150,35){\makebox(0,0){$-2$}}
\put(190,25){\makebox(0,0){$\cdots$}}
\put(225,35){\makebox(0,0){$-2$}}
\put(150,15){\makebox(0,0){$F_{m-1}$}}
\put(225,15){\makebox(0,0){$F_j$}}
\put(225,25){\line(1,0){25}}
\put(250,15){{\framebox(60,20){$E-B_j$}}}
\end{picture}
\caption{The resolution graph with $Z_j^2=-1$}\label{fig:Z2-1}
\end{figure}
\end{lem}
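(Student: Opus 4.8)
The whole lemma should fall out of one computation with the (negative definite) intersection form, organized along the elliptic sequence. Writing the fundamental cycle as $Z_i=\sum_{E_k\le B_i}a_kE_k$ with each $a_k\ge 1$ and each $Z_iE_k\le 0$, I would first note that $Z_j^2=-1$ together with the monotonicity $-Z_0^2\ge\cdots\ge -Z_m^2\ge 1$ of \proref{p:seq}(2) forces $Z_i^2=-1$ for all $i$ with $j\le i\le m$. Reading $Z_i^2=\sum_k a_k(Z_iE_k)$ as a sum of nonpositive integers equal to $-1$, exactly one summand equals $-1$ and the rest vanish; this produces the unique component $F_i$ (with $Z_iF_i=-1$ and coefficient $a_{F_i}=1$) and shows $Z_i$ is flat, $Z_iE_k=0$, on every other component of $B_i$. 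Since $F_i$ is flat for all later fundamental cycles, $F_i\notin B_{i+1}\supseteq\supp(E_{min})$. This is the engine for all three parts.

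For (1) I would establish, for each $i$ with $j\le i\le m-1$, the per-level identity $Z_{i+1}=Z_i-F_i$ with $F_i$ a $(-2)$-curve; telescoping then gives $Z_i-Z_m=\sum_{k=i}^{m-1}F_k$. To see $C_mF_i=-K_XF_i=0$: since $F_i\le B_i$, \proref{p:seq}(4) gives $(K_X+C_i')F_i=0$, so $C_mF_i=C_i'F_i=Z_iF_i+C_{i+1}'F_i=-1+C_{i+1}'F_i$; anti-nefness of $C_m=-K_X$ from \proref{p:seq}(3) gives $C_mF_i\le 0$, while $F_i$ meeting $\supp(C_{i+1}')=B_{i+1}$ gives $C_{i+1}'F_i\ge 1$, whence $C_mF_i=0$. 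Adjunction with $K_XF_i=0$ and negative definiteness then force $p_a(F_i)=0$ and $F_i^2=-2$, and $Z_iF_i=F_i^2+(Z_i-F_i)F_i$ yields $(Z_i-F_i)F_i=1$, so the $F_i$ attach one after another at single simple points, forming a chain of $(-2)$-curves as in \figref{fig:Z2-1}.

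The step I expect to be the main obstacle is the connectivity input hidden above: that the flat locus $B_i\setminus\{F_i\}$ is connected, so that $B_{i+1}=B_i\setminus\{F_i\}$, that $F_i$ genuinely meets the $E_{min}$-component (needed for $C_{i+1}'F_i\ge 1$), and that $Z_i-F_i$ is the fundamental cycle $Z_{i+1}$. I would prove this by contradiction with a $\chi$-argument: if $F_i$ were a cut vertex splitting off a piece $P$ disjoint from $E_{min}$, then from $\chi(Z_i)=0$ (\proref{p:seq}(5)) and the decomposition $Z_i=(Z_i-F_i)+F_i$ one computes the Euler characteristics of the pieces and invokes \proref{p:ellchi}(1) ($\chi\ge 0$, with equality only for cycles $\ge E_{min}$) to contradict the behaviour forced on $P$; the minimality of the fundamental cycle on $B_{i+1}$ then pins down $Z_{i+1}=Z_i-F_i$.

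Granting the chain structure, (2) is mostly bookkeeping: $B_j=\supp(E_{min})\cup\{F_{m-1},\dots,F_j\}$ has $F_j$ as its free end, and the unique component $\tilde F$ of $E-B_j$ meeting $B_j$ is the curve attached there. When $Z_{j-1}^2=-1$, part (1) applied one level lower identifies $\tilde F=F_{j-1}$ as a $(-2)$-curve, giving $(\Leftarrow)$. For $(\Rightarrow)$ I would argue contrapositively: if $Z_{j-1}^2\le -2$, the distribution of $Z_{j-1}^2=\sum_{E_k\le B_{j-1}\setminus B_j}a_k(Z_{j-1}E_k)$ over the attaching region is incompatible with $\tilde F^2=-2$; this converse is the delicate part, and is where I would lean on the finer analysis of \cite{o.numGell}. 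Finally (3) is clean: with $Z_{j-1}^2=-1$, $Z_{j-1}$ has coefficient $1$ on $F_{j-1}$, so $D-nZ_{j-1}$ has coefficient $0$ there. By (2) no component of $E-B_{j-1}$ meets $B_j$, so if we split $D-nZ_{j-1}=W+V$ with $W$ supported on $B_j$ and $V$ off $B_{j-1}$, then for $C\subset B_j$ we have $VC=0$ and $(D-nZ_{j-1})C=DC-nZ_{j-1}C=0$, hence $WC=0$ for all $C\subset B_j$. Negative definiteness forces $W=0$, and together with the vanishing $F_{j-1}$-coefficient this shows $\supp(D-nZ_{j-1})$ contains no component of $B_{j-1}=B_j\cup\{F_{j-1}\}$, as required.
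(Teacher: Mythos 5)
The paper offers no proof of this lemma at all --- it is imported verbatim from \cite[3.5]{o.numGell} --- so your proposal has to be measured against a complete argument rather than against anything in the text. Your part (1) is essentially sound and completable along the lines you sketch: reading $Z_i^2=-1$ as a sum of nonpositive terms $a_k(Z_iE_k)$ does isolate $F_i$ with coefficient $1$, and the connectivity worry you flag is resolvable with exactly the tools you name. Indeed every connected component of $B_i\setminus\{F_i\}$ must meet $F_i$ simply because $B_i$ is connected, so by \proref{p:seq}(1) the piece $B_{i+1}$ meets $F_i$; this gives $C'_{i+1}F_i\ge 1$, hence $C_mF_i=0$ and $F_i^2=-2$; then $\chi(Z_i-F_i)=0$ together with \proref{p:ellchi}(1) forces $B_i\setminus\{F_i\}$ to be connected, and the discrepancy $G=Z_i-F_i-Z_{i+1}$ is killed by $\chi(G)=Z_{i+1}G\le 0$ combined with $(Z_i-F_i)^2=Z_{i+1}^2=-1$ and negative definiteness. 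Part (3) is correct as written, granted (1) and (2).

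The genuine gap is in part (2), which is where the content of the lemma sits. You invoke ``the unique component $\widetilde F$ of $E-B_j$ meeting $B_j$'' as if uniqueness were visible from the chain picture, but uniqueness is one of the assertions to be proved: nothing established in part (1) rules out a second component of $E-B_j$ attaching to $B_m$, or to an interior $F_k$, along a different branch. The obvious numerical relation $C_{j-1}Z_j=0$ only says that a weighted sum of the intersection numbers $Z_jG$ over components $G\le E-B_j$ equals the coefficient of $F_j$ in $C_{j-1}$, which is at least $j$, so it does not pin down a single $G$; some finer input (e.g.\ \proref{p:ellchi}(2) applied to the connected components of $E-B_m$, or the numerically Gorenstein structure) is needed and is absent. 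Moreover the forward implication $\widetilde F^{\,2}=-2\Rightarrow Z_{j-1}^2=-1$ --- the other half of (2) --- is explicitly deferred to ``the finer analysis of \cite{o.numGell}'', i.e.\ to the very result being proved. Since your argument for part (3) also uses (2) (to conclude that no component of $E-B_{j-1}$ meets $B_j$), the proposal as it stands proves part (1), reduces (2) to the reference, and makes (3) conditional; it is not a self-contained proof of the lemma.
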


\begin{lem}[{\cite[3.6]{o.numGell}}]
\label{l:3free}
 For $t\in \cA_f \setminus \{m\}$ with $Z_t^2=-1$,  the following conditions are equivalent$:$
\begin{enumerate}
 \item $\cO_X(-C_t)$ is generated$;$
 \item $H^0(\cO_X(-C_t-Z_{t}))\subsetneqq H^0(\cO_X(-C_{t+1}))$$;$
 \item there exists a function $g \in H^0(\cO_X(-C_{t+1}))$ which satisfies
       $(g)_E=C_{t+1}$ outside $B_{t+2}$, where  we put
       $B_{m+1}=\emptyset$.
\end{enumerate}
\end{lem}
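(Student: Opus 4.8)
The plan is to exploit the chain structure forced by $Z_t^2=-1$ and to reduce every condition to the behaviour of $\cO_X(-C_t)$ along a single $(-2)$-curve. First I would record that by \proref{p:seq}(2) the inequalities $-Z_t^2\ge -Z_{t+1}^2\ge\cdots\ge -Z_m^2\ge 1$ together with $Z_t^2=-1$ force $Z_i^2=-1$ for all $i\ge t$, so that \lemref{l:z2=-1}(1) applies to each such $i$ and gives $Z_i-Z_m=F_{m-1}+\cdots+F_i$, a chain of $(-2)$-curves. In particular $Z_t-Z_{t+1}=F_t$, hence $C_t+Z_t=C_{t+1}+F_t$. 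This identity has two uses I would invoke repeatedly: the containment $H^0(\cO_X(-C_t-Z_t))=H^0(\cO_X(-C_{t+1}-F_t))\subseteq H^0(\cO_X(-C_{t+1}))$, which is what makes strictness in (2) meaningful, and the reformulation of (2) as the single statement that $F_t$ is \emph{not} a fixed component of $\cO_X(-C_{t+1})$. Using $Z_iZ_j=0$ for $i\ne j$ (\proref{p:seq}(2)) I would also compute $-C_tZ_t=-C_tF_t=-C_tC'_t=1$, so that the whole ``degree $1$'' sits on $F_t$.

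For $(1)\Leftrightarrow(2)$ the idea is to localize the only possible base point. Since $t\in\cA_f$, the remark following \lemref{l:fexists} gives that $\cO_X(-C_t)$ has no fixed component on $B_{t+1}\supseteq B_m$, so \lemref{l:bs}(3) applies to the nef divisor $-C_t$: if $\cO_X(-C_t)$ is not generated, its base point lies on some $C'_{t'}$ with $-C_tC'_{t'}=1$, and the degree computation above forces $t'=t$. Moreover the defining property of $t\in\cA_f$ is exactly $\cO_{C'_{t+1}}(-C_t)\cong\cO_{C'_{t+1}}$ (\lemref{l:fexists}(2)), and the associated section lifts to a global section of $\cO_X(-C_t)$ not vanishing along $C'_{t+1}$; hence there is no base point on $\supp(C'_{t+1})=B_{t+1}$. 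Combining these, the base locus is concentrated on $\supp(C'_t)\setminus\supp(C'_{t+1})$, and since $-C_tC'_t=1$ is supported on $F_t$, generation of $\cO_X(-C_t)$ is equivalent to the absence of a base point on $F_t$. I would then read this off the restriction sequence $0\to\cO_X(-C_t-F_t)\to\cO_X(-C_t)\to\cO_{F_t}(-C_t)\to 0$ with $\cO_{F_t}(-C_t)\cong\cO_{\PP^1}(1)$ globally generated, translating ``a global section does not vanish at a general point of $F_t$'' into ``$F_t$ is not fixed for $\cO_X(-C_{t+1})$'' via the identity $C_t+Z_t=C_{t+1}+F_t$ and the function furnished by $t\in\cA_f$.

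Finally $(2)\Leftrightarrow(3)$ is the bookkeeping step. The implication $(3)\Rightarrow(2)$ is immediate: a function $g$ with $(g)_E=C_{t+1}$ outside $B_{t+2}$ has coefficient along $F_t$ (which lies outside $B_{t+2}$) equal to that of $C_{t+1}$, so $g\notin H^0(\cO_X(-C_{t+1}-F_t))$, giving the strict inclusion. The reverse $(2)\Rightarrow(3)$ is where I expect the main difficulty: from a single section not vanishing extra on $F_t$ one must produce a section whose exceptional divisor agrees with $C_{t+1}$ on \emph{every} component of $B_{t+1}\setminus B_{t+2}$. I would propagate the non-vanishing along the chain of \lemref{l:z2=-1}, using anti-nefness of $C_{t+1}$ and connectedness (\proref{p:ellchi}) to exclude further fixed components once $F_t$ is unobstructed, in the spirit of \lemref{l:z2=-1}(3). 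The delicate point throughout is the precise localization of the base point to the single curve $F_t$ and the control of fixed components along the attached chain; the numerical facts $\chi(C_t)=\chi(C'_t)=0$ and the triviality $\cO_{C'_{t+1}}(-C_t)\cong\cO_{C'_{t+1}}$ are exactly what make this localization possible.
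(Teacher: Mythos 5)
The paper itself does not prove this lemma: it is quoted verbatim from \cite[Lemma 3.6]{o.numGell}, so there is no in-paper argument to compare with, and your attempt must be judged on its own. Your reductions are sound and well chosen: $Z_i^2=-1$ for all $i\ge t$ via \proref{p:seq}~(2), the identity $C_t+Z_t=C_{t+1}+F_t$, the computation $-C_tF_t=-C_tC'_t=1$ with $-C_tC'_{t'}=0$ or $\le -2$ for $t'\ne t$, the use of \lemref{l:bs}~(3) to force any base point onto $C'_t$, and the immediate implication $(3)\Rightarrow(2)$. However, two steps are genuinely incomplete and one is wrongly justified. The wrong justification: you claim the witness section for $t\in\cA_f$ is nowhere vanishing on $B_{t+1}$ because $\cO_{C'_{t+1}}(-C_t)\cong\cO_{C'_{t+1}}$; but $C'_{t+1}$ is non-reduced, and a nonzero section of its structure sheaf can be nilpotent, i.e.\ vanish along all of $(C'_{t+1})_{red}$. (The conclusion can be rescued without this: every component of $B_{t+1}$ has multiplicity at least $2$ in the scheme $C'_t$, so no point of $B_{t+1}$ is a nonsingular point of $C'_t$ and \lemref{l:bs}~(3) already excludes it.)

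The substantive gaps are in the two equivalences themselves. For $(1)\Leftrightarrow(2)$, your restriction sequence compares $H^0(\cO_X(-C_t))$ with $H^0(\cO_X(-C_t-F_t))$, whereas (2) compares $H^0(\cO_X(-C_{t+1}))$ with $H^0(\cO_X(-C_{t+1}-F_t))$; bridging these requires using that $H^0(\cO_X(-C_{t+1}))$ has codimension one in $H^0(\cO_X(-C_t))$ and analysing where each type of section vanishes on $F_t$ (for instance, every $h\in H^0(\cO_X(-C_{t+1}))\setminus H^0(\cO_X(-C_{t+1}-F_t))$ restricts to a section of $\cO_{F_t}(-C_t)\cong\cO_{\PP^1}(1)$ vanishing exactly at $F_t\cap F_{t+1}$, because $(h)_E-C_t\ge Z_{t+1}$ and $Z_{t+1}F_t=1$ already exhausts the degree). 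None of this bookkeeping is in your write-up, and it is precisely the content of the equivalence. For $(2)\Rightarrow(3)$ you explicitly stop at a plan: from a single function with the correct coefficient along $F_t$ one must produce a function whose exceptional divisor equals $C_{t+1}$ on \emph{every} component of $E$ outside $B_{t+2}$, which requires controlling fixed components on the whole chain and on $E-B_t$ (via \lemref{l:z2=-1}~(3) or the computation-sequence arguments of \cite{o.numGell}); this is the hardest step of the lemma and is not carried out. As it stands the proposal is a correct reduction of the problem, not a proof.
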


Actually, the proofs of some results in \cite{o.numGell} rely on the complex analytic setting; however, 
most of them hold in our algebraic situation. 
For example, \lemref{l:torsion} below enables us to prove Proposition 5.12 and Theorem 5.13 of \cite{o.numGell} in the characteristic zero case, and therefore, we obtain the following.

\begin{prop}[cf. {\cite[5.12, 5.13]{o.numGell}}\footnote{The invariant $\alpha$ in \cite{o.numGell} is zero since $A$ is Gorenstein.}]\label{p:ordp_g}
Assume that $\chara(k)=0$. 
Let $\gamma=\beta+1$ (see \defref{d:Af}).
Then we have the following.
\begin{enumerate}
\item  For $0 \le t <u\le m$,  the order of $\cO_{C'_u}(Z_0)$ in the Picard group $\pic C'_u$ is $\gamma$  
and $\cO_{C'_u}(Z_0)\cong \cO_{C'_u}(Z_t)$. 
\item  $\gamma \mid m$ and $p_g\X=m/\gamma +1$.
\item $\cA_f=\defset{\beta+i\gamma}{0 \le i < m/\gamma} \cup \{m\}$.
\end{enumerate}
\end{prop}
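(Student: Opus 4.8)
The plan is to treat this as the characteristic-zero algebraic counterpart of Propositions 5.12 and 5.13 of \cite{o.numGell}, whose proofs are analytic only through one point: the computation of the order of a fixed invertible sheaf in the Picard group of a (non-reduced) exceptional cycle. First I would isolate the invertible sheaves $\cO_{C'_u}(Z_0)$ as the central objects, prove the assertions of (1) about them, and then derive (2) and (3) formally from (1) together with the trivialization criterion of \lemref{l:fexists} and the Gorenstein/minimally-elliptic structure.

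For (1), I would first check that $\cO_{C'_u}(Z_t)$ lies in $\pic^0 C'_u$ for every $t<u$: by the definition of $B_{t+1}$ we have $Z_tB_{t+1}=0$, and $B_u\le B_{t+1}$ for $u\ge t+1$, so $Z_tE_i=0$ for every $E_i\le B_u$; hence the restriction has degree zero on each component of $C'_u$. Next, the isomorphisms $\cO_{C'_u}(Z_0)\cong\cO_{C'_u}(Z_t)$ I would obtain by moving through the elliptic sequence, using the exact sequences $0\to\cO_{Z_u}(-C'_{u+1})\to\cO_{C'_u}\to\cO_{C'_{u+1}}\to 0$ to compare $\pic C'_u$ with $\pic C'_{u+1}$, in the spirit of \lemref{l:z2=-1}. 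Finally, for the order: since $\beta=\min\cA_f\in\cA_f$, condition (2) of \lemref{l:fexists} gives $\cO_{C'_{\beta+1}}(-C_\beta)\cong\cO_{C'_{\beta+1}}$, and by the isomorphisms just established this reads $\cO_{C'_{\beta+1}}(Z_0)^{\otimes\gamma}\cong\cO_{C'_{\beta+1}}$, so the order divides $\gamma$. That the order is exactly $\gamma$ and is \emph{independent of} $u$ rests on the injectivity of the restriction maps $\pic^0 C'_u\to\pic^0 C'_{u+1}$ on the cyclic subgroup generated by $\cO(Z_0)$, together with the minimality of $\beta$; this is exactly where $\chara(k)=0$ enters, and I would supply it through \lemref{l:torsion} in place of the exponential-sequence argument used analytically in \cite{o.numGell}.

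Granting (1), I would deduce the combinatorics as follows. For $0\le i\le m-1$, \lemref{l:fexists}(2) says $i\in\cA_f$ iff $\cO_{C'_{i+1}}(C_i)\cong\cO_{C'_{i+1}}$, and by (1) the left side equals $\cO_{C'_{i+1}}(Z_0)^{\otimes(i+1)}$, whose triviality is equivalent to $\gamma\mid(i+1)$; thus $\cA_f\cap[0,m-1]=\defset{i}{0\le i\le m-1,\ \gamma\mid(i+1)}$. To obtain $\gamma\mid m$ I would show directly that $m-1\in\cA_f$: since $C'_m=Z_m=E_{min}$ and $-K_X=C_m$, adjunction gives $\cO_{E_{min}}(C_{m-1})=\cO_{E_{min}}(-K_X-E_{min})=\omega_{E_{min}}^{-1}$, and $\omega_{E_{min}}\cong\cO_{E_{min}}$ because $A$ is Gorenstein, so $E_{min}$ is minimally elliptic (cf.\ \cite{La}); hence $\cO_{C'_m}(C_{m-1})$ is trivial and $m-1\in\cA_f$, forcing $\gamma\mid m$. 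Combining with $\#\cA_f=p_g(A)$ from \remref{r:Af} yields $p_g(A)=m/\gamma+1$, which is (2), and then the explicit description of $\cA_f$ in (3).

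The main obstacle is the order computation in (1): showing that $\cO_{C'_u}(Z_0)$ has order exactly $\gamma$ for every $u$, rather than a proper divisor, and that this order does not drop under restriction to the smaller cycles $C'_{u+1}$. Analytically this is controlled by the exponential sequence on the exceptional curves, where an elliptic component contributes a genuine period; the essential new work is to reprove this torsion statement algebraically in characteristic zero via \lemref{l:torsion}, after which everything else is formal bookkeeping with the elliptic sequence.
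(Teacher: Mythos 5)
Your proposal follows essentially the same route as the paper: the paper itself gives no independent proof but simply invokes \cite[5.12, 5.13]{o.numGell}, noting that the only analytic ingredient (the order computation in the Picard group of the non-reduced cycles) is replaced in characteristic zero by \lemref{l:torsion}, which is exactly the substitution you make. Your additional bookkeeping (degree-zero check via $Z_tB_{t+1}=0$, triviality of $\cO_{C'_{\beta+1}}(-C_\beta)$ from \lemref{l:fexists}, and $m-1\in\cA_f$ via $\omega_{Z_m}\cong\cO_{Z_m}$) is consistent with that source and with \remref{r:Af}.
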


\begin{lem}[{cf. \cite[3.7]{o.numGell}}]
\label{l:torsion}
Assume that $\chara(k)=0$. 
Let $D$ be a cycle such that $D\ge Z_m$ and $\supp(D)$ is connected.  
We denote by $\pic^0 D$ the subgroup of $\pic D$ which consists of numerically trivial elements.
Let $\phi: \pic^0 D \to \pic ^0 Z_m$ be the homomorphism obtained by the restriction.
Then we have the following.
\begin{enumerate}
\item $\Ker \phi$ is torsion free.
\item If $\sigma \in \pic^0 D$ is a torsion element, then $\ord(\sigma)=\ord(\phi(\sigma))$.
\end{enumerate}
\end{lem}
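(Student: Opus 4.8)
The plan is to deduce (2) from (1) and to prove (1) by a dévissage along the cycle $D$. For the reduction, suppose (1) holds and let $\sigma\in\pic^0 D$ be torsion of order $n$; then $d:=\ord(\phi(\sigma))$ divides $n$, while $d\sigma$ lies in $\Ker\phi$ and is torsion, hence $d\sigma=0$ since $\Ker\phi$ is torsion free, giving $n\mid d$ and therefore $n=d=\ord(\phi(\sigma))$. So everything reduces to showing that $\Ker\phi$ is torsion free. Because invertible sheaves on distinct connected components are independent, and because every exceptional component outside $E_{min}$ is rational so that components of $\supp D$ not meeting $Z_m$ form trees of rational curves contributing only torsion-free groups, I would assume at the outset that $\supp D$ is connected.

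The heart of the argument is to build a chain of effective cycles $Z_m=Y_0<Y_1<\dots<Y_N=D$ in which each $Y_j$ is obtained from $Y_{j-1}$ by raising the multiplicity of a single component by one, chosen so that every $Y_j$ remains connected. Restriction of invertible sheaves then factors $\phi$ through the tower $\pic^0 D=\pic^0 Y_N\to\cdots\to\pic^0 Y_0=\pic^0 Z_m$. Putting $K_j=\ker(\pic^0 D\to\pic^0 Y_j)$ yields a finite filtration $0=K_N\subseteq\cdots\subseteq K_0=\Ker\phi$ whose successive quotients $K_{j-1}/K_j$ embed, via restriction to $Y_j$, into the single-step kernel $\ker(\pic^0 Y_j\to\pic^0 Y_{j-1})$. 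Since an extension of torsion-free abelian groups is again torsion free, it suffices to prove that each single-step kernel is torsion free.

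For one step I would use the exact sequence of unit sheaves $1\to 1+\cI_j\to\cO_{Y_j}^*\to\cO_{Y_{j-1}}^*\to 1$, where $\cI_j$ is the ideal of $Y_{j-1}$ in $Y_j$, and read off from its cohomology the isomorphism $\ker(\pic Y_j\to\pic Y_{j-1})\cong\Coker\big(H^0(\cO_{Y_{j-1}}^*)\xrightarrow{\delta_j}H^1(1+\cI_j)\big)$. Two cases occur. If the added component already appears in $Y_{j-1}$ (a purely infinitesimal thickening), then $\cI_j^2=0$ and $1+\cI_j\cong\cI_j$ is the additive sheaf of an invertible sheaf on a reduced curve, so $H^1(1+\cI_j)$ is a finite-dimensional $k$-vector space; in characteristic $0$ this is uniquely divisible, and since $Y_{j-1}$ is connected its units split as (constants)$\times(1+\text{nilpotents})$ with the second factor uniquely divisible, so $\Image\delta_j$ is divisible and the cokernel is torsion free. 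If instead the step introduces a genuinely new reduced component $C$, then $C$ is a smooth rational curve (only $E_{min}\le Z_m$ carries positive genus) meeting $\supp Y_{j-1}$ transversally in a single point, so the single-step kernel is the relative Picard group $\pic^0(C,\,\mathrm{pt})=0$. In either case the kernel is torsion free, completing the induction.

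The hard part will be the new-component case: one must establish the two structural inputs that all exceptional curves outside $E_{min}$ are smooth rational and that, beyond $Z_m$, they are attached along single points without creating loops in the dual graph. These follow from the structure of elliptic Gorenstein singularities together with the combinatorics of the elliptic sequence and of the curves $Z_j$ with $Z_j^2=-1$ recorded in \proref{p:seq} and \lemref{l:z2=-1}, and they are exactly where the complex-analytic reasoning of \cite{o.numGell} must be transcribed to the algebraic, characteristic-zero setting. The hypothesis $\chara(k)=0$ is used essentially in two places: to identify $1+\cI_j$ with the additive, hence torsion-free, sheaf $\cI_j$ in the infinitesimal steps, and to guarantee that the unipotent parts of the relevant unit groups are uniquely divisible, so that no torsion is created in the cokernels $\Coker\delta_j$.
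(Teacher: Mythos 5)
Your proof is correct in substance, but it takes a genuinely different route from the paper's. The paper factors $\phi$ through $\pic^0 D \to \pic^0 D_{red} \to \pic^0 Z_m$ and treats the two stages by entirely different means: the kernel of $\pic D \to \pic D_{red}$ is torsion free by citing Artin's Lemma 1.4 of \cite{Ar-num} (a filtration by vector groups, torsion free only because $\chara(k)=0$), while the map $\pic^0 D_{red} \to \pic^0 Z_m$ is shown to be \emph{injective} — a stronger statement than a torsion-free kernel — using the exact sequence $0 \to L\otimes\cO_C(-Z_m) \to L \to L|_{Z_m} \to 0$ with $C=D_{red}-Z_m$, the vanishing $H^i(L\otimes\cO_C(-Z_m))=0$ from \cite{Rh} (which needs $C'Z_m=1$ from \proref{p:ellchi}), and the fact that a numerically trivial invertible sheaf on a connected reduced curve with a nonzero section is trivial. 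You instead run a single dévissage from $Z_m$ up to $D$ one component at a time, in effect reproving the relevant part of Artin's lemma in the infinitesimal steps and replacing the vanishing-theorem input by the combinatorial fact (again \proref{p:ellchi}) that the reduced exceptional curves outside $E_{min}$ form trees of rational curves attached to $E_{min}$ at single points. Your route is more self-contained and elementary at the cost of more bookkeeping; the paper's is shorter and yields the sharper injectivity on the reduced level. Two small points to tighten: in the new-component step the scheme-theoretic intersection $C\cap Y_{j-1}$ is a fat point whenever the component of $Y_{j-1}$ met by $C$ has multiplicity greater than one, so the single-step kernel is a quotient of $H^0(\cO_{C\cap Y_{j-1}}^*)$ by constants — a unipotent group, hence still torsion free in characteristic zero, but not literally $0$ as written; and in the infinitesimal step it is worth saying explicitly (as you essentially do) that $\Image\delta_j$ is a divisible subgroup of the $\Q$-vector space $H^1(\cI_j)$, since a quotient of a torsion-free divisible group by an arbitrary subgroup need not be torsion free.
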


\begin{proof}
(1) By \cite[Lemma 1.4]{Ar-num}, the kernel of the restriction map $\phi':\pic D \to \pic D_{red}$ is torsion free.
If $H^1(\cO_{D_{red}})=0$, then $\pic^0 D_{red}$ is trivial (cf. \cite[11.1]{Li}) and $\pic^0 D$ is torsion free.
Assume $H^1(\cO_{D_{red}})\ne 0$. Since $\chi(D_{red})=0$ and $Z_m$ is the minimally elliptic cycle, we have $Z_m\le D_{red}$ by \proref{p:ellchi}. 
Let $L$ be an invertible sheaf on $D_{red}$ corresponding to an element of $\pic^0 D_{red}$. 
We show that $L$ is trivial if and only if so is $L|_{Z_m}$.
Let $C=D_{red}-Z_m$.
We may assume $C>0$.
Then we have the exact sequence
\[
0 \to L\otimes \cO_C(-Z_m) \to L \to L|_{Z_m} \to 0.
\]
Since $C'Z_m = 1$ and $\chi(C')=1$ for every connected component $C'\le C$ by \proref{p:ellchi}, we have $H^i(L\otimes \cO_C(-Z_m))=0$ for $i=0,1$ (cf. \cite[1.4]{Rh}), and hence $H^0(L)=H^0(L|_{Z_m})$. 
Since an invertible sheaf on a connected reduced cycle is trivial if it has non-zero global sections (cf. \cite[3.11]{Re}), $L$ is trivial if and only if so is $L|_{Z_m}$.
Hence  $\Ker \phi = \Ker ( \pic^0 D \to \pic ^0 D_{red} ) \subset \Ker \phi'$. 
\par 
(2) If $n=\ord(\phi(\sigma))$, then $n\sigma$ is a torsion element in $\Ker \phi$. Hence $n\sigma$ is trivial, and $\ord(\sigma)=n$.
\end{proof}
\par 
From \remref{r:Af} and \proref{p:ordp_g}, we have the following.

\begin{prop}\label{p:maxell}
For a Gorenstein elliptic singularity $A$ with $\chara(k)=0$, 
the following are equivalent.
\begin{enumerate}
\item $A$ is maximally elliptic.
\item $\beta=0$.
\item $\gamma=1$.
\item $\cA_f=\{0, 1, \dots, m\}$.
\end{enumerate}
\end{prop}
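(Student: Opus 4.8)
The plan is to read off all four equivalences directly from the structural results already in hand—chiefly \proref{p:ordp_g}, together with the count $\#\cA_f = p_g(A)$ from \remref{r:Af}—so that the proof reduces to bookkeeping; the genuine content (in particular the order computation in the Picard group, which is where $\chara(k)=0$ is used) has already been absorbed into \proref{p:ordp_g}. I would begin with $(2)\Leftrightarrow(3)$, which is immediate from the defining relation $\gamma=\beta+1$ recorded at the start of \proref{p:ordp_g}: clearly $\beta=0$ if and only if $\gamma=1$.

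For $(1)\Leftrightarrow(3)$ I would invoke \proref{p:ordp_g}(2), namely $p_g(A)=m/\gamma+1$. By the definition of maximal ellipticity (in our Gorenstein setting), $A$ is maximally elliptic precisely when $p_g(A)$ equals the length $m+1$ of the elliptic sequence, i.e. when $m/\gamma+1=m+1$, that is $m/\gamma=m$. For $m\ge 1$ this forces $\gamma=1$, and conversely $\gamma=1$ returns $p_g(A)=m+1$. The one point deserving a word is the degenerate case $m=0$: there the elliptic sequence is $\{Z_0\}=\{E_{min}\}$, so $A$ is minimally elliptic with $p_g(A)=1=m+1$, while \defref{d:Af} gives $\cA_f=\{m\}=\{0\}$, whence $\beta=0$ and $\gamma=1$; thus all four conditions hold simultaneously and trivially in that case.

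Finally, for $(3)\Leftrightarrow(4)$ I would use the explicit description $\cA_f=\defset{\beta+i\gamma}{0\le i<m/\gamma}\cup\{m\}$ from \proref{p:ordp_g}(3). If $\gamma=1$, then $\beta=0$ and this set becomes $\{0,1,\dots,m-1\}\cup\{m\}=\{0,1,\dots,m\}$. Conversely, if $\cA_f=\{0,1,\dots,m\}$, then $\beta=\min\cA_f=0$, hence $\gamma=1$; alternatively one can argue by cardinality, since $(4)$ forces $\#\cA_f=m+1=p_g(A)$ and then \proref{p:ordp_g}(2) again yields $\gamma=1$. Since no analytically hard step survives into this proposition, the only thing I expect to require care is maintaining internal consistency in the boundary case $m=0$ flagged above; everything else is a direct substitution into the formulas of \proref{p:ordp_g}.
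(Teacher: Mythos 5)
Your proof is correct and follows exactly the route the paper intends: the paper gives no separate argument beyond the sentence ``From Remark~\ref{r:Af} and Proposition~\ref{p:ordp_g}, we have the following,'' and your write-up simply supplies the straightforward bookkeeping (using $\gamma=\beta+1$, $p_g(A)=m/\gamma+1$, and the explicit description of $\cA_f$) that this sentence leaves implicit. The attention to the degenerate case $m=0$ is a harmless extra precaution.
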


\begin{prop}\label{p:bsz^2-1}
 Assume that $A$ is a Gorenstein elliptic singularity.
\begin{enumerate}
\item Assume that $\chara(k)=0$. 
 If there exists $t\in \cA_f\setminus \{m\}$ such that $\cO_X(-C_t)$ is not generated, then $A$ is maximally elliptic and $Z_t^2=-1$.
\item If $A$ is maximally elliptic, then $\cO_X(-C_t)$ is not generated for every $t\in \cA_f$ with $Z_t^2=-1$.
\end{enumerate} 
\end{prop}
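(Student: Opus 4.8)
The plan is to reduce both parts to a single question about a fixed component, and then to read off the answer from the minimally elliptic cycle. First I would record the common reduction. In part (1) the equality $Z_t^2=-1$ is immediate: since $t\in\cA_f\setminus\{m\}$, the sheaf $\cO_X(-C_t)$ has no fixed component on $B_{t+1}\supseteq B_m$ by the remark following \lemref{l:fexists}, so the final clause of \lemref{l:bs}~(3) applies and gives $Z_t^2=-1$; in part (2) this is assumed. In either case $Z_i^2=-1$ for all $i\ge t$ by \proref{p:seq}~(2), so \lemref{l:z2=-1}~(1) yields $Z_t-Z_{t+1}=F_t$, a single $(-2)$-curve, and hence $C_t+Z_t=C_{t+1}+F_t$. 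Consequently $H^0(\cO_X(-C_t-Z_t))=H^0(\cO_X(-C_{t+1}-F_t))$, and the criterion \lemref{l:3free}~(2) becomes the clean statement that, for $t<m$, the sheaf $\cO_X(-C_t)$ is generated if and only if $F_t$ is \emph{not} a fixed component of $\cO_X(-C_{t+1})$. The case $t=m$ is handled directly by \lemref{l:bs}~(1).

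For part (2) I would argue without any hypothesis on the characteristic. Maximal ellipticity means $p_g(A)=m+1$, so since $\cA_f\subseteq\{0,\dots,m\}$ and $\#\cA_f=p_g(A)$ by \remref{r:Af}, we get $\cA_f=\{0,\dots,m\}$; thus every index between $t$ and $m$ lies in $\cA_f$ and carries $Z_i^2=-1$. The base case $t=m$ is \lemref{l:bs}~(1): the degree-one sheaf $\cO_{Z_m}(-C_m)$ is not generated, so neither is $\cO_X(-C_m)$. For $t<m$ the goal, by the reduction above, is to show that $F_t$ is a fixed component of $\cO_X(-C_{t+1})$. The mechanism I expect is that the unique base point of $\cO_{Z_m}(-C_m)$ on the elliptic cycle $Z_m$ sits exactly at the point where the $(-2)$-chain $F_{m-1},\dots,F_t$ of \lemref{l:z2=-1}~(1) meets $Z_m$, and that this base point forces each $F_i$ in turn ($i=m-1,\dots,t$) into the fixed locus. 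The combinatorial control needed to propagate the vanishing outward along the chain is provided by \lemref{l:z2=-1}~(3).

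For part (1) I would use the characteristic-zero machinery to pin down maximal ellipticity. Having $Z_t^2=-1$, it remains by \proref{p:maxell} to prove $\gamma=1$, and I would do this contrapositively: assuming $\gamma\ge 2$, I will show that the given $\cO_X(-C_t)$ is in fact generated, contradicting the hypothesis. The key input is \proref{p:ordp_g}~(1): the class $\cO_{C'_{t+1}}(Z_0)\cong\cO_{C'_{t+1}}(Z_t)$ has order exactly $\gamma$ in $\pic C'_{t+1}$, so for $\gamma\ge 2$ it is a nontrivial torsion element, and its nontriviality is faithfully detected after restriction to $Z_m$ by \lemref{l:torsion}. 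This nontriviality is precisely what removes the obstruction seen in part (2): it allows the construction of a section of $\cO_X(-C_{t+1})$ whose vanishing order along $F_t$ is minimal, i.e. a $g$ as in \lemref{l:3free}~(3), so that $F_t$ is not fixed and $\cO_X(-C_t)$ is generated. Hence $\gamma=1$ and $A$ is maximally elliptic.

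The step I expect to be the main obstacle is, in both parts, the precise bridge between the \emph{global} fixed-component condition on $\cO_X(-C_{t+1})$ and the behaviour of a line bundle on the single elliptic cycle $Z_m$. In characteristic zero this is the order of the torsion class $\cO_{C'_{t+1}}(Z_0)$, and the difficulty is that the analytic (exponential-sequence) arguments of \cite[5.12, 5.13]{o.numGell} are unavailable; they must be replaced by \lemref{l:torsion}, which reduces torsion-detection on $C'_{t+1}$ to its restriction to $Z_m$. In the characteristic-free part (2) the obstacle is instead to locate the base point of $\cO_{Z_m}(-C_m)$ at the attaching point of the $(-2)$-chain and to propagate it, and here the bookkeeping of \lemref{l:z2=-1} is what makes the induction along $F_{m-1},\dots,F_t$ go through. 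Once this bridge is in place the two directions of the resulting dichotomy — $F_t$ fixed when $\gamma=1$ and $F_t$ free when $\gamma\ge 2$ — give parts (2) and (1) respectively.
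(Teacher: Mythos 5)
Your reduction via \lemref{l:3free} (for $t<m$, generation of $\cO_X(-C_t)$ is equivalent to $F_t$ not being a fixed component of $\cO_X(-C_{t+1})$), your use of \lemref{l:bs}(3) to get $Z_t^2=-1$, and your treatment of part (2) --- locating the base point of $\cO_{Z_m}(-C_m)$ at $Z_m\cap F_{m-1}$ via the triviality of $\cO_{Z_m}(-Z_{m-1})$ and propagating the fixed component down the $(-2)$-chain --- are essentially the paper's argument, and you are right that part (2) needs no hypothesis on the characteristic (the equality $\cA_f=\{0,\dots,m\}$ comes from \remref{r:Af} alone).

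Part (1) is where there is a genuine gap. You argue by contraposition: assuming $\gamma\ge 2$ you must \emph{construct}, for each $t\in\cA_f\setminus\{m\}$ with $Z_t^2=-1$, a function $g\in H^0(\cO_X(-C_{t+1}))$ with $(g)_E=C_{t+1}$ outside $B_{t+2}$, i.e.\ show that $F_t$ is not fixed. You assert that the nontriviality of the order-$\gamma$ class $\cO_{C'_{t+1}}(Z_t)$ ``allows the construction of a section,'' but that is precisely the hard step and no mechanism is given: passing from nontriviality of a torsion line bundle on $C'_{t+1}$ to the existence of a global section of $\cO_X(-C_{t+1})$ with prescribed exceptional divisor requires surjectivity of restriction maps and vanishing statements that are not among the quoted lemmas (this is essentially the content of \cite[5.12, 5.13]{o.numGell}, which is what \lemref{l:torsion} is there to repair, not to replace). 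The paper avoids the issue by arguing in the direct direction, where the hypothesis hands you a concrete object to compute with: non-generation of $\cO_X(-C_t)$ forces, via \lemref{l:3free}, $F_t$ to lie in the fixed part $F$ of $\cO_X(-C_m)$; one checks that $C_m+F$ is anti-nef, that $F$ meets $Z_m$ with $(C_m+F)Z_m=0$, hence $\cO_{Z_m}(-C_m-F)\cong\cO_{Z_m}$, and combining this with $\omega_{Z_m}\cong\cO_{Z_m}(-C_m+Z_m)\cong\cO_{Z_m}$ gives $\cO_{Z_m}(-Z_{m-1})\cong\cO_{Z_m}$, whence $\gamma=1$ by \proref{p:ordp_g}(1). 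Detecting triviality from a fixed component is a short computation; producing a section from nontriviality is not. Either supply that construction, or run the argument forward as above.
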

\begin{proof}
(1) Assume that $t\in \cA_f\setminus \{m\}$ and $\cO_X(-C_t)$ is not generated.
We have $Z_i^2=-1$ for $i\ge t$ by \proref{p:seq} (2) and  \lemref{l:bs} (3), and $C_t+Z_{t}=C_{t+1}+F_t$, where $F_t$ is as in \lemref{l:z2=-1}. 
If $\cO_X(-C_m)$ has no fixed component, then there exists $g\in H^0(\cO_X(-C_m))$ such that $(g)_E=C_m$, and thus the condition (3) of \lemref{l:3free} is satisfied; 
however, it contradicts the assumption that $\cO_X(-C_t)$ is not generated.
Therefore, $\cO_X(-C_m)$ has a fixed part $F$ which contains $F_t$
because $H^0(\cO_X(-C_m)) \subset H^0(\cO_X(-C_{t+1})) =H^0(\cO_X(-C_{t+1}-F_t))$ by \lemref{l:3free}, 
 and $F$ has no components of $B_m$ by \lemref{l:bs} (1).
Since $\cO_X(-C_m-F)$ has no fixed component, $C_m+F$ is anti-nef.
If $F$ does not intersect $Z_m$, then we can take $F_i$ of \lemref{l:z2=-1} with $i< m$ so that  $F_i\not\le F$ and $F_i$ intersects $F$; however, it induces a contradiction that $(C_m+F)F_i = FF_i>0$.
Hence $F$ intersects $Z_m$ and $(C_m+F)Z_m = Z_m^2+FZ_m = -1+FZ_m=0$, because $FZ_m>0$.
Therefore, taking $g\in H^0(\cO_X(-C_m-F))$ such that $(g)_E=C_m+F$, we have $\cO_{Z_m}(C_m+F)\cong \cO_{Z_m}(\di (g)) \cong \cO_{Z_m}$.
Since $\cO_{Z_m}(-C_m+Z_m)\cong \omega_{Z_m}\cong \cO_{Z_m}$ and 
$\cO_{Z_m}(-Z_{m-1})\cong \cO_{Z_m}(-Z_m-F)$ (cf. \lemref{l:z2=-1}), we have 
\[
\cO_{Z_m}(-Z_{m-1})\cong \cO_{Z_m}(-Z_m-F-C_m+Z_m)\cong \cO_{Z_m}(-C_m-F)\cong \cO_{Z_m}.
\]
It follows from \proref{p:ordp_g} (1) that $\gamma=1$.
Therefore, $A$ is maximally elliptic by \proref{p:maxell}.
\par
(2) 
Assume that $A$ is maximally elliptic.
Since $\cA_f=\{0, 1, \dots, m\}$ by \proref{p:maxell}, $\cO_{C'_j}(-Z_i)\cong \cO_{C'_j}$ for $0 \le i< j \le m$ by \lemref{l:fexists}.
It follows from \lemref{l:bs} (1) that $\cO_X(-C_m)$ is not generated if $Z_m^2= -1$. 
Assume that $Z_{m-1}^2=-1$.
Then $Z_m^2=-1$ by \proref{p:seq} (2).
Since  $Z_m+F_{m-1}=Z_{m-1}$, we have 
$\cO_{Z_m}(-Z_m-F_{m-1})\cong
\cO_{Z_m}(-Z_{m-1})\cong  \cO_{Z_m}$;
this implies that the point $B_m\cap F_{m-1}$ is the base point of $\cO_{Z_m}(-C_m)$.
If there exists $g\in H^0(\cO_X(-C_m))$ such that $(g)_E=C_m+Y$, where $Y\ge 0$ and $Y$ has no component of $Z_{m-1}$, 
then the divisor $H:=\di_X(g)-C_m-Y$ satisfies $HF_{m-1}= -Y F_{m-1} \le0$; it contradicts that the base point of $\cO_{Z_m}(-C_m)$ is $B_m\cap F_{m-1}$.
Thus we see that $F_{m-1}$ is a fixed component of $\cO_X(-C_m)$.
By \lemref{l:3free}, $\cO_X(-C_{m-1})$ is not generated, because (3) of the lemma is not satisfied for $t=m-1$.
Since $\cO_{C'_{m-1}}(-C_{m-1}-F_{m-2}) = \cO_{C'_{m-1}}(-C_{m-2}-Z_{m-2}) \cong \cO_{C'_{m-1}}$, 
the similar argument as above implies that $F_{m-2}$ is a fixed component of $\cO_X(-C_{m-1})$.
Inductively, we obtain that $\cO_X(-C_t)$ is not generated for $0\le t < m$ with $Z_t^2=-1$.
\end{proof}

\begin{thm}\label{t:ellGG}
Assume that $A$ is an elliptic Gorenstein singularity.
Let $I$ be an elliptic ideal, namely, $\br(I)=2$.
\begin{enumerate}
\item Assume that $A$ is maximally elliptic.
Then $\overline{G}(I)$ is Gorenstein if and only if 
$I$ is represented by $C_t$  with $t\in \cA_f$ such that $-Z_t^2\ge 2$.
\item Assume that  $\chara(k)=0$ and $A$ is not maximally elliptic.
Then $\overline{G}(I)$ is Gorenstein if and only if 
$I$ is represented by $C_t$ with $t\in \cA_f \setminus \{m\}$ or $C_m$ with $-Z_m^2\ge 2$.  
\end{enumerate}
\end{thm}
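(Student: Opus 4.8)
The plan is to reduce the Gorenstein condition to a purely geometric question---which of the cycles $C_t$ ($t\in\cA_f$) have $\mathcal{O}_X(-C_t)$ generated by global sections---and then read off both cases from the results already assembled in this section. First I would invoke \thmref{Main}: for an elliptic ideal $I=I_Z$ over the Gorenstein ring $A$, $\overline{G}(I)$ is Gorenstein if and only if $\chi(Z)=0$. When this holds, \lemref{l:repMin} lets me replace $Z$ by its pushforward to the minimal resolution $X$, so that $I$ is represented on $X$ by an anti-nef cycle with $\chi=0$ and $\mathcal{O}_X(-Z)$ generated; \thmref{t:ZcA} then forces $Z=C_t$ for some $t\in\cA_f$. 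Conversely, whenever $t\in\cA_f$ is such that $\mathcal{O}_X(-C_t)$ \emph{is} generated, the cycle $C_t$ represents a genuine ideal $I_{C_t}$, and $\chi(C_t)=0$ by \proref{p:seq}~(5), so \thmref{Main} makes $\overline{G}(I_{C_t})$ Gorenstein. A one-line Riemann--Roch check via Kato's formula (\thmref{RRformula}) gives $\ell_A(A/I_{C_t})+q(I_{C_t})=p_g(A)$ with $\ell_A(A/I_{C_t})\ge 1$, hence $q(I_{C_t})<p_g(A)$ and $I_{C_t}$ is genuinely elliptic rather than a $p_g$-ideal. By \remref{r:Af} distinct $t\in\cA_f$ give distinct colengths, hence distinct ideals. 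Thus the elliptic ideals with Gorenstein normal tangent cone correspond bijectively to $\{C_t : t\in\cA_f,\ \mathcal{O}_X(-C_t)\ \text{generated}\}$, and the whole theorem reduces to deciding, for each $t\in\cA_f$, whether $\mathcal{O}_X(-C_t)$ is generated.

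For part (1), where $A$ is maximally elliptic, I would combine two facts. By \lemref{l:bs}~(3), if $t\in\cA_f$ and $\mathcal{O}_X(-C_t)$ is not generated then $Z_t^2=-1$; and by \proref{p:bsz^2-1}~(2), when $A$ is maximally elliptic the sheaf $\mathcal{O}_X(-C_t)$ is in fact not generated for every $t\in\cA_f$ with $Z_t^2=-1$. Together these show that, for $t\in\cA_f$, the sheaf $\mathcal{O}_X(-C_t)$ is generated if and only if $-Z_t^2\ge 2$, which is exactly the asserted characterization.

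For part (2), with $\chara(k)=0$ and $A$ not maximally elliptic, the key input is the contrapositive of \proref{p:bsz^2-1}~(1): since $A$ is not maximally elliptic, $\mathcal{O}_X(-C_t)$ is generated for \emph{every} $t\in\cA_f\setminus\{m\}$, regardless of the value of $Z_t^2$. It then remains to treat $t=m$: by \lemref{l:bs}~(1) the sheaf $\mathcal{O}_X(-C_m)$ fails to be generated when $Z_m^2=-1$, while \lemref{l:bs}~(3) forces it to be generated once $-Z_m^2\ge 2$, so $\mathcal{O}_X(-C_m)$ is generated precisely when $-Z_m^2\ge 2$. Collecting the two ranges of $t$, $\mathcal{O}_X(-C_t)$ is generated if and only if $t<m$ or $-Z_t^2\ge 2$, which is the stated condition.

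The routine steps (the Riemann--Roch bookkeeping and the reduction to the minimal resolution) are painless, and the genuinely hard analysis has already been isolated into \proref{p:bsz^2-1}; note that the characteristic-zero hypothesis in part~(1) of that proposition is exactly what restricts the present theorem's part~(2) to $\chara(k)=0$. The step I expect to require the most care is the bidirectional reduction in the first paragraph: one must make sure that a non-generated $C_t$ genuinely fails to produce a Gorenstein-tangent-cone ideal \emph{represented by $C_t$} (so that it is correctly excluded from the list), and that the correspondence $t\mapsto I_{C_t}$ is injective, so that the equivalence is a clean classification rather than merely a count.
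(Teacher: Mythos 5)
Your proposal is correct and follows essentially the same route as the paper: reduce via Theorem \ref{Main}, Lemma \ref{l:repMin} and Theorem \ref{t:ZcA} to deciding which $\cO_X(-C_t)$ ($t\in\cA_f$) are generated, then settle that question with Lemma \ref{l:bs} and Proposition \ref{p:bsz^2-1}. The only detail worth making explicit is that your ``genuinely elliptic'' step also uses the fact, recorded at the start of the section, that $\br(I)\le 2$ for every integrally closed $\m$-primary ideal in an elliptic singularity.
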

\begin{proof}
As mentioned in the proof of \corref{c:ellGG2}, any elliptic ideal $I$ such that $\overline{G}(I)$ is Gorenstein is represented by some $C_i$ on the minimal resolution.
Therefore, the classification of such ideals correspond to that of cycles $C_i$ such that $\cO_X(-C_i)$ is generated; it is done by a combination of \lemref{l:bs} and \proref{p:bsz^2-1}.  
\end{proof}

\begin{cor}\label{c:MaxEllG}
Assume that $A$ is a maximally elliptic singularity with  $Z_0^2=-1$.
If $I \subset A$ is an $\m$-primary integrally closed ideal and $\overline{G}(I)$ is Gorenstein, then $\br(I)=1$, namely, $I$ is a $p_g$-ideal.
\end{cor}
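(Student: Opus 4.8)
The plan is to reduce the statement to the classification of elliptic ideals with Gorenstein normal tangent cone that has just been established. Since $A$ is an elliptic singularity, every $\m$-primary integrally closed ideal $I$ satisfies $\br(I)\le 2$ (by \cite{Ok}, as recalled at the start of this section), so $\br(I)\in\{1,2\}$. Hence proving $\br(I)=1$ is equivalent to ruling out that $I$ is an \emph{elliptic} ideal (i.e. $\br(I)=2$) whose $\overline{G}(I)$ is Gorenstein. First I would invoke \thmref{t:ellGG}(1): because $A$ is maximally elliptic, an elliptic ideal $I$ has $\overline{G}(I)$ Gorenstein if and only if $I=I_{C_t}$ for some $t\in\cA_f$ with $-Z_t^2\ge 2$. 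Thus it suffices to show that no index $t$ of the elliptic sequence satisfies $-Z_t^2\ge 2$.

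The key numerical step uses \proref{p:seq}(2), which gives the monotonicity $-Z_0^2\ge -Z_1^2\ge\cdots\ge -Z_m^2$. Each $Z_i=Z_{B_i}$ is the fundamental cycle of the connected configuration $B_i$ on the minimal resolution, so $Z_i^2<0$, that is $-Z_i^2\ge 1$. Combining this with the hypothesis $Z_0^2=-1$, the value $-Z_0^2=1$ is the largest term of a nonincreasing sequence of positive integers, and I conclude $-Z_t^2=1$ for every $0\le t\le m$. In particular, since $\cA_f\subseteq\{0,1,\dots,m\}$ by \defref{d:Af}, there is no $t\in\cA_f$ with $-Z_t^2\ge 2$.

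Putting the two steps together, \thmref{t:ellGG}(1) shows that there is \emph{no} elliptic ideal $I$ for which $\overline{G}(I)$ is Gorenstein, so the given ideal $I$ with $\overline{G}(I)$ Gorenstein cannot be elliptic; therefore $\br(I)\ne 2$, and hence $\br(I)=1$, i.e. $I$ is a $p_g$-ideal. I do not expect a genuine obstacle here, as the corollary is a purely numerical consequence of \thmref{t:ellGG}; the only points requiring care are that I must use part (1) of that theorem (the maximally elliptic case), which is valid in all characteristics and needs only the inclusion $\cA_f\subseteq\{0,\dots,m\}$ rather than the full identity $\cA_f=\{0,\dots,m\}$ from \proref{p:maxell}, and that I correctly record the fundamental-cycle inequality $-Z_i^2\ge 1$ underlying the reduction. (One could equally deduce the vanishing of the count $\zeta$ from \corref{c:ellGG2}(1) combined with \thmref{t:ellGG}.)
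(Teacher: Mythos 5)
Your proposal is correct and follows essentially the same route as the paper: invoke $\br(I)\le 2$ from \cite{Ok}, use \proref{p:seq}(2) together with $Z_0^2=-1$ to conclude $Z_t^2=-1$ for all $0\le t\le m$, and then apply \thmref{t:ellGG} to rule out an elliptic ideal with Gorenstein normal tangent cone. Your extra care about negativity of the fundamental cycles and about which part of \thmref{t:ellGG} is needed only makes explicit what the paper leaves implicit.
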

\begin{proof}
We have $\br(I) \le 2$ by \cite[3.3]{Ok}. 
By \proref{p:seq}, $Z_t^2=-1$ for $0\le t \le m$. Therefore, the assertion immediately follows from \thmref{t:ellGG}.
\end{proof}

\begin{rem}[{Cf. \cite[4.10]{Ok}}]
In general, for a two-dimensional normal Gorenstein singularity $(A, \m)$ with $p_g(A)>0$, 
$\m$ is a $p_g$-ideal if and only if $A$ is a maximally elliptic Gorenstein singularity with $Z_0^2=-1$.
\end{rem}

%

The following example shows that the elliptic ideals $I \subset A$ such that $\overline{G}(I)$ is Gorenstein cannot be characterized by their resolution graph.

\begin{ex}
[{cf. \cite[4.7]{K}}]
\label{e:NoMaxEll}
Let $n$ be a positive integer and consider the polynomials
\[
f=x^2+z(z^{4n+2}+y^4) \quad \text{and} \quad g=x^2+y^3+z^{6(2n+1)},
\]
which are weighted homogeneous polynomials with respect to the weights $(4n+3,2n+1,2)$ and $(3(2n+1),2(2n+1),1)$, respectively.
Let $R=k[x,y,z]_{(x,y,z)}$, $A=R/(f)$ and $B=R/(g)$.
These singularities have the same resolution graph as in \figref{fig:23(12)}, where $g(E_{2n})=1$, $g(E_i)=0$ for $0\le i \le 2n-1$.

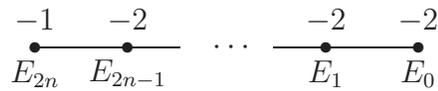
\begin{figure}[htb]
\begin{picture}(160,20)(110,15)
\put(115,25){\circle*{4}}
\put(150,25){\circle*{4}}
\put(225,25){\circle*{4}}
\put(260,25){\circle*{4}}
\put(115,25){\line(1,0){55}}
\put(190,25){\makebox(0,0){$\cdots$}}
\put(205,25){\line(1,0){20}}
\put(115,35){\makebox(0,0){$-1$}}
\put(150,35){\makebox(0,0){$-2$}}
\put(225,35){\makebox(0,0){$-2$}}
\put(260,35){\makebox(0,0){$-2$}}
\put(115,15){\makebox(0,0){$E_{2n}$}}
\put(150,15){\makebox(0,0){$E_{2n-1}$}}
\put(225,15){\makebox(0,0){$E_1$}}
\put(260,15){\makebox(0,0){$E_0$}}
\put(225,25){\line(1,0){35}}
\end{picture}
\caption{The resolution graph of $A=R/(f)$ and $B=R/(g)$}\label{fig:23(12)}
\end{figure}
\par 
It is easy to check that these singularities are Gorenstein elliptic singularities such that $Z_i=\sum_{j=i}^{2n} E_j$, $Z_i^2=-1$, and $C_iE_j=-\delta_{ij}$, where $\delta_{ij}$ denotes the Kronecker delta, for $0 \le i \le 2n$.
Using \thmref{t:pg-f}, we easily see that $p_g(A)=n+1$ and $p_g(B)=2n+1$. 
\par 
Then $B$ is a maximally elliptic singularity, and hence there is no elliptic ideal $I$ such that $\overline{G}(I)$ is Gorenstein by \corref{c:MaxEllG}.
\par
Assume that $\chr(k)\ne 2$.
Let  $\cal G$ be the set of elliptic ideals $I \subset A$ such that $\overline{G}(I)$ is Gorenstein.
We will show that $\cal G = \defset{(x,y,z^{j})}{1 \le j \le n}$.
First, let us identify the divisors of $x, y, z$ on $X$.
For a divisor $D$ on the resolution space $X$, we denote by $c(D)$ the coefficient of $E_{2n}$ in $D$.
For example, $c(C_{i})=i+1$. 
Let $H\subset X$ be the proper transform of the curve $\spec A/(x,z)$.
Since $A/(z)=k[x,y]_{(x,y)}/(x^2)$ and $c(\di_X(z))=\deg(z)=2$, we have $\di_X(z)=2Z_0+2H$;  we also have  $HE = H E_0 = 1$.
Recall that $E_{2n}$ is isomorphic to the curve $V(f)\subset \PP(4n+3,2n+1,2)$. The cone over $V(f)$ has an isolated singularity at the vertex and the localization of its local ring coincides with $A$.
We take points $p_0, p_1, p_2, p_3 \in V(f)$ such that 
\[
p_0=(0,1,0), \ \ p_1=(0,(\sqrt{-1})^{n},\sqrt{-1}), \ \ p_2=(0,(\sqrt{-1})^{n+1},\sqrt{-1}), \ \ p_3=(1,0,-1).
\] 
Then these points are distinct and 
\[
\{p_0, p_1, p_2\}=V(f,x) , \quad \{p_3\}=V(f,y), \quad 
\{p_0\}=V(f,z) \subset \PP(4n+3,2n+1,2).
\]
Let $H_i\subset X$ be the proper transform of the curve in $\spec A$ corresponding to the cone over the point $p_i$. 
Clearly, $H=H_0$, $H_i\cap E_{2n}\ne \emptyset$ and $H_i\cap E_{2n-1}=\emptyset$ for $i=1,2,3$.
Since $\deg(y)=2n+1$ and $H_3\equiv -(H_3E_{2n})C_{2n}$,  we have  $H_3E_{2n}=1$ and $\di_X(y)=C_{2n}+H_3$.
Similarly, we have $\di_X(x)=2C_{2n}+Z_0+H_0+H_1+H_2$.
Thus, for $1\le j \le n$, $\gcd((y)_E, (z^{j})_E)=\gcd(C_{2n},2j Z_0)=C_{2j-1}$.
Moreover, $y$ (resp. $z^j$) generates $\cO_X(-C_{2j-1})$ at every point of $\sum_{i=0}^{2j-1} E_i$ (resp. $\supp(Z_{2j})\setminus E_{2j-1}$).
In particular, $2j-1\in \cA_f$.
Let $I_j=I_{C_{2j-1}}$.
Since $p_g(A)=n+1$, it follows from \remref{r:Af} that $\cA_f=\defset{2j-1}{1 \le j \le n} \cup \{2n\}$ and $\ell_A\left(A/ I_j \right) = j$.
As noted in the proof of \thmref{t:ellGG}, the ideals in $\cal G$ correspond to the cycles $C_i$ such that $\cO_X(-C_i)$ is generated. Hence $\cal G=\defset{I_j}{1 \le j \le n}$.
Since  $(x, y, z^{j}) \subset I_j$ and $\dim _k k[x,y,z]/(f,x, y, z^{j}) =j=\ell_A\left(A/ I_j \right)$, we have $I_j=(x,y,z^{j})$.
\end{ex}

\par
By \corref{c:MaxEllG}, \corref{c:ellGG2} and the argument in \exref{e:NoMaxEll}, we have the following.

\begin{ex}\label{e:BrEll}
Let $R=k[x,y,z]_{(x,y,z)}$ and let $m \ge 0$ be an integer. 
In each of the following cases, $A$ is a maximally elliptic singularity with $p_g(A)=m+1$, $\cA_f=\{0,1, \dots, m\}$, the minimally elliptic cycle $Z_m$ is an elliptic curve, and $Z_0 = E$. 
Let  $\cal G$ be the set of elliptic ideals $I \subset A$ such that $\overline{G}(I)$ is Gorenstein.
\begin{enumerate}
\item If we put $A=R/(x^2+y^3+z^{6(m+1)})$, then $-Z_m^2=-1$ and $\cal G=\emptyset$. 
The exceptional set $E$ consists of $Z_m$ and a chain of $(-2)$-curves $\sum_{j=0}^{m-1} E_j$ such that $Z_m E_{m-1} = E_{j}E_{j-1}= 1$ 
($1\le j \le m-1$) (cf. \figref{fig:23(12)}).

\item If we put $A=R/(x^2+y^4+z^{4(m+1)})$, then $-Z_m^2=-2$ and 
\[
\cal G=\defset{(x,y,z^i)}{1 \le i \le m+1}.
\]
The exceptional set $E$ is described in \exref{e:244} and  $(x,y,z^i) = I_{C_{i-1}}$.

\item If we put $A=R/(x^3+y^3+z^{3(m+1)})$, then $-Z_m^2=-3$ and 
\[
\cal G=\defset{(x,y,z^i)}{1 \le i \le m+1}.
\]
The exceptional set $E$ consists of an elliptic curve $Z_m$ and three chains of $(-2)$-curves $\sum_{j=0}^{m-1} E_j^{(s)}$ ($s=1,2,3$) and $(x,y,z^i) = I_{C_{i-1}}$ as above.
\end{enumerate}
\end{ex}

\begin{quest} \label{openQ}
Let $A$ be an excellent normal Gorenstein local domain of $\dim A=2$. 
Then is the set of elliptic ideals $I=I_Z$ for which $\overline{G}(I)$ is Gorenstein  a finite set?
\end{quest}

\begin{acknowledgement} 
 The authors thank the referee for careful reading of the paper and 
helpful comments.
\end{acknowledgement}



\end{document}